\documentclass{amsart}

\usepackage{amssymb,amsmath,amsthm,amsfonts}
\usepackage[mathscr]{euscript}
\usepackage{dsfont}
\usepackage{graphicx}
\usepackage{mathrsfs}
\usepackage{esint}
\usepackage{xcolor}

\renewcommand{\leq}{\leqslant}
\renewcommand{\le}{\leqslant}
\renewcommand{\geq}{\geqslant}




\usepackage{color}
\definecolor{citation}{rgb}{0.2,0.5,0.2}
\definecolor{formula}{rgb}{0.1,0.2,0.5}
\definecolor{url}{rgb}{0,0.2,0.7}
\usepackage[colorlinks=true,linkcolor=formula,urlcolor=url,citecolor=citation]{hyperref}
\allowdisplaybreaks[4]

\vfuzz3pt 
\hfuzz2pt 



\newtheorem{theorem}{Theorem}[section]
\newtheorem{corollary}[theorem]{Corollary}
\newtheorem{lemma}[theorem]{Lemma}
\newtheorem{prop}[theorem]{Proposition}

\theoremstyle{definition}
\newtheorem{defn}[theorem]{Definition}
\theoremstyle{remark}
\newtheorem{rem}[theorem]{Remark}
\theoremstyle{remark}
\newtheorem{ex}[theorem]{Example}
\numberwithin{equation}{section}



\def\R {\mathbb{R}}

\def\N {\mathbb{N}}
\def\S {\mathbb{S}}
\def\Z {\mathbb{Z}}

\def\o {\omega}

\def\eps{\varepsilon}

\newlength{\defbaselineskip}
\setlength{\defbaselineskip}{\baselineskip}
\newcommand{\setlinespacing}[1]
           {\setlength{\baselineskip}{#1 \defbaselineskip}}

\DeclareGraphicsExtensions{.eps,.pdf,.png}

\begin{document}

\title[]
{A Bourgain-Brezis-Mironescu \\
characterization of higher order \\
Besov-Nikol'skii spaces}

\author[Julien Brasseur]{Julien Brasseur}
\address[Julien Brasseur]{INRA Avignon, unit\'e BioSP and Aix-Marseille Univ, CNRS,
Centrale Marseille,
I2M, Marseille,
France}
\email{julien.brasseur@univ-amu.fr, julien.brasseur@inra.fr}

\begin{abstract}
We study a class of nonlocal functionals in the spirit of the recent characterization of the Sobolev spaces $W^{1,p}$ derived by Bourgain, Brezis and Mironescu. We show that it provides a common roof to the description of the $BV(\R^N)$, $W^{1,p}(\R^N)$, $B_{p,\infty}^s(\R^N)$ and $C^{0,1}(\R^N)$ scales and we obtain new equivalent characterizations for these spaces. We also establish a non-compactness result for sequences and new (non-)limiting embeddings between Lipschitz and Besov spaces which extend the previous known results.
\end{abstract}

\subjclass[2010]{46E35}

\keywords{Fractional spaces, higher order Besov spaces, Nikol'skii spaces, nonlocal functionals, limiting embeddings, non-compactness.}

\maketitle

\tableofcontents

\section{Introduction}

\subsection{A brief state of art}
Let $(\rho_\varepsilon)_{\varepsilon>0}\subset L^1(\mathbb{R}^N)$ be a sequence of mollifiers, i.e.
a sequence of functions satisfying
\begin{align}
\left\{
\begin{array}{l}
\rho_\varepsilon\geq0~~\text{in}~~\mathbb{R}^N~~\text{for any}~~\varepsilon>0, \vspace{4pt} \\
\displaystyle\int_{\mathbb{R}^N}\rho_\varepsilon(z)\mathrm{d}z=1~~\text{for any}~~\varepsilon>0, \vspace{3pt}\\
\displaystyle\lim_{\varepsilon\downarrow0}\int_{|z|\geq\delta}\rho_\varepsilon(z)\mathrm{d}z=0~~\text{for all}~~\delta>0.
\end{array}
\right. \label{molli}
\end{align}
Let $M\in\N^*$, $1\le p< \infty$ and $s\in (0,M]$. We are interested in the properties of functions $f\in L^p(\mathbb{R}^N)$, satisfying
\begin{equation}\label{ener-eps}
\int_{\mathbb{R}^N}\rho_{\eps}(h)\,\o\left(\int_{\mathbb{R}^N}\frac{|\Delta_h^Mf(x)|^p}{|h|^{sp}} \mathrm{d}x\right)\mathrm{d}h\leq C \quad \text{ as }~~ \varepsilon\downarrow0,
\end{equation}
where $\omega:\R_+\to\R_+$ is an increasing, concave function
and $\Delta_h^Mf(x)$ stands for the usual $M$-th order forward difference of $f$ given by
\begin{align}
\Delta_h^Mf(x):=\sum_{j=0}^M(-1)^{M-j}\binom{M}{j}f(x+hj), \quad x,h\in\R^N. \label{iterateddiff2}
\end{align}
The assumptions on $\o$ will be made precise later on. \\

Functionals of the type of \eqref{ener-eps} were initially introduced by Bourgain, Brezis and Mironescu \cite{BBM,Brezis} to obtain a new characterization of the Sobolev space  $W^{1,p}(\mathbb{R}^N)$. Namely, for $M=s=1$ and $\o(t)=t$,
\eqref{ener-eps} reads
\begin{align}
\int_{\mathbb{R}^N}\int_{\mathbb{R}^N}\rho_\varepsilon(h)\frac{|f(x+h)-f(x)|^p}{|h|^{p}}\mathrm{d}x\mathrm{d}h\leq C \quad \text{ as }~~\varepsilon\downarrow0, \label{hypo4}
\end{align}
and the result of Bourgain, Brezis and Mironescu states that, any $f\in L^p(\mathbb{R}^N)$ satisfying \eqref{hypo4}
belongs to the Sobolev space $W^{1,p}(\mathbb{R}^N)$ if $1<p<\infty$, or to $BV(\mathbb{R}^N)$ if $p=1$, provided $(\rho_\eps)_{\eps>0}$ is radial. More precisely, they have shown that
\begin{align}
\lim_{\eps\downarrow0}\int_{\mathbb{R}^N}\int_{\mathbb{R}^N}\rho_\varepsilon(h)\frac{|f(x+h)-f(x)|^p}{|h|^{p}}\mathrm{d}x\mathrm{d}h=K_{p,N}\|\nabla f\|_{L^p(\R^N)}^p,
\end{align}
where
$$ K_{p,N}:=\int_{\S^{N-1}}|\sigma\cdot e|^p\mathrm{d}\mathcal{H}^{N-1}(\sigma), \quad e\in\S^{N-1}. $$
As a result, they were able to establish the following limiting embedding
\begin{align}
\lim_{r\uparrow 1}\,(1-r)p\|f\|_{W^{r,p}(\R^N)}^p=K_{p,N}\|\nabla f\|_{L^p(\R^N)}^p. \label{BBMlimimb}
\end{align}
Since this original work, numerous new characterizations of the Sobolev spaces $W^{k,p}(\R^N)$ or $BV(\R^N)$  have been obtained \cite{Bojar,Borghol,Davila,Ferreira,Ponce,Ponce2,Spector} and various asymptotic formulas characterizing the Sobolev norms in terms of fractional norms have been derived \cite{Karadzhov,Kolyada,Mazya,Triebel2}. For instance, Maz'ya and Shaposhnikova \cite{Mazya} obtained the counterpart of \eqref{BBMlimimb} in the critical case $r\downarrow0$, that is
\begin{align}
\lim_{r\downarrow0}\, rp\|f\|_{W^{r,p}(\R^N)}^p=2\sigma_N\|f\|_{L^p(\R^N)}^p,
\end{align}
whenever $f\in\bigcup_{0<r<1} W^{r,p}(\R^N)$ and where $\sigma_N$ stands for the superficial measure of the unit sphere $\S^{N-1}$.

Also, let us mention the work of  Ponce \cite{Ponce2} who was the first to obtain a  characterization  of  the space $BV(\R^N)$ in terms of a class of functions in $L^1(\R^N)$ satisfying
\begin{align}
\int_{\R^N}\int_{\R^N}\rho_\eps(h)~\Omega\left(\frac{|f(x+h)-f(x)|}{|h|}\right)\mathrm{d}x\mathrm{d}h\leq C \quad \text{as } \varepsilon\downarrow0, \label{omo1}
\end{align}
under suitable growth assumptions on $\Omega\in C(\R_+,\R_+)$.

More recently, such type of characterizations were extended  by Borghol \cite{Borghol}, Bojarski, Ihnatsyeva, Kinnunen \cite{Bojar} and Ferreira, Kreisbeck and Ribeiro \cite{Ferreira}, who considered the  cases $1<p<\infty$ in higher order Sobolev spaces. Typically, in \cite{Ferreira} it is shown that the spaces $W^{k,p}(\R^N)$, with $p\in(1,\infty)$ and $k\in\N^\ast$, can be characterized by quantities of the type
\begin{align}
\int_{\R^N}\int_{\R^N} \rho_\eps(h)~\Omega\left(\frac{|\Delta_h^kf(x)|}{|h|^k}\right)\mathrm{d}x\mathrm{d}h, \label{omo2}
\end{align}
where $\Omega:\R_+\to\R_+$ is an increasing, convex function such that
\begin{align}
m_1t^p\leq \Omega(t)\leq m_2t^p,
\end{align}
for all $t\geq0$ and some positive constants $0<m_1<m_2$. \\

To our knowledge, very few is known in the case $0<s<M$. Nonetheless, recent works of Lamy and Mironescu \cite{LaMi} suggest a connection between expressions of the type of \eqref{ener-eps} and Besov spaces. In \cite{LaMi}, the authors prove the following
\begin{theorem}[Lamy, Mironescu, \cite{LaMi}]
Let $s>0$, $p,q\in[1,\infty]$ and let $(\rho_\eps)_{\eps>0}\subset L^1(\R^N)$ satisfying \eqref{molli} and such that
\begin{align}
\rho_\eps(h)=\eps^{-N}\rho\left(\frac{h}{\eps}\right) \qquad{\mbox{for some }} ~\rho\in L^1(\R^N). \label{molli3}
\end{align}
Then,
\begin{align}
\|f\|_{B_{p,q}^s(\R^N)}\lesssim\|f\|_{L^p(\R^N)}+\left\|\frac{1}{\eps^s}\|f\ast\rho_\eps-f\|_{L^p(\R^N)}\right\|_{L^q((0,1),\frac{\mathrm{d}\eps}{\eps})}. \label{BLaMi}
\end{align}
\end{theorem}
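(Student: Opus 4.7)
My plan is to compare the right-hand side of \eqref{BLaMi} with a standard mollifier-based characterization of the Besov space $B_{p,q}^s(\R^N)$. Fix once and for all a smooth, radial, nonnegative, compactly supported $\psi \in C_c^\infty(\R^N)$ with $\int_{\R^N}\psi = 1$, and set $\psi_t(x):=t^{-N}\psi(x/t)$. It is classical (see e.g.\ Triebel's \emph{Theory of Function Spaces II}) that, at least for $s\in(0,2)$,
\[
\|f\|_{B_{p,q}^s(\R^N)} \lesssim \|f\|_{L^p(\R^N)} + \left\|t^{-s}\|f\ast\psi_t - f\|_{L^p}\right\|_{L^q((0,1), dt/t)};
\]
for $s\ge2$ the analogue holds after replacing $f\ast\psi_t - f$ by higher-order mollifier differences such as $f - 2f\ast\psi_t + f\ast\psi_t\ast\psi_t$. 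Thus the task reduces to a scale-by-scale comparison of $\|f\ast\psi_t - f\|_{L^p}$ with $\|f\ast\rho_t - f\|_{L^p}$.

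Splitting
\[
f\ast\psi_t - f = (f\ast\rho_t - f) + f\ast(\psi_t - \rho_t),
\]
I would focus on the second summand. Because $\int(\psi_t-\rho_t)=0$ and both $\psi_t$, $\rho_t$ are concentrated at scale $t$, the kernel $\psi_t-\rho_t$ is a mean-zero, ``wavelet-type'' object at scale $t$. The key technical step is to express $\psi_t-\rho_t$ (or, more precisely, its action on $f$) as a scale-averaged combination of the kernels $\rho_s-\delta_0$ for $s$ in a neighbourhood of $t$. On the Fourier side this amounts to finding a kernel $m$ such that
\[
\widehat{\psi}(t\xi)-\widehat{\rho}(t\xi) = \int_0^\infty m(t,s,\xi)\,(\widehat{\rho}(s\xi)-1)\,\mathrm{d}s,
\]
which is plausible because $\widehat{\rho}(0)=1$ and $\widehat{\rho}$ is continuous (both automatic from $\rho\in L^1$ with unit mass), so $\widehat{\psi}-\widehat{\rho}$ vanishes at the origin at a controllable rate.

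Granted such a decomposition, Minkowski's inequality in the scale variable together with a Hardy-type inequality would yield
\[
\left\|t^{-s}\|f\ast(\psi_t - \rho_t)\|_{L^p}\right\|_{L^q(dt/t)} \lesssim \left\|t^{-s}\|f\ast\rho_t - f\|_{L^p}\right\|_{L^q(dt/t)} + \|f\|_{L^p(\R^N)},
\]
and combining this with the Besov characterization above completes the proof of \eqref{BLaMi}. I expect the principal obstacle to lie in constructing the scale decomposition $m(t,s,\xi)$ uniformly in $\rho$ and in showing that the resulting bounds depend on $\rho$ only through $\|\rho\|_{L^1}$ and its unit-mass normalization; this is the delicate point, and it may require either a direct Fourier computation or a clever convolution identity relating $\psi_t$ to iterated convolutions of $\rho_s$ at neighbouring scales.
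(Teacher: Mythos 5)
Your high-level reduction --- pass to a known characterization of $B_{p,q}^s(\R^N)$ by a fixed smooth radial mollifier $\psi$ and then compare $\psi_t$ with $\rho_t$ --- is a legitimate starting point, but the step you defer (``granted such a decomposition\dots'') is not a routine verification: it is the entire content of the theorem, and in the form you propose it would fail. The only information available on $\rho$ is $\rho\geq0$, $\rho\in L^1(\R^N)$, $\int\rho=1$; hence $\widehat{\rho}$ is merely continuous and bounded, with no smoothness and no quantitative behaviour near the origin. A pointwise Fourier identity
\begin{align*}
\widehat{\psi}(t\xi)-\widehat{\rho}(t\xi)=\int_0^\infty m(t,s,\xi)\,\big(\widehat{\rho}(s\xi)-1\big)\,\mathrm{d}s
\end{align*}
is of no use for $p\neq2$ unless each $m(t,s,\cdot)$ is the Fourier transform of a finite measure with total variation uniformly controlled in $(t,s)$; only then can Minkowski's inequality in the scale variable be invoked. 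Constructing $m$ by ``dividing'' by $\widehat{\rho}(s\xi)-1$ yields a symbol that is merely continuous away from $\xi=0$, and continuity on an annulus does not make a function an $L^p$-multiplier. Worse, $|1-\widehat{\rho}|$ admits no lower bound on a fixed annulus that is uniform in $\rho$ (take $\rho$ concentrated near the origin), so even the $p=2$ case of your scheme requires an argument; and the uniformity ``in $\rho$ only through $\|\rho\|_{L^1}$'' that you aim for at the end is actually false --- the implicit constant in \eqref{BLaMi} genuinely depends on $\rho$, which the statement permits. The actual proof in \cite{LaMi} must exploit the positivity of $\rho$ (so that $\operatorname{Re}(1-\widehat{\rho})\geq0$) together with an averaging over the scales $\eps$ to extract a Tauberian-type lower bound; this is precisely the work your proposal leaves out.

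For comparison, the argument of \cite{LaMi} (whose adaptation to $q=\infty$ is sketched in the Appendix) does not pass through a fixed smooth mollifier at all: it uses the Littlewood--Paley blocks $f_j$ and the equivalent norm built from $2^{js}\|f_j\|_{L^p(\R^N)}$, and bounds each block by $\|f-f\ast\rho_{2^{-j}\eps}\|_{L^p(\R^N)}$ after averaging over $\eps\in(1/2,1)$. Your quantity $f\ast\psi_t-f$ plays exactly the role of such a block and carries exactly the same difficulty, so the proposal relocates the key lemma rather than proving it. To complete the argument you would need to establish the domination of a fixed, well-localized kernel at scale $t$ by the rough differences $f\ast\rho_s-f$ at comparable scales $s$, which is the nontrivial core of the cited theorem.
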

The converse of this holds under some additional moment condition on $\rho$ (see \cite{LaMi} for further details).
In fact, the case $q=\infty$ is not properly stated nor explicitly proven in \cite{LaMi}. To fill this gap, we shall give some additional details at the end of the paper. A consequence of this, which has not been noticed in \cite{LaMi}, is the following
\begin{prop}\label{nonextension}
Let $s\in(0,1)$, $p\in[1,\infty)$ and $(\rho_\varepsilon)_{\varepsilon>0}\subset L^1(\R^N)$ satisfying \eqref{molli} and \eqref{molli3}.
Then, the following are equivalent:
\begin{enumerate}
\item[(i)] $f\in B_{p,\infty}^s(\mathbb{R}^N)$,
\item[(ii)] $f\in L^p(\mathbb{R}^N)$ satisfies
\begin{align}
\int_{\mathbb{R}^N}\int_{\mathbb{R}^N}\rho_\varepsilon(h)\frac{|f(x+h)-f(x)|^p}{|h|^{sp}}\mathrm{d}x\mathrm{d}h\leq C \qquad{\mbox{as }} ~\eps\downarrow0. \label{kpp}
\end{align}
\end{enumerate}
Moreover,
\begin{align}
\|f\|_{B_{p,\infty}^s(\mathbb{R}^N)}^p\sim \|f\|_{L^p(\R^N)}^p+\sup_{\varepsilon\in(0,1)}\int_{\mathbb{R}^N}\int_{\mathbb{R}^N}\rho_\varepsilon(h)\frac{|f(x+h)-f(x)|^p}{|h|^{sp}}\mathrm{d}x\mathrm{d}h. \label{Kl2}
\end{align}
\end{prop}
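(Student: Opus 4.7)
The plan is to prove the two implications separately; the norm equivalence \eqref{Kl2} then follows from the quantitative bounds obtained along the way.

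\emph{Direction $(i)\Rightarrow(ii)$.} If $f \in B_{p,\infty}^s(\R^N)$ with $s \in (0,1)$, the standard first-difference characterization gives $\|f(\cdot + h) - f\|_{L^p(\R^N)}^p \lesssim |h|^{sp} \|f\|_{B_{p,\infty}^s}^p$ uniformly in $h \in \R^N$. Dividing by $|h|^{sp}$, multiplying by $\rho_\varepsilon(h) \geq 0$, integrating and using $\int\rho_\varepsilon = 1$ immediately yields \eqref{kpp} with $C \lesssim \|f\|_{B_{p,\infty}^s}^p$.

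\emph{Direction $(ii)\Rightarrow(i)$.} The plan is to invoke the Lamy--Mironescu inequality \eqref{BLaMi} in the case $q = \infty$ (whose validity is justified at the end of the paper), which reduces the problem to bounding
\[ \sup_{\varepsilon \in (0,1)} \varepsilon^{-s}\|f*\rho_\varepsilon - f\|_{L^p(\R^N)} \]
by a constant depending on $\|f\|_{L^p}$ and $\sup_\varepsilon I_\varepsilon$, where $I_\varepsilon$ denotes the double integral in \eqref{kpp}. Jensen's inequality applied to $t \mapsto |t|^p$ against the probability measure $\rho_\varepsilon(h)\,dh$ gives
\[ \|f*\rho_\varepsilon - f\|_{L^p}^p \leq \int_{\R^N}\rho_\varepsilon(h)\|f(\cdot-h)-f\|_{L^p}^p\,dh. \]
I would then split the integral at $|h| = \varepsilon$: on $\{|h| \leq \varepsilon\}$ the bound $|h|^{sp} \leq \varepsilon^{sp}$ yields a contribution $\leq \varepsilon^{sp} I_\varepsilon$, exactly what is needed after dividing by $\varepsilon^{sp}$; on $\{|h| > \varepsilon\}$ the crude estimate $\|f(\cdot-h) - f\|_{L^p} \leq 2\|f\|_{L^p}$ would leave a remainder of size $\|f\|_{L^p}^p \int_{|y|>1}\rho(y)\,dy$.

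\emph{Main obstacle.} The large-$|h|$ remainder, once divided by $\varepsilon^{sp}$, blows up as $\varepsilon \downarrow 0$ unless the splitting threshold is chosen with care. The hard part will be to split instead at $|h| = R\varepsilon$ with $R = R(\varepsilon) \to \infty$ slowly enough that $\int_{|y|>R}\rho(y)\,dy$ decays at a rate compatible with $\varepsilon^{sp}$, exploiting the mollifier decay from \eqref{molli}. Equivalently, one may first reduce to a compactly supported $\rho$ by truncation, controlling the truncation error via Young's inequality and the $L^1$-smallness of the discarded tail, then apply Hölder's inequality with the weight $|h|^s$ to obtain the desired bound $\varepsilon^{-s}\|f*\rho_\varepsilon - f\|_{L^p} \lesssim I_\varepsilon^{1/p}$ on the truncated piece. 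Combined with the easy direction, this gives the two-sided norm equivalence \eqref{Kl2}.
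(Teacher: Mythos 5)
Your direction $(i)\Rightarrow(ii)$ is fine, and your starting point for $(ii)\Rightarrow(i)$ (Lamy--Mironescu plus Jensen) is exactly the paper's. But you have correctly diagnosed the obstacle without giving a working fix, and both remedies you sketch fail for the same reason.

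Splitting at $|h|=R(\varepsilon)\varepsilon$: on $\{|h|\leq R\varepsilon\}$ the bound is $(R\varepsilon)^{sp}I_\varepsilon$, so after dividing by $\varepsilon^{sp}$ you get $R^{sp}I_\varepsilon$, which is only controlled if $R$ stays bounded. But then $\int_{|y|>R}\rho\,\mathrm{d}y$ does not decay with $\varepsilon$, and the outer piece divided by $\varepsilon^{sp}$ still blows up. Sending $R\to\infty$ fixes the outer piece but ruins the inner one; the two requirements are incompatible whenever $\rho$ has unbounded support. Your second fix has the same problem: the truncation error $\|f*\rho_\varepsilon-f*\tilde\rho_\varepsilon\|_{L^p}\leq\|f\|_{L^p}\|\rho-\tilde\rho\|_{L^1}$ is a fixed positive number independent of $\varepsilon$, so it cannot be absorbed after dividing by $\varepsilon^s$.

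The trick that closes the gap is not to estimate a tail or a truncation error at all, but to notice that hypothesis \eqref{kpp} is \emph{monotone in $\rho$}: if $0\leq\tilde\rho\leq\rho$ pointwise, then the double integral in \eqref{kpp} with $\tilde\rho_\varepsilon$ in place of $\rho_\varepsilon$ is no larger. Hence \eqref{kpp} for $\rho$ passes down (after renormalising) to $\tilde\rho:=\rho\,\mathds{1}_{B_1}$, and one may argue with this compactly supported, symmetrized mollifier from the start. For such a $\rho$ no splitting is needed: the support of $\rho_\varepsilon$ lies in $B_\varepsilon$, so $\varepsilon^{-sp}\leq|h|^{-sp}$ on the whole domain of integration, and Jensen's inequality gives directly
\[
\frac{\|\rho_\varepsilon*f-f\|_{L^p(\R^N)}^p}{\varepsilon^{sp}}\leq\int_{\R^N}\int_{B_\varepsilon}\rho_\varepsilon(h)\,\frac{|f(x+h)-f(x)|^p}{|h|^{sp}}\,\mathrm{d}h\,\mathrm{d}x,
\]
after which Theorem \ref{lami} concludes. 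So the missing idea is to truncate the \emph{mollifier} and use that the hypothesis descends under pointwise domination, not to estimate the resulting convolution error.
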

It is worth noticing that, by contrast with the representation of $B_{p,\infty}^s(\R^N)$ obtained in \cite{LaMi}, no moment condition on $\rho_\varepsilon$ is needed. Moreover, since $\rho_\eps$ does not need to be radial some directions may be privileged, yet with no impact on the resulting norm. This is in clear contrast with the case $s=1$ (see also \cite[Remark 10]{Chiron} or \cite[Corollary 3, p.232]{Ponce2}).

This sheds new lights on how to describe smoothness
and could be of potential interest in some problems of the calculus of variations and in the study of some integro-differential equations (see e.g. \cite{NDF,Aubert,BCV,Osher1,Osher2,Trageser}).

Also, in view of Theorem \ref{nonextension}, it is natural to ask for corresponding assertions of \eqref{omo1} and \eqref{omo2} in the framework of the fractional Besov-Nikol'skii spaces $B_{p,\infty}^s(\R^N)$. For example: what can be said about the limiting behavior of \eqref{kpp} when $\eps\downarrow0$ ? Can one describe higher order Besov-Nikol'skii spaces via expressions of the type \eqref{ener-eps} ? It is the main concern of this paper to deal with these issues.

\subsection{Main Motivation}
This work originates in a problem raised in \cite{BCV}. Consider the heterogeneous Fisher-KPP equation:
\begin{align}
\frac{1}{\varepsilon^m}\big(J_\varepsilon\ast u(x)-u(x)\big)+f(x,u)=0,~\,\,~u=u_\varepsilon,~\,\,~x\in\mathbb{R}^N,~\,\,~\eps>0, \label{KPP}
\end{align}
where $m\in[0,2]$, $u$ is the density of a given population, $J_\varepsilon(z):=\frac{1}{\varepsilon^N}J\big(\frac{z}{\varepsilon}\big)$, with $J\in C\cap L^1(\mathbb{R}^N)$ a symmetric positive dispersal kernel with unit mass and having finite $m$-th order moment, and $f\in C^{1,\alpha}(\R^{N+1})$ is a heterogeneous KPP type non-linearity, that is:
\begin{align}
\left\{
\begin{array}{l}
f(\cdot,0)=0, \\
\text{for all}~~x\in\R^N,~f(x,s)/s~\text{is decreasing with respect to}~s\in(0,\infty), \\
\text{there exists}~S(x)\in C(\R^N)\cap L^\infty(\R^N)~\text{such that}~f(\cdot,S(\cdot))\leq 0.
\end{array}
\right. \nonumber
\end{align}
For the sake of simplicity, we restrict our attention to non-linearities of the form
\[f(x,s)=s(a(x)-s), \quad \text{ with } \quad \limsup_{|x|\to\infty}\,a(x)<0.\]
Roughly speaking, $f$ models the growth rate of the population and $J$ the probability to jump from one location to another. The parameter $\eps$ is a measure of the spread of dispersal of the species. The scaling term $\frac{1}{\varepsilon^m}$ can be interpreted as the rate of dispersal of the species. It arises when considering a cost function (see \cite[Section 2]{BCV} for a more detailed explanation on the matter).
Consider for instance a tree reproducing and dispersing seeds.
Then, $\varepsilon\ll1$ represents a strategy where the dispersal rate is large but the seeds are spread over smaller distances, and $\varepsilon\gg1$ represents the opposite strategy (i.e. smaller dispersal rate but the seeds are spread over larger distances). As for
the parameter $m$, it measures the influence of the cost function on the different strategies.

Existence of positive solutions to \eqref{KPP} is naturally expected to provide a persistence criteria for the population under consideration. Nonetheless, if the asymptotic of a solution of \eqref{KPP} are quite well understood when $\varepsilon\to\infty$ (see \cite{BCV}), it is not the case when $\varepsilon\downarrow0$ and $0<m<2$. Berestycki et al. \cite{BCV} were able to prove the
\begin{theorem}[Berestycki, Coville, Vo, \cite{BCV}] \label{BCV1}
Assume $J$ is compactly supported with $J(0)>0$, $m\in(0,2)$, $\max\{a,0\}\not\equiv0$ and $a\in C^2(\R^N)$.

Then, when $\varepsilon\downarrow0$, the solution $u_\varepsilon$ of \eqref{KPP} converges almost everywhere to some non-negative bounded function $v$ satisfying
\begin{align}
v(x)\big(a(x)-v(x)\big)=0 \qquad{\mbox{in }} ~\mathbb{R}^N. \label{equationv}
\end{align}
\end{theorem}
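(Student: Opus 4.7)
The plan is to implement the standard three-step scheme for nonlocal singular limits: uniform estimates, passage to the limit via a weak formulation, and strong compactness for the quadratic term.

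\emph{Step 1 (uniform estimates).} Using the KPP structure of $f(x,s)=s(a(x)-s)$ together with the assumption $\limsup_{|x|\to\infty}a<0$, I would construct an $\varepsilon$-independent positive supersolution of the stationary equation --- for instance the constant $\|a_+\|_{L^\infty(\R^N)}$, or, to capture decay at infinity, an exponentially-decaying barrier adapted to $a_+$. The comparison principle for the nonlocal operator $\varepsilon^{-m}(J_\varepsilon\ast\cdot-\cdot)$, available thanks to $J\ge 0$, then yields $0\le u_\varepsilon(x)\le\|a_+\|_{L^\infty(\R^N)}$ on $\R^N$ together with tightness at infinity uniformly in $\varepsilon$.

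\emph{Step 2 (identification of the limit).} Given any weak-$\ast$ accumulation point $v\ge 0$ of $(u_\varepsilon)$ in $L^\infty(\R^N)$, I would test \eqref{KPP} against an arbitrary $\varphi\in C_c^\infty(\R^N)$ and use the symmetry of $J$ to transfer the nonlocal operator onto $\varphi$:
$$\int_{\R^N}u_\varepsilon\bigl(a-u_\varepsilon\bigr)\varphi\,\mathrm{d}x \;=\; \frac{1}{\varepsilon^m}\int_{\R^N}u_\varepsilon\bigl(J_\varepsilon\ast\varphi-\varphi\bigr)\mathrm{d}x.$$
Because $J$ is symmetric, compactly supported (hence with finite second moment), a second-order Taylor expansion of $\varphi$ gives $\|J_\varepsilon\ast\varphi-\varphi\|_{L^\infty(\R^N)}=O(\varepsilon^{2})$, so the right-hand side is of order $\varepsilon^{2-m}\to 0$ since $m<2$. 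To conclude $\int v(a-v)\varphi\,\mathrm{d}x=0$ for every such $\varphi$, and thus \eqref{equationv}, it then remains only to pass to the limit in the quadratic term $u_\varepsilon^2$ on the left-hand side.

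\emph{The main obstacle} is precisely this last compactness step, since weak-$\ast$ convergence alone is not enough to handle the nonlinearity. Testing \eqref{KPP} against $u_\varepsilon$ yields the dissipation identity
$$\frac{1}{2\varepsilon^m}\iint_{\R^{2N}}J_\varepsilon(x-y)\bigl|u_\varepsilon(x)-u_\varepsilon(y)\bigr|^2\mathrm{d}x\,\mathrm{d}y \;=\; \int_{\R^N}\bigl(a\,u_\varepsilon^2-u_\varepsilon^3\bigr)\mathrm{d}x \;=\; O(1),$$
but this only encodes $L^2$-smoothness of $u_\varepsilon$ at scale $\varepsilon$: a naive telescoping produces $\|\tau_h u_\varepsilon-u_\varepsilon\|_{L^2}^2\lesssim|h|^2\varepsilon^{m-2}$, which blows up as $\varepsilon\downarrow 0$ when $m<2$, so Kolmogorov--Riesz does not apply directly. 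To bypass this, I expect one must exploit the specific KPP structure, for instance via a Young-measure argument, a Lyapunov functional forcing $u_\varepsilon(a-u_\varepsilon)\to 0$ directly in $L^1_{\mathrm{loc}}(\R^N)$, or the construction of $v$ as the envelope of ordered sub- and supersolutions of the algebraic relation \eqref{equationv} (for which the KPP framework is particularly well suited), rather than relying on the dissipation estimate alone. Extracting a further subsequence then upgrades convergence to almost everywhere, as claimed.
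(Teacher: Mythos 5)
This statement is cited verbatim from \cite{BCV}; the present paper supplies no proof of it, so there is no in-paper argument to compare against, and your proposal must stand on its own. As written it is incomplete. You set up the weak formulation correctly, and the observation that symmetry and a finite second moment of $J$ make the nonlocal term vanish at rate $\varepsilon^{2-m}$ against a fixed $\varphi\in C_c^\infty(\R^N)$ is sound; likewise the derivation of the dissipation identity and the remark that it only controls oscillations of $u_\varepsilon$ at scale $\varepsilon$, so that Kolmogorov--Riesz does not apply, are correct. But at that point you stop: you list several candidate mechanisms (Young measures, a Lyapunov functional, ordered sub/supersolution envelopes) without carrying any of them out, so the central step --- establishing a.e.\ convergence and hence passing to the limit in the quadratic term --- is left as an acknowledged gap rather than proved.

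Two structural remarks worth making explicit. First, your framing inverts the logic: once a.e.\ convergence of $u_\varepsilon$ is known, the quadratic term is harmless, since the uniform $L^\infty$ bound and dominated convergence give $\int u_\varepsilon^2\varphi\to\int v^2\varphi$ immediately; thus a.e.\ convergence is a prerequisite for your Step~2, not an output of it. Second, the statement asserts convergence of the \emph{whole} family $u_\varepsilon$, while the accumulation-point argument at best produces a subsequence; since \eqref{equationv} has uncountably many solutions, you cannot remove the subsequence by uniqueness of the limit. In a KPP framework, the natural device that delivers both the pointwise limit and full-family convergence without any compactness theorem is a monotonicity argument --- e.g.\ showing that $\varepsilon\mapsto u_\varepsilon(x)$ is monotone via comparison, so that the pointwise limit exists automatically and the weak-formulation step then identifies it. Some structural input of that kind is precisely what you flag as missing but do not supply; until it is, the proposal does not prove the theorem.
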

Unfortunately, equation \eqref{equationv} admits infinitely many solutions, so it may happen that $v\equiv0$ (extinction) or that $v=a_+\mathds{1}_{K}$ for some compact $K\subset\mathrm{supp}(a_+)$ (persistence in a given area of the ecological niche). Whence, one cannot directly infer a persistence strategy for that case.

However, it is known that solutions to \eqref{KPP}, when they exist, satisfy
\begin{align}
\int_{\mathbb{R}^N}\int_{\mathbb{R}^N}\rho_\varepsilon(x-y)\frac{|u_\varepsilon(x)-u_\varepsilon(y)|^2}{|x-y|^m}\mathrm{d}x\mathrm{d}y\leq C \qquad{\mbox{for all }} ~\varepsilon>0, \label{BBM-KPP}
\end{align}
with $\rho_\varepsilon(z)=\eps^{-m}|z|^mJ_\eps(z)$ a smooth mollifier satisfying \eqref{molli} (see \cite[Lemma 5.1(ii)]{BCV} for a proof).

To quote Berestycki et al.: "\emph{If for the case $m=2$ we could rely on elliptic regularity and the new description of Sobolev Spaces developed in Bourgain et al. \cite{BBM}, Brezis \cite{Brezis}, Ponce \cite{Ponce,Ponce2} to get some compactness, this characterization does not allow us to treat the case $m<2$. We believe that a new characterization of fractional Sobolev spaces in the spirit of the work of Bourgain, Brezis and Mironescu \cite{BBM,Brezis} will be helful to resolve this issue.}"

This motivates the study of general classes of functions of the type of \eqref{ener-eps}, in particular the forthcoming Theorem \ref{THEO} and Theorem \ref{noncompactness}.

\subsection{Comments}
If \eqref{kpp} is very similar to \eqref{hypo4}, the underlying spaces, $W^{1,p}(\R^N)$ and $B_{p,\infty}^s(\R^N)$, are very different in nature and one has to cope with some technicalities. Among others, it is not clear anymore whether the limit of \eqref{kpp} as $\eps\downarrow0$ exists nor, even if it does, whether it provides an equivalent semi-norm. In the integer order case, things are not too controversial in the sense that
\begin{align}
\|\nabla f\|_{L^p(\R^N)}\sim\limsup_{|h|\to0}\frac{\|\Delta_h^1f\|_{L^p(\R^N)}}{|h|}=\sup_{h\ne0}\frac{\|\Delta_h^1f\|_{L^p(\R^N)}}{|h|}. \label{entier}
\end{align}
(see e.g. \cite{Triebel2} or Lemma \ref{limEsup}), while the counterpart of \eqref{entier} in the fractional case $s\in(0,1)$ is not true in general. Indeed, every nontrivial function $f\in C_c^\infty(\R^N)$ satisfies
\begin{align}
\lim_{|h|\to0}\frac{\|\Delta_h^1f\|_{L^p(\R^N)}}{|h|^s}=0<\sup_{h\ne0}\frac{\|\Delta_h^1f\|_{L^p(\R^N)}}{|h|^s}=[f]_{B_{p,\infty}^s(\R^N)}, \label{NBkk}
\end{align}
whenever $s\in(0,1)$, $p\in[1,\infty]$. Finiteness of either or both the two first terms in the left-hand side of \eqref{NBkk} equally describes $B_{p,\infty}^s(\R^N)$ in the sense that they define the same set of functions. But the respective (semi-)norms induced by these quantities are not equivalent (see Section \ref{sectionN}). For these reasons, at some places, it will be more convenient to state our results in terms of suprema as in \eqref{Kl2} instead of limits.

On the other hand, smooth functions are not dense in $B_{p,\infty}^s(\R^N)$, so that the arguments used in the integer case do not simply adapt. We show how to do this in a way that allows, not only to give a meaning, but also to handle the tricky case $p=\infty$ in both the integer and the fractional case, using only elementary arguments. Also, in the particular case where $\rho$ is radially symmetric, we improve \eqref{Kl2} to a semi-norm equivalence at all orders $s>0$. More general quantities are also investigated as well as compactness in the case of a sequence $(f_\eps)_{\eps>0}\subset L^p(\R^N)$.

At the end, this yields a common nonlocal description for the Besov-Nikol'skii spaces $B_{p,\infty}^s(\R^N)$, the H\"older-Zygmund spaces $\mathcal{C}^s(\R^N)$, the $BV(\R^N)$ space, the Sobolev spaces $W^{k,p}(\R^N)$ and the Lipschitz space $C^{0,1}(\R^N)$. As a by-product, we obtain new characterizations for these spaces and a new limiting embedding between Lipschitz and Besov spaces which extends the previous known results (see Theorem \ref{imbLip}). \\

\section{Main results}

\subsection{A new characterization of Besov-Nikol'skii spaces}
To state our results, we shall introduce some notations and terminology.
\begin{defn}\label{rsubad}
A function $\omega:\R_+\to\R_+$ is said to be \emph{roughly subadditive} if there exists a constant $A>0$ such that,
\[\omega(t_1+t_2)\leq A\left\{\omega(t_1)+\omega(t_2)\right\},\]
for every $t_1,t_2\in\R_+$. When $A=1$ we say that $\omega$ is \emph{subadditive}.
\end{defn}
To shorten our statements, it will be more convenient to call $C_{\mathrm{inc}}$ the set of all continuous, increasing functions $\omega:[0,\infty)\to[0,\infty)$ satisfying $\omega(0)=0$ and $\lim_{t\to\infty}~\omega(t)=\infty$. Also, we set
\begin{align}
C_{\mathrm{inc}}^+:=\{~\omega\in C_{\mathrm{inc}}~~\text{such that}~~\omega~~\text{is roughly subadditive }\}.
\end{align}
\begin{rem}\label{cinc}
Observe that if $\omega_1,\omega_2\in C_{\mathrm{inc}}^+$, then $\omega_1\circ\omega_2\in C_{\mathrm{inc}}^+$.
\end{rem}
Typical examples of functions in $C_{\mathrm{inc}}^+$ are:
\begin{enumerate}
\begin{minipage}[t]{0.5\linewidth}
\item[(i)] $\omega_1(t)=t^\alpha$ with $\alpha>0$,
\item[(ii)] $\omega_2(t)=\ln(1+t)$,
\end{minipage}
\begin{minipage}[t]{0.5\linewidth}
\item[(iii)] $\omega_3(t)=t~\mathrm{tanh}(t)$,
\item[(iv)] $\omega_4(t)=\mathrm{arsinh}(t)$, ...
\end{minipage}
\end{enumerate}
More generally, if $\omega\in C_{\mathrm{inc}}$ is concave, then $\omega\in C_{\mathrm{inc}}^+$ (see e.g. \cite[Theorem 5]{Bruckner}). As indicated by the example of $t^\alpha$ with $\alpha>1$, $C_{\mathrm{inc}}^+$ contains also some convex functions as long as they do not increase too fast. Indeed, a direct computation shows that if $\omega:[0,\infty)\to[0,\infty)$ is a continuous, convex function with $\omega(0)=0$ and if $\omega(2t)\leq \kappa~\omega(t)$, for all $t\geq0$ and some constant $\kappa>0$ (independent of $t$), then $\omega\in C_{\mathrm{inc}}^+$.

Our first result reads as follows
\begin{theorem}\label{THEO}
Let $M\in\N^\ast$, $s\in(0,M)$ and $p\in[1,\infty]$. Let $\omega\in C_{\mathrm{inc}}^+$ and $(\rho_\eps)_{\eps>0}\subset L^1(\R^N)$ be a sequence of radial functions satisfying \eqref{molli} and \eqref{molli3}.
Then, the following are equivalent:
\begin{enumerate}
\item[(i)] $f\in B_{p,\infty}^s(\R^N)$,
\item[(ii)] $f\in L^p(\R^N)$ is such that
\begin{align}
\int_{\R^N}\rho_\eps(h)~\omega\left(\frac{\|\Delta_h^Mf\|_{L^p(\R^N)}}{|h|^s}\right)\mathrm{d}h\leq C \qquad{\mbox{as }} ~\eps\downarrow0. \label{LAMBDA}
\end{align}
\end{enumerate}
Moreover,
\[\omega\left([f]_{B_{p,\infty}^s(\R^N)}\right)\sim\,\sup_{\eps>0}\,\int_{\R^N}\rho_\eps(h)~\omega\left(\frac{\|\Delta_h^Mf\|_{L^p(\R^N)}}{|h|^s}\right)\mathrm{d}h.\]
\end{theorem}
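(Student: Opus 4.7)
The direction (i) $\Rightarrow$ (ii), together with the upper bound on $\sup_\eps\int \rho_\eps\,\omega(\cdots)\,\mathrm{d}h$, is the routine one. Using the characterization $[f]_{B^s_{p,\infty}(\R^N)}=\sup_{h\ne 0}|h|^{-s}\|\Delta^M_h f\|_{L^p(\R^N)}$, valid whenever $M>s$ (cf.\ Lemma \ref{limEsup} or the standard references), we have $\omega(\|\Delta^M_h f\|_{L^p}/|h|^s)\le \omega([f]_{B^s_{p,\infty}})$ pointwise in $h$, and integrating against the probability density $\rho_\eps$ preserves this bound uniformly in $\eps$.

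For the converse (ii) $\Rightarrow$ (i), the first step is to rescale: using $\rho_\eps(h)=\eps^{-N}\rho(h/\eps)$ and the substitution $h=\eps y$, the hypothesis becomes
\[
\sup_{\eps>0}\int_{\R^N}\rho(y)\,\omega\!\left(\frac{\|\Delta^M_{\eps y}f\|_{L^p(\R^N)}}{(\eps|y|)^s}\right)\mathrm{d}y \,\le\, C.
\]
Since $\rho$ is radial, nonnegative and of unit mass, there exists an annulus $\{r_1\le|y|\le r_2\}\subset\R^N$ on which $\rho(y)\ge c_0>0$; writing the integral in polar coordinates and invoking a mean-value argument in the radial variable yields, for each $\eps>0$, some $r_\eps\in[r_1,r_2]$ satisfying $\int_{\S^{N-1}}\omega(\|\Delta^M_{\eps r_\eps\sigma}f\|_{L^p}/(\eps r_\eps)^s)\,\mathrm{d}\mathcal{H}^{N-1}(\sigma)\lesssim 1$. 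As $\eps$ ranges over $(0,\infty)$, the product $t=\eps r_\eps$ sweeps an arbitrarily wide range of positive values, so we obtain a uniform bound on spherical averages at every scale.

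The core of the proof is then to pass from these spherical-average bounds to pointwise bounds of the form $\omega(\|\Delta^M_{h_0}f\|_{L^p}/|h_0|^s)\lesssim 1$ for arbitrary $h_0\ne 0$. For this I would exploit the telescoping identity
\[
\Delta^M_{h_1}f(x)-\Delta^M_{h_2}f(x) \,=\, \sum_{j=1}^{M}(-1)^{M-j}\binom{M}{j}\bigl(f(x+jh_1)-f(x+jh_2)\bigr),
\]
combined with the identity $f(x+jh_1)-f(x+jh_2)=\Delta^1_{j(h_1-h_2)}f(x+jh_2)$ and translation invariance of $\|\cdot\|_{L^p}$, in order to compare $\|\Delta^M_{h_0}f\|_{L^p}$ with $\|\Delta^M_{t\sigma}f\|_{L^p}$ for $\sigma$ close to $h_0/|h_0|$ on the sphere of radius $t=|h_0|$. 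The rough subadditivity of $\omega\in C_{\mathrm{inc}}^+$, $\omega(a+b)\le A(\omega(a)+\omega(b))$, then dominates $\omega$ applied to $\|\Delta^M_{h_0}f\|_{L^p}/|h_0|^s$ by a controlled number of $\omega$-values at spherical neighbours, plus harmless lower-order error terms involving first-order differences on shorter scales. Taking the supremum over $h_0$ yields $\omega([f]_{B^s_{p,\infty}(\R^N)})\lesssim \sup_\eps\int\rho_\eps\,\omega(\cdots)\,\mathrm{d}h$, as desired.

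The main obstacle is precisely the spherical-average-to-supremum passage: the radial assumption on $\rho$ naturally produces only angle-averaged information, whereas $[f]_{B^s_{p,\infty}}$ is a genuine supremum in $h$. Bridging that gap is exactly what the hypothesis $\omega\in C_{\mathrm{inc}}^+$ (rather than merely monotone) is engineered for, and it requires a careful quantitative use of the $M$-th difference identity so that the error incurred when moving the offset within a spherical neighbourhood is absorbed. A secondary technical difficulty is the case $p=\infty$, in which Fubini-type manipulations are unavailable; there one must instead handle $\|\cdot\|_{L^\infty}$ directly via pointwise suprema and translates, essentially in the spirit of the discussion around \eqref{entier}--\eqref{NBkk}.
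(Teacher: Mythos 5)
Your direction (i)~$\Rightarrow$~(ii) is fine and matches the easy half (Proposition~\ref{cotefacile}). The problem is with (ii)~$\Rightarrow$~(i), where you have two intertwined gaps.

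First, your telescoping identity
\[
\Delta^M_{h_1}f(x)-\Delta^M_{h_2}f(x) \,=\, \sum_{j=1}^{M}(-1)^{M-j}\binom{M}{j}\Delta^1_{j(h_1-h_2)}f(x+jh_2)
\]
turns a comparison between two $M$-th order differences into error terms that are \emph{first-order} differences of $f$. These are \emph{not} controlled by the hypothesis once $s\ge 1$: for $f\in B^s_{p,\infty}$ with $s>1$, $\|\Delta^1_k f\|_{L^p}$ behaves like $|k|$ rather than $|k|^s$, so $\|\Delta^1_{j(h_1-h_2)}f\|_{L^p}/|h_0|^s$ can blow up even when $|h_1-h_2|\ll |h_0|$. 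Nothing in assumption~\eqref{LAMBDA} lets you absorb such terms. The correct identity here is not yours but the one proved in Lemma~\ref{iteratedtrans}: for $h=h_1+h_2$ one has $\|\Delta^{2M}_h f\|_{L^p}\lesssim \|\Delta^M_{h_1}f\|_{L^p}+\|\Delta^M_{h_2}f\|_{L^p}$. That keeps the same difference order $M$ on the right-hand side (raising only the order on the left, which is then undone by Lemma~\ref{2Morder}), and this is precisely what makes the error terms commensurate with the hypothesis.

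Second, even granting a usable identity, your ``spherical-average-to-pointwise'' step is not justified. The Chebyshev step yields, at each scale $t$, control over $\omega(\|\Delta^M_{t\sigma}f\|_{L^p}/t^s)$ only for $\sigma$ in a subset of $\S^{N-1}$ of substantial measure, not for $\sigma$ near an arbitrary prescribed direction $h_0/|h_0|$. To make $|h_0-t\sigma|$ small (which you need to tame the error term) you would have to localize the average in a small cap around $h_0/|h_0|$, which the annulus lower bound on $\rho$ does not deliver. The paper's actual route avoids this: Claims A--B first reduce to a kernel $\rho$ that dominates a nonnegative \emph{radially nondecreasing} function $\Psi$ near the origin; the pointwise inequality
\[
\omega\!\left(\frac{\|\Delta^1_h f\|_{L^p}}{|h|^s}\right)\lesssim
\omega\!\left(\frac{\|\Delta^1_z f\|_{L^p}}{|z|^s}\right)+
\omega\!\left(\frac{\|\Delta^1_{z-h}f\|_{L^p}}{|z-h|^s}\right),
\qquad z\in B_{|h|}(h),
\]
is then multiplied by $\Psi_\eps(|z-h|)$ and integrated over the \emph{ball} $B_{|h|/2}(h)$ (not a sphere). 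The monotonicity of $\Psi$ gives $\Psi_\eps(|z-h|)\le\Psi_\eps(|z|)\le\rho_\eps(z)$ in that region, which is exactly what converts the volume average into a bound by the functional $\mathscr{D}_\omega(\rho_\eps,f)$, uniformly in direction. This is the mechanism your sketch is missing.

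Finally, your worry about $p=\infty$ is a red herring for the paper's argument: the quantity $\|\Delta^M_h f\|_{L^p}$ is treated as a scalar function of $h$ throughout, so no Fubini interchange is needed; $p=\infty$ only enters once, in the density step inside Lemma~\ref{iteratedtrans}, and is handled there distributionally.
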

\begin{rem}\label{rkhypo}
It is noteworthy that the assumptions of Theorem \ref{THEO} are somehow self-improving. For example, if $\omega\in C_{\mathrm{inc}}$ is such that
\begin{align}
\alpha_1~\underline{\omega}\leq\omega\leq \alpha_2~ \overline{\omega} \qquad{\mbox{a.e. in }} ~\R^N,
\end{align}
for some $\underline{\omega},~\overline{\omega}\in C_{\mathrm{inc}}^+$ and $\alpha_1,\alpha_2>0$, then $\omega$ still characterizes $B_{p,\infty}^s(\R^N)$. Note also that the Jensen inequality allows to extend this result to convex $\omega\in C_{\mathrm{inc}}$.

Moreover, the conclusion of Theorem \ref{THEO} still holds under the slightly weaker assumption that $(\rho_\eps)_{\eps>0}\subset L^1(\R^N)$ satisfies \eqref{molli} and \eqref{molli3} with $\rho\in L^1(\R^N)$ such that there exists a number $\delta>0$ and a nonnegative radial function $\varphi$ with $\rho\geq\varphi$ a.e. in $B_{\delta}$ and $\int_{B_\delta}\varphi>0$.

Also, when $1\leq p<\infty$, the fact that $\omega\in C_{\mathrm{inc}}^+$ allows one to replace \eqref{LAMBDA} by
\begin{align}
\int_{\R^N}\rho_\eps(h)~\omega\left(\int_{\R^N}\Omega\left(\frac{|\Delta_h^Mf(x)|}{|h|^s}\right)\mathrm{d}x\right)\mathrm{d}h, \label{LAMBDA2}
\end{align}
for any continuously increasing $\Omega:\R_+\to\R_+$ with $\Omega(0)=0$ and
\begin{align}
m_1t^p\leq \Omega(t)\leq m_2t^p, \label{OOmg1}
\end{align}
for all $t\geq0$ and some $0<m_1\leq m_2$.
\end{rem}
By the same token, we obtain the following counterpart for the Lipschitz space.
\begin{theorem}\label{THEO2}
Let $\omega\in C_{\mathrm{inc}}^+$ and $(\rho_\eps)_{\eps>0}\subset L^1(\R^N)$ be a sequence of radial functions satisfying \eqref{molli} and \eqref{molli3}. Then, the following are equivalent:
\begin{enumerate}
\item[(i)] $f\in C^{0,1}(\R^N)$,
\item[(ii)] $f\in L^\infty(\R^N)$ is such that
\begin{align}
\int_{\R^N}\rho_\eps(h)~\omega\left(\frac{\|f(\cdot+h)-f\|_{L^\infty(\R^N)}}{|h|}\right)\mathrm{d}h\leq C \qquad{\mbox{as }} ~\eps\downarrow0. \label{LAMBDA3}
\end{align}
\end{enumerate}
Moreover,
\[\omega\left([f]_{C^{0,1}(\R^N)}\right)\sim\limsup_{\eps\downarrow0}~\int_{\R^N}\rho_\eps(h)~\omega\left(\frac{\|f(\cdot+h)-f\|_{L^\infty(\R^N)}}{|h|}\right)\mathrm{d}h.\]
\end{theorem}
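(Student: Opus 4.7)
Set $\phi(h):=\|f(\cdot+h)-f\|_{L^\infty(\R^N)}/|h|$, so that $[f]_{C^{0,1}(\R^N)}=\sup_{h\neq 0}\phi(h)$ and the quantity of interest is $I_\eps(f):=\int_{\R^N}\rho_\eps(h)\,\omega(\phi(h))\,\mathrm{d}h$. The implication (i) $\Rightarrow$ (ii), together with the easy half $\limsup_{\eps\downarrow 0}I_\eps(f)\leq \omega([f]_{C^{0,1}})$, is immediate from monotonicity of $\omega$ and $\int\rho_\eps=1$: if $f\in C^{0,1}(\R^N)$ then $\phi(h)\leq [f]_{C^{0,1}}$ for all $h\neq 0$, whence $I_\eps(f)\leq \omega([f]_{C^{0,1}})$ for every $\eps>0$.

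For (ii) $\Rightarrow$ (i) and the reverse bound, I would argue by smoothing. Fix a standard smooth mollifier $\eta_\delta$, set $f_\delta:=f\ast\eta_\delta$; Young's inequality gives $\phi_{f_\delta}\leq\phi_f$ and hence $I_\eps(f_\delta)\leq I_\eps(f)$. Since $f_\delta\in C^\infty\cap L^\infty$ has a bounded and uniformly continuous gradient, a mean-value argument shows that for each $z\in\R^N\setminus\{0\}$
\[\lim_{\eps\downarrow 0}\phi_{f_\delta}(\eps z)=G_\delta(z/|z|),\qquad G_\delta(\sigma):=\sup_{x\in\R^N}|\nabla f_\delta(x)\cdot\sigma|.\]
Substituting $h=\eps z$ with the scaling \eqref{molli3} and applying dominated convergence with the integrable majorant $\omega(\|\nabla f_\delta\|_\infty)\,\rho(z)$ then yields
\[\lim_{\eps\downarrow 0}I_\eps(f_\delta)=\int_{\R^N}\rho(z)\,\omega(G_\delta(z/|z|))\,\mathrm{d}z=\frac{1}{|\S^{N-1}|}\int_{\S^{N-1}}\omega(G_\delta(\sigma))\,\mathrm{d}\mathcal{H}^{N-1}(\sigma),\]
the last equality using the radiality of $\rho$ in polar coordinates together with $\int\rho=1$.

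It remains to recover $\omega([f_\delta]_{C^{0,1}})=\omega(\|\nabla f_\delta\|_\infty)$ from this spherical average. Being a supremum of linear forms in $\sigma$, $G_\delta$ is $\|\nabla f_\delta\|_\infty$-Lipschitz on $\S^{N-1}$; if $\sigma_0\in\S^{N-1}$ maximizes $G_\delta$ at the value $M_\delta:=\|\nabla f_\delta\|_\infty$, then $G_\delta\geq M_\delta/2$ on a spherical cap around $\sigma_0$ of dimensional measure $c_N>0$. Combining with the rough subadditivity in the form $\omega(M_\delta)\leq 2A\,\omega(M_\delta/2)$ yields
\[\frac{1}{|\S^{N-1}|}\int_{\S^{N-1}}\omega(G_\delta)\,\mathrm{d}\mathcal{H}^{N-1}\geq \frac{c_N}{2A\,|\S^{N-1}|}\,\omega([f_\delta]_{C^{0,1}}).\]
Chaining the estimates gives $\omega([f_\delta]_{C^{0,1}})\leq c(A,N)\,\limsup_{\eps\downarrow 0}I_\eps(f)$ uniformly in $\delta$, so $\{f_\delta\}$ is uniformly Lipschitz; passing to the a.e.\ limit $\delta\to 0$ produces a Lipschitz representative of $f$ with the desired bound on $\omega([f]_{C^{0,1}})$.

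The crux of the argument is precisely this last extraction step: turning a spherical average of $\omega\circ G_\delta$ into a bound on $\omega$ evaluated at the supremum. Both structural hypotheses of the theorem enter here---radiality of $\rho$ ensures that the maximizing direction on $\S^{N-1}$ is visible to $\rho_\eps$ in the limit, while rough subadditivity of $\omega$ absorbs the descent from $M_\delta$ to $M_\delta/2$ at bounded multiplicative cost. Without either, the directional average could be arbitrarily smaller than the supremum under control; every other step (mollification, dominated convergence, passage $\delta\to 0$) is routine.
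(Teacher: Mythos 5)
Your proof is correct, but it follows a genuinely different route from the paper. The paper treats Theorem \ref{THEO2} as a corollary of the machinery already built for Theorem \ref{THEO}: after the reduction in Claims A--B, the subadditivity identity $\tau_z f-f=\tau_h f-f+\tau_h(\tau_{z-h}f-f)$ together with the comparison $\rho_\varepsilon\geq\Psi_\varepsilon$ yields the pointwise estimate
\[
\omega\!\left(\frac{\|\Delta_h^1 f\|_{L^\infty}}{|h|}\right)\lesssim\int_{\R^N}\rho_{\varepsilon(h)}(z)\,\omega\!\left(\frac{\|\Delta_z^1 f\|_{L^\infty}}{|z|}\right)\mathrm{d}z
\]
for all $h\neq 0$, and then Lemma \ref{limEsup} (a subadditive-function limit theorem) converts $\limsup_{|h|\to 0}$ into $\sup_{h\neq 0}=[f]_{C^{0,1}}$; no mollification is used at all. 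You instead revive the original Bourgain--Brezis--Mironescu strategy: regularize, compute $\lim_{\varepsilon\downarrow0}I_\varepsilon(f_\delta)$ exactly via Taylor expansion and dominated convergence as a spherical average of $\omega\circ G_\delta$, extract $\omega(\|\nabla f_\delta\|_\infty)$ from that average via the Lipschitz continuity of $G_\delta$ on $\S^{N-1}$, a spherical cap of fixed measure, and one use of rough subadditivity to absorb the factor $\tfrac12$, then pass $\delta\to 0$ using the uniform Lipschitz bound. Both approaches are sound; your argument exploits the fact that the Lipschitz seminorm interacts transparently with mollification (it decreases under convolution with a mass-one kernel and is computable in the limit), a feature that is precisely what fails in the Besov--Nikol'skii setting of Theorem \ref{THEO}, which is why the paper deliberately avoids smoothing there and then simply reuses the same comparison argument for $p=\infty$. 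The paper's method is more elementary (no dominated convergence, no second derivatives) and treats the whole scale $B^s_{p,\infty}$, $W^{1,p}$, $BV$, $C^{0,1}$ uniformly; yours is more self-contained and makes the role of radiality and rough subadditivity of $\omega$ geometrically transparent, but it does not extend to the fractional case. One small point you should make explicit: the bound $\omega([f_\delta]_{C^{0,1}})\leq L$ uniformly in $\delta$ yields $[f_\delta]_{C^{0,1}}\leq\omega^{-1}(L)$ because $\omega\in C_{\mathrm{inc}}$ is continuous, strictly increasing and unbounded, which is what lets you pass the Lipschitz constant to the a.e.\ limit.
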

In fact, our proof also allows to cover first order Sobolev spaces. For example, in view of \eqref{LAMBDA2}, we have the
\begin{theorem}\label{THEO3}
Let $1\leq p< \infty$, $(\omega,\Omega)\in C_{\mathrm{inc}}^+\times C_{\mathrm{inc}}$ with $\Omega$ satisfying \eqref{OOmg1} and $(\rho_\eps)_{\eps>0}\subset L^1(\R^N)$ be a sequence of radial functions satisfying \eqref{molli} and \eqref{molli3}.

Then, the following are equivalent:
\begin{enumerate}
\item[(i)] $f\in W^{1,p}(\R^N)$ $($resp. $f\in BV(\R^N)$ if $p=1)$,
\item[(ii)] $f\in L^p(\R^N)$ is such that
\begin{align}
\int_{\R^N}\rho_\eps(h)~\omega\left(\int_{\R^N}\Omega\left(\frac{|f(x+h)-f(x)|}{|h|}\right)\mathrm{d}x\right)\mathrm{d}h\leq C \qquad{\mbox{as }} ~\eps\downarrow0. \nonumber
\end{align}
\end{enumerate}
Moreover,
\begin{align}
\omega\left(\|\nabla f\|_{L^p(\R^N)}^p\right)\sim\limsup_{\eps\downarrow0}\int_{\R^N}\rho_\eps(h)~\omega\left(\int_{\R^N}\Omega\left(\frac{|f(x+h)-f(x)|}{|h|}\right)\mathrm{d}x\right)\mathrm{d}h. \label{sobolev}
\end{align}
\end{theorem}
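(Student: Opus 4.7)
The plan is to mirror the proof of Theorem \ref{THEO} at the endpoint exponents $s=M=1$, exploiting the fact that boundedness of $h\mapsto \|f(\cdot+h)-f\|_{L^p(\R^N)}/|h|$ characterizes $W^{1,p}(\R^N)$ for $1<p<\infty$ and $BV(\R^N)$ for $p=1$. The forward direction (i)$\Rightarrow$(ii) is straightforward: if $f\in W^{1,p}(\R^N)$ (resp.\ $BV(\R^N)$), the classical bound $\|f(\cdot+h)-f\|_{L^p(\R^N)}\le |h|\,\|\nabla f\|_{L^p(\R^N)}$ (resp.\ $|h|\,|Df|(\R^N)$) combined with the upper estimate $\Omega(t)\le m_2 t^p$ yields
\[
\int_{\R^N}\Omega\!\left(\frac{|f(x+h)-f(x)|}{|h|}\right)\mathrm{d}x\;\le\; m_2\,\|\nabla f\|_{L^p(\R^N)}^p
\]
uniformly in $h\neq 0$. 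Since $\omega\in C_{\mathrm{inc}}^+$, iterating Definition \ref{rsubad} produces a constant $K>0$ with $\omega(m_2 t)\le K\,\omega(t)$, and then monotonicity of $\omega$ together with $\int\rho_\eps=1$ gives (ii) and the upper half of the equivalence in \eqref{sobolev}.

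For the converse (ii)$\Rightarrow$(i), the plan is to reduce the hypothesis to a form amenable to the technique used in the proof of Theorem \ref{THEO}. From the lower bound $\Omega(t)\ge m_1 t^p$, the monotonicity of $\omega$, and the reversed relation $\omega(\lambda t)\ge c_\lambda\omega(t)$ (valid for each $\lambda>0$ and obtained by iterating $\omega(t)\le 2A\,\omega(t/2)$, which itself follows from Definition \ref{rsubad} with $t_1=t_2=t/2$), one deduces
\[
\int_{\R^N}\rho_\eps(h)\,\omega\!\left(\frac{\|\Delta_h^1 f\|_{L^p(\R^N)}^p}{|h|^p}\right)\mathrm{d}h\;\le\; C'
\]
as $\eps\downarrow 0$. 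Setting $\tilde\omega(t):=\omega(t^p)$, which still belongs to $C_{\mathrm{inc}}^+$ thanks to $(t_1+t_2)^p\le 2^{p-1}(t_1^p+t_2^p)$ and a further use of rough subadditivity, and $g(h):=\|\Delta_h^1 f\|_{L^p(\R^N)}/|h|$, this becomes exactly the scalar form $\int\rho_\eps(h)\tilde\omega(g(h))\,\mathrm{d}h\le C'$ that underlies the argument of Theorem \ref{THEO}.

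The hard part, and the main obstacle, is the passage from this uniform integral estimate to the pointwise bound $\sup_{h\neq 0}g(h)<\infty$, which we expect to carry out exactly as in Theorem \ref{THEO} adapted to $s=M=1$. The key observation is that, for any given $h_\ast\neq 0$, one may choose $\eps\simeq|h_\ast|$ so that, by radiality of $\rho$ and the shell-type lower bound recorded in Remark \ref{rkhypo}, $\rho_\eps\gtrsim|h_\ast|^{-N}$ on a ball about $h_\ast$; the continuity of $h\mapsto g(h)$ on compacts of $\R^N\setminus\{0\}$, ensured by the continuity of translation in $L^p$ for $p<\infty$, then propagates the value $g(h_\ast)$ to a neighborhood of fixed relative size, while the rough subadditivity of $\tilde\omega$ absorbs multiplicative constants. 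Taking the supremum yields $\tilde\omega\bigl(\sup_{h\ne 0}g(h)\bigr)\lesssim C'$, so that $\sup_{h\ne 0}\|f(\cdot+h)-f\|_{L^p(\R^N)}/|h|<\infty$. The classical characterization of $W^{1,p}(\R^N)$ for $1<p<\infty$ and of $BV(\R^N)$ for $p=1$ via boundedness of this quantity then delivers (i) together with the lower bound in \eqref{sobolev}; the case $p=1$ requires only substituting the $BV$ characterization for the Sobolev one in this final step, and a weak-$\ast$ compactness argument in the space of finite measures in place of weak compactness in reflexive $L^p$.
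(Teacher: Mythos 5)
Your overall route is the same as the paper's: use \eqref{OOmg1} and the rough subadditivity of $\omega$ to squeeze the inner integral between multiples of $\|\Delta_h^1f\|_{L^p(\R^N)}^p/|h|^p$, pass to $\tilde\omega=\omega\circ|\cdot|^p\in C_{\mathrm{inc}}^+$, run the machinery of Theorem \ref{THEO} at $s=M=1$ to convert the integral bound into $\sup_{h\ne0}\|\Delta_h^1f\|_{L^p(\R^N)}/|h|<\infty$, and finish with the classical difference-quotient characterization of $W^{1,p}(\R^N)$ and $BV(\R^N)$; the paper adds Lemma \ref{limEsup} and \eqref{nnnnbla} to identify the $\limsup$ with $\|\nabla f\|_{L^p(\R^N)}$, exactly as you do. The forward direction and the reduction to the scalar functional are correct as written.

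The problem is your description of the crucial step, the passage from $\int\rho_\eps(h)\,\tilde\omega(g(h))\,\mathrm{d}h\le C'$ to a pointwise bound on $g(h)=\|\Delta_h^1f\|_{L^p(\R^N)}/|h|$. Two things there do not survive scrutiny. First, for a general radial $\rho\in L^1(\R^N)$ with unit mass there is no $\eps\simeq|h_\ast|$ for which $\rho_\eps\gtrsim|h_\ast|^{-N}$ on a ball about $h_\ast$: $\rho$ may vanish on any prescribed annulus, and even have essential infimum zero on every annulus. The paper has to manufacture such a lower bound by averaging $\rho$ over dilations and clipping rescaled copies together (Claims A and B), producing a kernel in $\mathfrak{M}(\R^N)$ minorized near the origin by a \emph{nondecreasing} radial $\Psi$; Remark \ref{rkhypo} records a hypothesis, not a lemma you can invoke. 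Second, and more seriously, "continuity of $g$ propagates the value $g(h_\ast)$ to a neighborhood of fixed relative size" is not a valid mechanism: continuity of translation in $L^p$ gives no modulus that is uniform in $f$, so you cannot deduce $g(z)\gtrsim g(h_\ast)$ for all $z$ in a ball of radius $\sim|h_\ast|$. What actually makes the step work is the triangle inequality \eqref{estimationtrans}, which for $z\in B_{|h|/2}(h)$ gives the \emph{reverse} control $g(h)\le 2\{g(z)+g(z-h)\}$ as in \eqref{stab}; one then applies $\tilde\omega$, multiplies by $\Psi_\eps(|z-h|)\le\min\{\rho_\eps(z),\rho_\eps(z-h)\}$ (this is where radial monotonicity of $\Psi$ enters), and integrates in $z$ to reach \eqref{controle}. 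If you replace your continuity argument by this one, the proof closes; as stated, that step has a genuine gap.
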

Note that the limit superior in the right-hand side of \eqref{sobolev} may not necessarily coincide with the limit inferior, depending on the choices of $\omega$ and $\Omega$.
\begin{rem}
If $\omega(t)=t$ and $\Omega$ is convex, then the corresponding assertion still holds in higher order Sobolev spaces, see \cite{Ferreira} for a proof.
\end{rem}
Here are some straightforward consequences of Theorem \ref{THEO}.
\begin{ex}
Let $M\in\N^\ast$, $s\in(0,M)$ and $J\in L^1(\R^N)$ be a radial function such that
\[\mathcal{J}:=\int_{\R^N}J(z)|z|^{sq}\mathrm{d}h<\infty \qquad{\mbox{for some }} ~1\leq q<\infty.\]
Then, choosing
$$\rho_\eps(h)=\frac{1}{\mathcal{J}}\frac{|h|^{sq}}{\eps^{sq}}J_\eps(h),$$
and $\o(t)=t^q$ we obtain
\begin{align}
[f]_{B_{p,\infty}^s(\R^N)}\sim\,\sup_{\eps>0}~\left(\frac{1}{\eps^{sq}}\int_{\R^N}J_\eps(h)\|\Delta_h^Mf\|_{L^p(\R^N)}^q\mathrm{d}h\right)^{1/q}. \label{KPPkernel}
\end{align}
\end{ex}
\begin{rem}
Notice that the quantity \eqref{BBM-KPP} appearing in the study of the nonlocal Fisher-KPP equation \eqref{KPP} can be seen as a particular case of \eqref{KPPkernel}.
\end{rem}
Other choices of $\rho_\eps$ highlight interesting links with the more classical Besov spaces $B_{p,q}^s(\R^N)$ with $1\leq q<\infty$ (see Definition \ref{DEFBESOV} on Section \ref{ntn} for the definition of these spaces).
\begin{ex}\label{sobolN}
Given $1\leq q<\infty$, the choice $\o(t)=t^q$ and
\begin{align}
\rho_\eps(h)=\frac{1}{C|h|^N}\,\mathds{1}_{(\eps,2\eps)}(|h|), \label{choice2}
\end{align}
where $C=\sigma_N\ln(2)$, yields
\begin{align}
[f]_{B_{p,\infty}^s(\R^N)}\sim\, \sup_{\eps>0}\left(\int_{\eps<|h|<2\eps}\frac{\|\Delta_h^Mf\|_{L^p(\R^N)}^q}{|h|^{N+sq}}\mathrm{d}h\right)^{1/q}. \nonumber \nonumber
\end{align}
\end{ex}
\begin{ex}\label{imbnikol}
Given $1\leq q<\infty$, the choice $\o(t)=t^q$ and
$$\rho_\eps(h)=\frac{1}{\sigma_N\eps^{(s-r)q}}\frac{(s-r)q}{|h|^{N-(s-r) q}}\,\mathds{1}_{(0,\eps)}(|h|),$$
for some $r\in(0,s)$, gives
\begin{align}
q^{-1/q}[f]_{B_{p,\infty}^s(\R^N)}\sim\sup_{\eps>0}~\frac{(s-r)^{1/q}}{\eps^{s-r}}\left(\int_{|h|<\eps}\frac{\|\Delta_h^Mf\|_{L^p(\R^N)}^q}{|h|^{N+rq}}\mathrm{d}h\right)^{1/q}.
\end{align}
\end{ex} ~~

\subsection{Limits of Besov norms}

Following the original result of Bourgain, Brezis and Mironescu in \cite{BBM}; Karadzhov, Milman, Xiao \cite{Karadzhov} and Triebel \cite{Triebel2} proved the following limiting embedding
\begin{align}
q^{-1/q}\|\nabla f\|_{L^p(\R^N)}\sim \lim_{r\uparrow1}~(1-r)^{1/q}[f]_{B_{p,q}^r(\R^N)}, \quad \text{ for }~~1<p,q<\infty. \label{TriebBesov}
\end{align}
See e.g. \cite{Triebel2} where higher order derivatives are also studied.

The counterpart of Example \ref{imbnikol} for the Lipschitz space leads one to ask wether \eqref{TriebBesov} still holds in the critical case $p=\infty$ (recall $W^{1,\infty}(\R^N)$ is the same as $C^{0,1}(\R^N)$). However, because of the restriction to \eqref{molli3} in Theorem \ref{THEO2} one cannot directly infer that this is the case. In addition, spaces of the type $W^{1,\infty}(\R^N)$ or $B_{\infty,q}^s(\R^N)$ do not admit nice spaces such as $C_c^\infty(\R^N)$ as dense subset (they are not even separable) and they inherit from the "bad" properties of $L^\infty(\R^N)$. This makes the validity of \eqref{TriebBesov} in the case $p=\infty$ rather unclear.

We prove that a weaker version of \eqref{TriebBesov} still holds when $p=\infty$.
\begin{theorem}\label{imbLip}
Let $q\in[1,\infty)$ and assume $f\in L^\infty(\R^N)$ is such that
\begin{align}
\limsup_{r\uparrow1}~(1-r)^{1/q}\|f\|_{B_{\infty,q}^r(\R^N)}<\infty. \label{limSUP0}
\end{align}
Then, $f\in C^{0,1}(\R^N)$. Moreover,
\begin{align}
q^{-1/q}[f]_{C^{0,1}(\R^N)}\sim\limsup_{r\uparrow1}~(1-r)^{1/q}\|f\|_{B_{\infty,q}^r(\R^N)}. \label{ekivBL}
\end{align}
\end{theorem}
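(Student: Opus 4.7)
The plan is to prove \eqref{ekivBL} by establishing the two matching inequalities.

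For the easy direction, assume $f\in C^{0,1}(\R^N)$ with $L_f:=[f]_{C^{0,1}(\R^N)}$. I split the Besov integral at $|h|=1$: on $\{|h|\leq 1\}$ the bound $\|\Delta_h^1 f\|_{L^\infty}\leq L_f|h|$ yields a contribution of at most $\sigma_N L_f^q/(q(1-r))$, while on $\{|h|>1\}$ the trivial bound $\|\Delta_h^1 f\|_{L^\infty}\leq 2\|f\|_{L^\infty}$ yields $\sigma_N(2\|f\|_{L^\infty})^q/(rq)$. Multiplying by $(1-r)$ and letting $r\uparrow 1$, the second piece and the contribution of the $\|f\|_{L^\infty}$ term to the Besov norm both vanish, producing $\limsup_{r\uparrow 1}(1-r)^{1/q}\|f\|_{B_{\infty,q}^r}\lesssim q^{-1/q}L_f$.

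For the reverse direction, set $L:=\limsup_{r\uparrow 1}(1-r)^{1/q}\|f\|_{B_{\infty,q}^r(\R^N)}<\infty$ and mollify: let $f_\delta := f\ast \eta_\delta$ for a standard smooth mollifier $\eta_\delta$ supported in $B(0,\delta)$. Since $\|\Delta_h^1 f_\delta\|_{L^\infty}\leq\|\Delta_h^1 f\|_{L^\infty}$, the Besov bound is inherited by $f_\delta$, and the goal is to establish the $\delta$-uniform estimate $\|\nabla f_\delta\|_{L^\infty}\leq C_N q^{1/q}L$; a Lipschitz representative of $f$ with the claimed constant then follows by letting $\delta\downarrow 0$ (using $f_\delta\to f$ a.e.). To prove the uniform bound, fix $\eps>0$ and choose $x_0\in\R^N$ with $|\nabla f_\delta(x_0)|\geq\|\nabla f_\delta\|_{L^\infty}-\eps$, and set $e_0 := \nabla f_\delta(x_0)/|\nabla f_\delta(x_0)|$. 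Since $f_\delta$ is smooth with bounded second derivatives, Taylor's theorem at $x_0$ yields, for all $h=t\sigma$ with $\sigma\in\S^{N-1}$, $\sigma\cdot e_0\geq 1/2$ and $0<t\leq \tau_\delta := |\nabla f_\delta(x_0)|/(2\|D^2 f_\delta\|_{L^\infty})$,
\[
|f_\delta(x_0+h)-f_\delta(x_0)|\geq \tfrac{1}{4}|\nabla f_\delta(x_0)|\,|h|,
\]
hence $\|\Delta_h^1 f_\delta\|_{L^\infty}\geq \tfrac{1}{4}|\nabla f_\delta(x_0)|\,|h|$ on the cone $\mathcal{C} := \{t\sigma : 0<t\leq \tau_\delta,\ \sigma\in\S^{N-1},\ \sigma\cdot e_0\geq 1/2\}$. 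Restricting the Besov integral to $\mathcal{C}$ and computing in polar coordinates,
\[
[f_\delta]_{B_{\infty,q}^r}^q \;\geq\; \frac{\alpha_N\,|\nabla f_\delta(x_0)|^q\,\tau_\delta^{q(1-r)}}{4^q\,q(1-r)},
\]
where $\alpha_N$ is the area of the spherical cap $\{\sigma\in\S^{N-1}:\sigma\cdot e_0\geq 1/2\}$. Multiplying by $(1-r)$ and letting $r\uparrow 1$ (so that $\tau_\delta^{q(1-r)}\to 1$) yields $L^q\geq \alpha_N|\nabla f_\delta(x_0)|^q/(4^q q)$. Sending $\eps\downarrow 0$ furnishes $\|\nabla f_\delta\|_{L^\infty}\leq 4q^{1/q}L/\alpha_N^{1/q}$ uniformly in $\delta$, and the conclusion follows.

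The main obstacle is that, for $p=\infty$, $h\mapsto\|\Delta_h^1 f\|_{L^\infty}$ need not be continuous when $f\in L^\infty$ is only measurable, so one cannot directly localize the Besov integral around a prescribed increment. Mollification circumvents this by replacing $f$ with a smooth approximation and using a Taylor expansion at a near-maximizer of $|\nabla f_\delta|$; the key point for uniformity is that the threshold $\tau_\delta$ (which involves $\|D^2 f_\delta\|_{L^\infty}$ and hence blows up with $\delta$) enters the lower bound only through the factor $\tau_\delta^{q(1-r)}$, which tends to $1$ as $r\uparrow 1$ regardless of $\tau_\delta>0$, leaving a purely dimensional constant in the end.
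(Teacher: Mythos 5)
Your argument is correct, and it takes a genuinely different route from the paper. The paper deduces Theorem \ref{imbLip} by combining the mollifier characterization of $C^{0,1}(\R^N)$ already established in Theorem \ref{THEO2} (which rests on Lemma \ref{limEsup}, the limit theorem for subadditive functions) with the Bourgain--Brezis--Mironescu Chebyshev-type estimate of Lemma \ref{chebychev} and Jensen's inequality, passing through a comparison between the "tent" mollifier $\eta_\eps(h)=\eps^{-N}C_2\frac{|h|}{\eps}\mathds{1}_{B_\eps}(h)$ and the singular radial kernel $\rho_\eps(t)=|B_1|^{-1}\eps^{1-\eps}t^{\eps-N}\mathds{1}_{(0,\eps)}(t)$. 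You instead bypass all of that: you mollify $f$, note the pointwise monotonicity $\|\Delta_h^1 f_\delta\|_{L^\infty}\leq\|\Delta_h^1 f\|_{L^\infty}$, and extract the uniform gradient bound on $f_\delta$ from a cone-localized lower bound of the Besov integral via a second-order Taylor expansion at a near-maximizer of $|\nabla f_\delta|$, with the key observation that the $\delta$-dependent threshold $\tau_\delta$ enters only through $\tau_\delta^{q(1-r)}\to1$. This is more elementary and entirely self-contained, needing neither Theorem \ref{THEO2} nor Lemma \ref{chebychev}. It even carries a bonus: since your Taylor lower bound $(1-r)[f_\delta]_{B_{\infty,q}^r}^q\geq \alpha_N|\nabla f_\delta(x_0)|^q\tau_\delta^{q(1-r)}/(4^qq)$ holds for every $r<1$ before any limit is taken, you may equally well pass to the limit along any sequence $r_n\uparrow1$; combined with your (also correct) upper bound via splitting at $|h|=1$, your argument actually proves the stronger version of \eqref{limSUP0} and \eqref{ekivBL} with $\liminf$ in place of $\limsup$ -- precisely the improvement that the paper's remark after Theorem \ref{imbLip} leaves open (the paper's route is stuck with $\limsup$ because Theorem \ref{THEO2} is itself a $\limsup$ statement). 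Two small expository points: you should note that one may assume $\nabla f_\delta(x_0)\neq 0$ and $D^2 f_\delta\not\equiv 0$ (otherwise there is nothing to prove for that $\delta$), and the threshold $\tau_\delta$ in fact shrinks to $0$ rather than blowing up as $\delta\downarrow0$; neither affects the argument.
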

\begin{rem}
Due to the lack of continuity of the translations in $L^\infty(\R^N)$ it is not clear wether the $\limsup$ in \eqref{limSUP0} (resp. in \eqref{ekivBL}) can be replaced by a $\liminf$.
\end{rem}
The proof can be carried out using subadditivity and monotonicity arguments via an improvement of the Chebychev inequality due to Bourgain, Brezis and Mironescu \cite{BBM} together with Theorem \ref{THEO}.

However, in the fractional case, one lose the aforementioned monotonicity and the arguments fail. In view of Example \ref{imbnikol} and $\mathcal{C}^s(\R^N)=B_{\infty,\infty}^s(\R^N)$ it is natural to ask wether or not the counterpart holds for $B_{p,\infty}^s(\R^N)$.

Using subatomic decompositions we were able to show that this is not the case.
\begin{theorem}\label{nonlimit}
Let $s>0$, $p\in[1,\infty)$ and $q\in[1,\infty)$. Then, there exists a function $f$ belonging to $L^p(\R^N)$, satisfying
\begin{align}
\sup_{0<r<s}\,(s-r)^{1/q}\|f\|_{\mathbf{B}_{p,q}^{r}(\R^N)}<\infty,
\end{align}
but $f\notin B_{p,\infty}^s(\R^N)$.
\end{theorem}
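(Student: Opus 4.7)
The plan is to construct, via the subatomic (quark) / Littlewood-Paley description of Besov spaces, an explicit lacunary counterexample. Fix a nonzero Schwartz function $\psi$ on $\R^N$ whose Fourier transform is supported in the annulus $\{1/2\leq|\xi|\leq 2\}$. Set $j_k:=2^k$ for $k\geq 1$ (a doubly-lacunary sequence of frequencies) and define
\[
f(x):=\sum_{k=1}^\infty k\cdot 2^{j_k(N/p-s)}\,\psi(2^{j_k}x).
\]
Each summand has $L^p$-norm $k\,2^{-sj_k}\|\psi\|_{L^p}$, which decays super-exponentially in $k$, so $f\in L^p(\R^N)$ follows at once from the triangle inequality.

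Since the summands have pairwise disjoint Fourier supports aligned with a standard dyadic Littlewood-Paley partition of unity, the subatomic characterization of $\mathbf{B}_{p,q}^r(\R^N)$ collapses the Besov norm to a scalar sum,
\[
\|f\|_{\mathbf{B}_{p,q}^r(\R^N)}^q\;\sim\;\sum_{k=1}^\infty k^q\, 2^{-(s-r)j_k q},
\]
with implicit constants depending on $\psi,p,q,s$ and the lacunarity ratio, but not on $r\in(0,s)$. Setting $\beta:=(s-r)q$ and $K:=\lfloor\log_2(1/\beta)\rfloor$, one splits the doubly-lacunary sum at $k=K$: for $k\leq K$ the exponential factor lies in $[1/2,1]$ and contributes $\lesssim K^{q+1}$, while for $k>K$ the factor decays doubly-exponentially and contributes $\lesssim K^q$. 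Hence $\sum_k k^q 2^{-\beta 2^k}\lesssim (\log(1/(s-r)))^{q+1}$, so
\[
(s-r)^{1/q}\|f\|_{\mathbf{B}_{p,q}^r(\R^N)}\;\lesssim\;(s-r)^{1/q}\,|\log(s-r)|^{(q+1)/q}\;\longrightarrow\; 0\quad\text{as }r\uparrow s,
\]
which in particular yields $\sup_{0<r<s}(s-r)^{1/q}\|f\|_{\mathbf{B}_{p,q}^r(\R^N)}<\infty$. The very same Littlewood-Paley computation gives $\|f\|_{B_{p,\infty}^s(\R^N)}\sim\|\psi\|_{L^p}\sup_k k=+\infty$, so $f\notin B_{p,\infty}^s(\R^N)$.

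The main obstacle is to invoke the subatomic equivalence with constants \emph{uniform in $r\in(0,s)$}; otherwise a blow-up of those constants as $r\uparrow s$ could spoil the bound on $(s-r)^{1/q}\|f\|_{\mathbf{B}_{p,q}^r}$. However, because $f$ is already a lacunary sum aligned with dyadic shells, the norm computation reduces to a fixed Littlewood-Paley projection onto each annulus $\{|\xi|\sim 2^{j_k}\}$; the resulting constants depend only on $\psi,p,q,s$ and the lacunarity ratio, and are thus harmless throughout $r\in(0,s)$. Once this is granted, the remainder of the argument is the elementary tail estimate for $\sum_k k^q 2^{-\beta 2^k}$ outlined above, and a choice of weight $c_k=k$ that grows just fast enough to push $f$ out of $B_{p,\infty}^s$ while remaining tame on every $B_{p,q}^r$ with $r<s$.
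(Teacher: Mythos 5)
Your construction is a Fourier-side lacunary analogue of the paper's physical-side one, and the elementary estimates are correct: the sum $\sum_k k^q 2^{-\beta 2^k}$ is indeed $O(|\log\beta|^{q+1})$ (one can get the same conclusion more quickly as in Lemma~\ref{unboundedcesaro}, via $2^{-\beta 2^k}\leq 1/(e\beta 2^k\ln 2)$, which gives $\beta\sum_k k^q 2^{-\beta 2^k}\leq (e\ln 2)^{-1}\sum_k k^q2^{-k}$), the $L^p$-membership is clear, and the failure of $f\in B^s_{p,\infty}$ is immediate from the disjoint dyadic Fourier supports.

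The genuine gap is the one you flag as ``the main obstacle'' and then dismiss. The quantity in the statement is $\|f\|_{\mathbf{B}^r_{p,q}(\R^N)}$, i.e.\ the \emph{quarkonial} norm of Definition~\ref{DEFquarkonial}, which is an infimum over admissible representations of $f$ in terms of compactly supported quarks $(\beta\mathrm{qu})_{\nu,m}$. Your building block $\psi$ has Fourier support in an annulus and is therefore \emph{not} compactly supported in physical space, so your series is not itself a quarkonial representation of $f$; to bound $\|f\|_{\mathbf{B}^r_{p,q}}$ you must first transfer your Littlewood--Paley computation through the equivalence $\|f\|_{\mathbf{B}^r_{p,q}}\lesssim\|f\|_{B^r_{p,q}}$ of Theorem~\ref{THquark}, whose implicit constant a priori depends on $r$. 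Your justification (``the norm computation reduces to a fixed Littlewood--Paley projection onto each annulus'') only explains why the \emph{classical} norm factors cleanly; it says nothing about how $\psi(2^{j_k}\cdot)$ is rewritten as a superposition of quarks nor about how the $2^{\varrho|\beta|}$-weighted $\ell^\infty_\beta$-sup in \eqref{quarknorm} behaves as $r\uparrow s$ or $r\downarrow 0$. Without an explicit uniformity statement for this equivalence, the bound on $\sup_{0<r<s}(s-r)^{1/q}\|f\|_{\mathbf{B}^r_{p,q}}$ is not established.

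The paper's construction is designed precisely to avoid this: it takes $\psi\in C_c^\infty$ with $\sum_m\psi(\cdot-m)=1$ (Definition~\ref{quarks}), places the pieces at integer centres $2^jm_j\in\Z^N$, and observes that each term $2^{-j(s-\eps-N/p)}\psi(2^j x-2^jm_j)$ is \emph{literally} an $(s-\eps,p)$-$0$-quark. Since the quarkonial norm is an infimum, the single representation the paper writes down immediately yields $\eps\|f\|_{\mathbf{B}^{s-\eps}_{p,q}}^q\leq\eps\sum_j u_j 2^{-j\eps q}$, with no appeal to the equivalence theorem and therefore no $r$-dependent constants whatsoever. The price is that the terms no longer have disjoint Fourier supports, so the lower bound $f\notin B^s_{p,\infty}$ requires the physical-space separation argument of Lemma~\ref{abovv} rather than your one-line Littlewood--Paley observation. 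If you want to keep your Fourier-lacunary construction, you would need either to cite a quantitative version of the subatomic equivalence with constants uniform on compact $r$-intervals, or to carry out the quark decomposition of $\psi(2^{j_k}\cdot)$ by hand and track the $\beta$-weights; as written, that step is a claim, not a proof.
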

Here, "$\|f\|_{\mathbf{B}_{p,q}^{r}(\R^N)}$" stands for the $B_{p,q}^{r}(\R^N)$-norm of $f$ in the sense of subatomic decomposition theory (see Definition \ref{DEFquarkonial} below).

In particular, this suggests that the restriction to \eqref{molli3} in Theorems \ref{nonextension} and \ref{THEO} (and actually also in \eqref{BLaMi} when $q=\infty$) is not far from being optimal.

\subsection{A non-compactness result}

In the integer case $s=1$, it is known that any bounded sequence $(f_\eps)_{\eps>0}\subset L^p(\R^N)$ satisfying
\begin{align}
\int_{\R^N}\int_{\R^N}\rho_\eps(h)\frac{|f_\eps(x+h)-f_\eps(x)|^p}{|h|^{p}}\mathrm{d}x\mathrm{d}h\leq C \qquad{\mbox{as }} ~\eps\downarrow0,
\end{align}
must be relatively compact in $L_{loc}^p(\R^N)$ provided $(\rho_\eps)_{\eps>0}$ is a suitable sequence of mollifiers (e.g. nonincreasing if $N=1$ \cite{BBM} or radially symmetric if $N\geq2$ \cite{Ponce}).

Per contra, we show that this phenomenon does not extend to $s\in\R_+\setminus\N$, at least if $\rho_\eps$ exhibits a reasonable decay at infinity.
\begin{theorem}\label{noncompactness}
Let $M\in\N^\ast$, $s\in(0,M)$ and $p\in[1,\infty)$. Let $(\rho_\eps)_{\eps>0}$ be a sequence of mollifiers of the form \eqref{molli3} with $\rho\in L^1(\R^N)$ satisfying the moment condition
\begin{align}
\int_{\R^N}\rho(z)|z|^{p(M-s)}\mathrm{d}z<\infty.
\end{align}
Then, there exists a bounded sequence $(f_\eps)_{\eps>0}\subset L^p(\R^N)$ satisfying
\begin{align}
\int_{\R^N}\int_{\R^N}\rho_\eps(h)\frac{|\Delta_h^Mf_\eps(x)|^p}{|h|^{sp}}\mathrm{d}x\mathrm{d}h\leq C \qquad{\mbox{as }} ~\eps\downarrow0,
\end{align}
but which is not relatively compact in $L_{loc}^p(\R^N)$.
\end{theorem}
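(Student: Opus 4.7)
The plan is to construct an explicit oscillating sequence that captures the slack afforded by $s<M$. Fix a non-trivial $\phi\in C_c^\infty(\R^N)$ and a unit vector $e\in\S^{N-1}$, and set
\[
f_\eps(x) := \phi(x)\cos(\lambda_\eps\, x\cdot e), \qquad \lambda_\eps := \eps^{-(M-s)/M}.
\]
Since $M>s$, one has $\lambda_\eps\to +\infty$ as $\eps\downarrow 0$; the bound $\|f_\eps\|_{L^p(\R^N)}\leq\|\phi\|_{L^p(\R^N)}$ is immediate, so the family is uniformly bounded in $L^p(\R^N)$.

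For the failure of $L^p_{loc}$-compactness, I exploit the high-frequency modulation. When $p\in(1,\infty)$, the Riemann--Lebesgue lemma gives $f_\eps\rightharpoonup 0$ weakly in $L^p(\R^N)$, while a standard averaging computation shows
\[
\|f_\eps\|_{L^p(\mathrm{supp}(\phi))}^p \longrightarrow \Bigl(\tfrac{1}{\pi}\int_0^\pi|\cos t|^p\,dt\Bigr)\|\phi\|_{L^p(\R^N)}^p>0,
\]
ruling out any subsequence converging strongly to the weak limit $0$. In the borderline case $p=1$, I argue directly that the $L^1$-modulus of continuity stays bounded away from zero: with $h_\eps:=(\pi/\lambda_\eps)\,e\to 0$, the identity $\cos(\lambda_\eps x\cdot e+\pi)=-\cos(\lambda_\eps x\cdot e)$ together with the uniform continuity of $\phi$ yields $\|f_\eps(\cdot+h_\eps)-f_\eps\|_{L^1(\R^N)} \to c\|\phi\|_{L^1(\R^N)}$ for some $c>0$, violating the Kolmogorov--Riesz criterion.

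The core step is the verification of the nonlocal bound. Since $f_\eps\in C_c^\infty(\R^N)$ has support in a fixed compact set and satisfies $\|\nabla^M f_\eps\|_{L^\infty(\R^N)}\leq C_\phi(1+\lambda_\eps)^M$, the classical Taylor representation of the iterated difference yields the pointwise bound
\[
|\Delta_h^M f_\eps(x)|\leq C_\phi\,(\lambda_\eps|h|)^M, \qquad\text{for all }x,h\in\R^N \text{ and }\lambda_\eps\geq 1.
\]
Combined with the uniform control $|\mathrm{supp}(\Delta_h^M f_\eps)|\leq (M+1)|\mathrm{supp}(\phi)|$, this gives $\|\Delta_h^M f_\eps\|_{L^p(\R^N)}^p\leq C_\phi(\lambda_\eps|h|)^{Mp}$. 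Substituting into the nonlocal functional and performing the change of variables $z=h/\eps$,
\[
\int_{\R^N}\!\int_{\R^N}\rho_\eps(h)\frac{|\Delta_h^M f_\eps(x)|^p}{|h|^{sp}}\,dx\,dh \leq C\,\lambda_\eps^{Mp}\eps^{(M-s)p}\int_{\R^N}\rho(z)|z|^{p(M-s)}\,dz.
\]
With the calibrated choice $\lambda_\eps=\eps^{-(M-s)/M}$ the prefactor $\lambda_\eps^{Mp}\eps^{(M-s)p}$ equals $1$, and the remaining integral is finite by the moment hypothesis, yielding the required uniform bound as $\eps\downarrow 0$.

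The main obstacle is this delicate calibration of the frequency: $\lambda_\eps$ must blow up fast enough to make $(f_\eps)$ vibrate out of every $L_{loc}^p$-compact set, yet slowly enough that the $\lambda_\eps^{Mp}$-amplification of the $M$-th difference is exactly absorbed by the $\eps^{(M-s)p}$-factor produced by the scaling of $\rho_\eps$. The moment condition on $\rho$ is precisely the ingredient that permits closing the estimate at the critical rate $\lambda_\eps\sim\eps^{-(M-s)/M}$; without it, the tail of $\rho$ at infinity would prevent the integral from being finite.
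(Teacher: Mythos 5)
Your proposal is correct, and the core mechanism is the same as the paper's: both proofs exploit that the $M$-th order difference satisfies $\|\Delta_h^M f\|_{L^p}\lesssim |h|^M\sup_{|\alpha|= M}\|D^\alpha f\|$, build a bounded family whose $M$-th derivatives blow up at the critical rate $\eps^{-(M-s)}$, and then close the estimate by the change of variables $z=h/\eps$ together with the moment condition $\int\rho(z)|z|^{p(M-s)}\,\mathrm{d}z<\infty$. Where you genuinely diverge is in the choice of witness and in the non-compactness argument. The paper uses a \emph{concentration} profile $f_n(x)=n^{\frac{M-s}{Mp}}\Phi(n^{\frac{M-s}{M}}x_N)\varphi(y)$, normalized so that $\|f_n\|_{L^p}$ is a fixed positive constant while $f_n\to0$ a.e.; non-compactness then follows at once, since any $L^p_{loc}$-limit of a subsequence would have to vanish a.e. You instead use an \emph{oscillation} profile $\phi(x)\cos(\lambda_\eps x\cdot e)$ with $\lambda_\eps=\eps^{-(M-s)/M}$, and rule out compactness via weak convergence to $0$ combined with the persistence of the $L^p$-norm (for $p>1$), or via the failure of uniform translation-equicontinuity along $h_\eps=(\pi/\lambda_\eps)e$ (for $p=1$). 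Both routes are valid; your $p=1$ argument in fact works for all $p\in[1,\infty)$ and could replace the Riemann--Lebesgue step entirely, which would streamline the write-up. One cosmetic remark: your pointwise bound $|\Delta_h^Mf_\eps(x)|\le C_\phi(\lambda_\eps|h|)^M$ together with the support count $(M+1)|\mathrm{supp}(\phi)|$ plays exactly the role of the paper's $L^p$-estimate $\|\Delta_h^Mf_n\|_{L^p}^p\le |h|^{Mp}\sup_{|\alpha|\le M}\|D^\alpha f_n\|_{L^p}^p$, and no splitting of the $h$-integral into small and large $|h|$ is needed in either version precisely because the moment hypothesis controls the tail.
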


\begin{rem}
In some particular cases it is possible to get rid of assumption \eqref{molli3}. For instance, if the $\rho_\eps$ are non-increasing and supported in some ball of the form $B_{r\eps}$ for all $\eps>0$ and some $r>0$, then the result still holds. Notice also that the conclusion of Theorem \ref{noncompactness} still holds for slightly more general functionals in the spirit of \eqref{LAMBDA} with, say, $\omega=\left|\cdot\right|^{q/p}$, $\Omega=\left|\cdot\right|^{p}$, for any $q\geq1$.
\end{rem}
In the same vein, we obtain the following
\begin{theorem}\label{noncpctB}
Let $s>0$, $p\in[1,\infty)$ and $q\in[1,\infty)$.
Then, there exists a bounded sequence $(f_\eps)_{\eps>0}\subset L^p(\R^N)$ satisfying
\begin{align}
\limsup_{\eps\downarrow0}\,\eps\|f_\eps\|_{B_{p,q}^{s-\eps}(\R^N)^\ast}^q<\infty, \label{Ccpct}
\end{align}
but which is not relatively compact in $L_{loc}^p(\R^N)$.
\end{theorem}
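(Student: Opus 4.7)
The plan is to exhibit an oscillating envelope concentrated in a single Littlewood--Paley block at scale $2^{k_\eps}$ with $k_\eps\to\infty$ as $\eps\downarrow 0$. This construction will be bounded in $L^p(\R^N)$, non-compact in $L_{loc}^p(\R^N)$, yet its pairing against any element of $B_{p,q}^{s-\eps}(\R^N)$ will decay exponentially in $k_\eps$. Concretely, fix a non-trivial real-valued Schwartz function $\phi\in\mathcal{S}(\R^N)$ whose Fourier transform $\widehat\phi$ is supported in $B(0,1/4)$, and set
\[f_\eps(x):=\phi(x)\cos(2^{k_\eps}x_1),\qquad k_\eps:=\lceil 1/\eps\rceil.\]
The pointwise bound $|f_\eps|\le|\phi|$ makes $(f_\eps)$ bounded in $L^p(\R^N)$; since $\phi\in L^{p'}(\R^N)$ as well, the pairing $\langle f_\eps,g\rangle=\int_{\R^N}f_\eps g$ is well-defined for every $g\in B_{p,q}^{s-\eps}(\R^N)\subset L^p(\R^N)$.

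To bound the dual norm, note that
\[\widehat{f_\eps}(\xi)=\tfrac12\bigl(\widehat\phi(\xi-2^{k_\eps}e_1)+\widehat\phi(\xi+2^{k_\eps}e_1)\bigr)\]
is supported in the annulus $\{2^{k_\eps-1}\le|\xi|\le 2^{k_\eps+1}\}$ as soon as $k_\eps\ge 2$. Writing $g=\sum_{k\ge -1}P_k g$ for a standard Littlewood--Paley decomposition with real even dyadic multipliers (so each $P_k$ is self-adjoint), frequency separation forces $\langle f_\eps,P_k g\rangle=0$ unless $|k-k_\eps|\le 2$. Hölder's inequality together with the defining Besov bound $\|P_k g\|_{L^p(\R^N)}\le 2^{-k(s-\eps)}\|g\|_{B_{p,q}^{s-\eps}(\R^N)}$ then yield
\[|\langle f_\eps,g\rangle|\lesssim\|\phi\|_{L^{p'}(\R^N)}\,2^{-k_\eps(s-\eps)}\|g\|_{B_{p,q}^{s-\eps}(\R^N)},\]
and taking the supremum over $\|g\|_{B_{p,q}^{s-\eps}(\R^N)}\le 1$ gives $\|f_\eps\|_{B_{p,q}^{s-\eps}(\R^N)^*}\lesssim 2^{-k_\eps(s-\eps)}$. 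Since $k_\eps(s-\eps)\sim s/\eps\to\infty$, this produces $\eps\,\|f_\eps\|_{B_{p,q}^{s-\eps}(\R^N)^*}^q\to 0$, well within the bound required by \eqref{Ccpct}.

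For the failure of compactness, suppose a subsequence $(f_{\eps_n})_n$ converges to some $g\in L_{loc}^p(\R^N)$. Testing against $\psi\in C_c^\infty(\R^N)$ and applying the Riemann--Lebesgue lemma to the $L^1$ function $\phi\psi$ gives $\int f_{\eps_n}\psi\to 0$, while $L_{loc}^p$-convergence gives $\int f_{\eps_n}\psi\to\int g\psi$; hence $g\equiv 0$. However, on any ball $K\subset\R^N$ with $\int_K|\phi|^p>0$, the weak-$*$ convergence of the $1$-periodic function $|\cos(2^{k}\cdot)|^p$ to its positive mean $C_p:=\frac{1}{2\pi}\int_0^{2\pi}|\cos\theta|^p\,d\theta$ forces
\[\|f_{\eps_n}\|_{L^p(K)}^p\longrightarrow C_p\int_K|\phi|^p\,>\,0,\]
contradicting $g\equiv 0$. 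So no subsequence of $(f_\eps)$ is $L_{loc}^p$-convergent.

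The main technical subtlety is that the identification $(B_{p,q}^{s-\eps})^*\simeq B_{p',q'}^{-(s-\eps)}$ is not straightforward when $p=1$ or $q=1$, so the dual norm is handled directly at the level of its supremum definition rather than through an explicit identification with a negative-regularity Besov space. This works because the smoothness of $f_\eps$ and the uniform bound $f_\eps\in L^{p'}(\R^N)$ validate the pairing across the whole range $p,q\in[1,\infty)$. No constants degenerate as $\eps\downarrow 0$: the Besov block bound has implicit constant $1$ and the decay factor $2^{-k_\eps(s-\eps)}\sim 2^{-s/\eps}$ swamps every power of $\eps$.
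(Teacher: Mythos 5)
Your proof establishes a different statement from the one in the paper, because the notation has been misread. The superscript ``$\ast$'' in $\|f_\eps\|_{B_{p,q}^{s-\eps}(\R^N)^\ast}$ does not denote the dual space: as explained in the remark immediately following the theorem, it indicates that the $B_{p,q}^{s-\eps}$-\emph{norm} of $f_\eps$ is computed using finite differences of order $\lfloor s\rfloor+1$ (according to Definition \ref{DEFBESOV}). Thus \eqref{Ccpct} requires the genuine Besov norms $\|f_\eps\|_{B_{p,q}^{s-\eps}(\R^N)}$ to blow up no faster than $\eps^{-1/q}$ --- the natural normalization in view of the limiting embedding \eqref{TriebBesov} --- and the whole point of the theorem is the contrast with \eqref{classic}, where the analogous bound with lower-order differences \emph{does} force relative compactness. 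What you bound instead is the norm of $f_\eps$ acting as a linear functional on $B_{p,q}^{s-\eps}(\R^N)$, which is a much weaker and essentially unrelated piece of information: any bounded sequence oscillating at high frequency has small dual norm, so your version of the statement carries none of the intended content.

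Concretely, your sequence $f_\eps=\phi(\cdot)\cos(2^{k_\eps}x_1)$ with $k_\eps=\lceil 1/\eps\rceil$ fails \eqref{Ccpct} in its intended meaning: since $\widehat{f_\eps}$ is supported in the annulus $\{|\xi|\sim 2^{k_\eps}\}$, one has $\|f_\eps\|_{B_{p,q}^{s-\eps}(\R^N)}\sim 2^{k_\eps(s-\eps)}\sim 2^{(s-\eps)/\eps}$, hence $\eps\,\|f_\eps\|_{B_{p,q}^{s-\eps}(\R^N)}^q\to\infty$ as $\eps\downarrow0$. The paper's construction is calibrated precisely to avoid this: it concentrates in one variable only, taking $f_n(x)=n^{\gamma/(Mp)}\Phi(n^{\gamma/M}x_N)\varphi(y)$ with $0<\gamma\le 1/q$ and $M=\lfloor s\rfloor +1$, so that $\|\Delta_h^Mf_n\|_{L^p(\R^N)}\lesssim |h|^{M}n^{\gamma}$ and the relevant weighted integral (with the kernel $\rho_n(h)=\frac{1}{n\sigma_N}|h|^{-(N-1/n)}\mathds{1}_{(0,1)}(|h|)$, which encodes the prefactor $\eps\sim 1/n$) stays bounded, while $f_n\to0$ a.e.\ with $\|f_n\|_{L^p(\R^N)}$ constant, precluding relative compactness in $L^p_{loc}(\R^N)$. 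To repair your argument you would need to replace the oscillating profile by a concentration profile of this slowly-growing type and estimate the Besov semi-norm itself rather than a dual pairing.
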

The subscript ''$\ast$" in \eqref{Ccpct} means that the $B_{p,q}^{s-\eps}$-norm of $f_\eps$ is calculated using $(\lfloor s\rfloor+1)$-th order finite differences (according to Definition \ref{DEFBESOV}).
This is no longer true if, instead, we use smaller order differences. For example, if $(f_\eps)_{\eps>0}$ is bounded in $L^p(\R^N)$, then
\begin{align}
\limsup_{\eps\downarrow0}\,\eps\|f_\eps\|_{W^{1-\eps,p}(\R^N)}^p<\infty, \label{classic}
\end{align}
implies that $(f_\eps)_{\eps>0}$ is relatively compact in $L_{loc}^p(\R^N)$, while
\begin{align}
\limsup_{\eps\downarrow0}\,\eps\|f_\eps\|_{B_{p,p}^{1-\eps}(\R^N)^\ast}^p<\infty,
\end{align}
does not. Evidently, this restriction is immaterial if $0<s\notin\N$.

\subsection{An approximation criteria}
It is well-known that neither $C_c^\infty(\R^N)$ nor $\mathscr{S}(\R^N)$ are dense in $B_{p,\infty}^s(\R^N)$. If the question of how to approximate a given $f\in B_{p,q}^s(\R^N)$ in a "suitable manner" has already been well-studied (see e.g. \cite{Kowalski,Nikolskii,Stasyuk}), to the author's knowledge it seems, however, that no criterion to recognize a function $f\in B_{p,\infty}^s(\R^N)$ which can be approximated by smooth functions in its natural (strong) topology is available in the literature.

An interesting consequence of (the proof of) Theorem \ref{THEO} is that it gives such a criterion.
\begin{corollary}\label{approx}
Let $M\in\N^\ast$, $s\in(0,M)$, $p\in[1,\infty)$.
Let $(\rho_\varepsilon)_{\varepsilon>0}\subset L^1(\R^N)$ be a sequence of radial functions satisfying \eqref{molli} and \eqref{molli3}, and let $\omega\in C_{\mathrm{inc}}^+$.
Then, the following are equivalent:
\begin{enumerate}
\item[(i)] $f\in L^p(\R^N)$ is such that
\[\lim_{\varepsilon\downarrow0}\int_{\R^N}\rho_\eps(h)~\omega\left(\frac{\|\Delta_h^Mf\|_{L^p(\R^N)}}{|h|^s}\right)\mathrm{d}h=0,\]
\item[(ii)] $f\in B_{p,\infty}^s(\R^N)$ and there exists $(f_n)_{n\geq0}\subset C_c^\infty(\R^N)$ such that
\[\|f-f_n\|_{B_{p,\infty}^s(\R^N)}\to0 \qquad \text{ as }~~n\to\infty.\]
\end{enumerate}
\end{corollary}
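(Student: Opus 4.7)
The plan is to verify the two implications separately, with the harder direction being (i)$\Rightarrow$(ii).

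For (ii)$\Rightarrow$(i), pick $(f_n)\subset C_c^\infty(\R^N)$ converging to $f$ in $B_{p,\infty}^s(\R^N)$. Writing $f=f_n+(f-f_n)$ and applying the rough subadditivity of $\omega$, the integral in (i) is bounded, up to the constant $A$, by the sum of the analogous integrals for $f_n$ and for $f-f_n$. Since $\int\rho_\eps=1$ and $\omega$ is monotone, the $(f-f_n)$-piece is dominated by $\omega([f-f_n]_{B_{p,\infty}^s(\R^N)})$, which is small for large $n$ uniformly in $\eps$. For the $f_n$-piece, Taylor's formula applied to $f_n\in C_c^\infty(\R^N)$ yields $\|\Delta_h^Mf_n\|_{L^p(\R^N)}\leq C_n|h|^M$, so that $\|\Delta_h^Mf_n\|_{L^p(\R^N)}/|h|^s$ vanishes as $|h|\to0$ (since $M>s$). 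Splitting the $h$-integral at $|h|=\delta$ and invoking the last condition in \eqref{molli} on $\{|h|\geq\delta\}$ shows this piece tends to $0$ as $\eps\downarrow 0$ for each fixed $n$. Taking $\limsup_{\eps\downarrow 0}$ followed by $n\to\infty$ delivers (i).

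For (i)$\Rightarrow$(ii), Theorem~\ref{THEO} first ensures $f\in B_{p,\infty}^s(\R^N)$. The key additional step is to upgrade the hypothesis to the vanishing property
\[\lim_{|h|\to 0}\,\frac{\|\Delta_h^Mf\|_{L^p(\R^N)}}{|h|^s}=0. \qquad(\star)\]
I revisit the lower bound in the proof of Theorem~\ref{THEO}, whose argument should produce, for every $h_0\ne 0$, some $\eps_0=\eps_0(h_0)$ comparable to $|h_0|$ such that
\[\omega\!\left(\frac{\|\Delta_{h_0}^Mf\|_{L^p(\R^N)}}{|h_0|^s}\right)\leq C\int_{\R^N}\rho_{\eps_0}(h)\,\omega\!\left(\frac{\|\Delta_h^Mf\|_{L^p(\R^N)}}{|h|^s}\right)\mathrm{d}h,\]
exploiting the radiality of $\rho$, its positivity on a small ball (as permitted by Remark~\ref{rkhypo}), and the continuity of $h\mapsto\|\Delta_h^Mf\|_{L^p(\R^N)}$, which ensures that on a thin annulus near $h_0$ the ratio $\|\Delta_h^Mf\|_{L^p(\R^N)}/|h|^s$ stays comparable to its value at $h_0$ up to universal constants. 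Granted this localized inequality, (i) forces the right-hand side to $0$ as $|h_0|\to 0$, and $(\star)$ follows from $\omega(0)=0$ and the continuity of $\omega$.

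With $(\star)$ in hand, I approximate $f$ by $f_n:=\chi_n(\phi_n\ast f)$, where $(\phi_n)\subset C_c^\infty(\R^N)$ is a standard mollifier and $(\chi_n)\subset C_c^\infty(\R^N)$ are cutoffs tending pointwise to $1$. The commutation $\Delta_h^M(\phi_n\ast f)=\phi_n\ast\Delta_h^Mf$ combined with Young's convolution inequality shows that $[f-\phi_n\ast f]_{B_{p,\infty}^s(\R^N)}$ is handled by splitting $\sup_h$ at $|h|=\delta$: on $|h|<\delta$ one invokes $(\star)$, while on $|h|\geq\delta$ one uses $2^M\|f-\phi_n\ast f\|_{L^p(\R^N)}/\delta^s\to 0$ as $n\to\infty$. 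A Leibniz-type expansion of $\Delta_h^M(\chi_n g)$ together with the smoothness of $\chi_n$ and of $\phi_n\ast f$ then handles the cutoff step, yielding (ii). The main obstacle throughout is the pointwise localization underlying $(\star)$; the remaining ingredients are essentially routine variants of arguments already used for Theorem~\ref{THEO} and standard mollification.
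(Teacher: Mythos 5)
Your overall plan mirrors the paper's: you identify condition (i) as equivalent to the vanishing property $(\star)$, i.e. $\lim_{|h|\to0}\|\Delta_h^Mf\|_{L^p}/|h|^s=0$, and then translate this into approximability by $C_c^\infty$ in $B_{p,\infty}^s$-norm. Your (ii)$\Rightarrow$(i) argument (split $f=f_n+(f-f_n)$, use rough subadditivity, Taylor for the smooth piece, \eqref{molli} for the tail) is fine. For (i)$\Rightarrow$(ii), after obtaining $(\star)$ you construct the approximating sequence directly via mollification plus cutoff; the paper instead routes through the abstract Propositions \ref{cooo} and \ref{completion} on the auxiliary space $N^{s,p}(\R^N)$. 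Your route is more explicit, and the key point that makes the mollification step work — namely $\sup_{|h|<\delta}\|\Delta_h^M(f-\phi_n\ast f)\|_{L^p}/|h|^s\leq 2\sup_{|h|<\delta}\|\Delta_h^Mf\|_{L^p}/|h|^s$ is small uniformly in $n$ thanks to $(\star)$ — is exactly right; the cutoff step then reduces to density of $C_c^\infty$ in $W^{M,p}$ and the embedding $W^{M,p}\hookrightarrow B_{p,\infty}^s$.

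There is, however, a genuine gap in the justification you give for the crucial ``localized inequality.'' You assert that the continuity of $h\mapsto\|\Delta_h^Mf\|_{L^p}$ makes the ratio $\|\Delta_h^Mf\|_{L^p}/|h|^s$ stay comparable to its value at $h_0$ ``up to universal constants'' on a thin annulus near $h_0$. This is false: uniform continuity of $h\mapsto\|\Delta_h^Mf\|_{L^p}$ gives an additive error $\eta$ which, after dividing by $|h|^s$, blows up as $|h_0|\to0$; moreover, $\|\Delta_{h_0}^Mf\|_{L^p}$ can vanish while nearby values do not, so no multiplicative comparability is available at all. The correct mechanism — and what the paper actually does — is the structural identity $\tau_zf-f=(\tau_hf-f)+\tau_h(\tau_{z-h}f-f)$ (and its order-$M$ version, Lemma \ref{iteratedtrans}) combined with the radial monotonicity of the auxiliary minorant $\Psi$; this yields estimate \eqref{controle}, recorded as a pointwise-in-$h$ bound in Remark \ref{rkbest}, with no continuity argument whatsoever. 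The reduction to general radial $\rho$ is then covered by Claims A and B together with Remark \ref{ClaimAB-limsup}. If you replace your continuity heuristic by a citation to Remark \ref{rkbest} and Remark \ref{ClaimAB-limsup}, your proof becomes correct.
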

A noteworthy consequence of Corollary \ref{approx} is the following
\begin{ex}
Let $s\in(0,1)$ and $p\in[1,\infty)$. Then, with the choice \eqref{choice2} and $\omega(t)=t^p$ we find that condition (ii) above is equivalent to
\begin{align}
\lim_{\eps\downarrow0}\int_{\R^N}\int_{\eps<|x-y|<2\eps}\frac{|f(x)-f(y)|^p}{~~|x-y|^{N+sp}}\mathrm{d}x\mathrm{d}y=0,
\end{align}
or, more generally, to
\begin{align}
\lim_{\eps\downarrow0}\int_{\eps<|h|<2\eps}\frac{\|\Delta_h^Mf\|_{L^p(\R^N)}^q}{|h|^{N+sq}}\mathrm{d}h=0,
\end{align}
in the higher order case.
\end{ex}


In Sections \ref{ntn} and \ref{sectionsubatom} we detail all our notations and useful definitions. In Section \ref{sectionN}, we show some preliminary estimates which aims to simultaneously open the way to Corollary \ref{approx} and to explain why it is more convenient to represent $B_{p,\infty}^s(\R^N)$ in terms of the supremum of \eqref{LAMBDA} rather than in terms of its limits. Section \ref{sectionNIKOLSKII} is devoted to the proof of Theorem \ref{THEO} and Section \ref{sectionSOBOLEV} to that of Theorems \ref{THEO2}, \ref{THEO3}, and \ref{imbLip}. In Section \ref{sectionLIMITING} we establish Theorem \ref{nonlimit}. In Section \ref{sectionCOMPACT}, we prove Theorems \ref{noncompactness} and \ref{noncpctB}. Finally, in the Appendix, we discuss Proposition \ref{nonextension}.

\section{Notations and definitions}\label{ntn}

Throughout the paper we will make use of the following notations. \\

\begin{tabular}{rl}
$\S^{N-1}$ : & is the unit sphere of $\R^N$; \\
$\mathcal{H}^{N-1}$ : & is the $(N-1)$-dimensional Hausdorff measure; \\
$|K|$ : & is the Lebesgue measure of the set $K$ (also denoted $\lambda_n(K)$); \\
$\mathds{1}_K$ : & is the characteristic function of the set $K$; \\
$B_R$ : & is the ball of radius $R>0$ centered at the origin; \\
$B_R(x)$ : & is the ball of radius $R>0$ centered at $x\in\R^N$; \\
$\tau_h$ : & is the translation operator $\tau_hf(x)=f(x+h)$, $x,h\in\mathbb{R}^N$; \\
$f\ast g$ : & is the convolution of $f$ and $g$; \\
$\lesssim$ : & is the "approximatively-less-than" symbol: $a\lesssim b\Leftrightarrow a\leq Cb$; \\
$\sim$ : & is the equivalence symbol: $a\sim b\Leftrightarrow a\lesssim b~~\text{and}~~b\lesssim a$; \\
$\fint_A$ : & is the integral mean symbol: $\fint_Af=\frac{1}{|A|}\int_Af$.
\end{tabular} \\

We denote by $L^p(\R^N)$ the \emph{Lebesgue space} of (equivalence classes of) functions for which the $p$-th power of the absolute value is Lebesgue integrable (resp. essentially bounded functions when $p=\infty$); by $C_c^\infty(\R^N)$ the space of smooth compactly supported functions; by $\mathscr{S}(\R^N)$ the \emph{Schwartz space} of rapidly decaying functions; and, by $\mathscr{S}'(\R^N)$, its dual, the space of tempered distributions. The \emph{Lipschitz space} $C^{0,1}(\R^N)$ is the space of functions $f\in L^\infty(\R^N)$ for which the semi-norm
\begin{align}
[f]_{C^{0,1}(\R^N)}:=\sup_{h\ne0}\frac{\|\tau_hf-f\|_{L^\infty(\R^N)}}{|h|}, \label{Lipcte}
\end{align}
is finite. The space $C^{0,1}(\R^N)$ is a Banach space for the norm
\[\left\|f\right\|_{C^{0,1}(\R^N)}:=\left\|f\right\|_{L^\infty(\R^N)}+\left[f\right]_{C^{0,1}(\R^N)}.\]
The number \eqref{Lipcte} is called the Lipschitz constant of $f$. For the sake of clarity, we recall some further definitions.
\begin{defn} Let $p\in[1,\infty)$ and $k\in \N^\ast$. The $k$-\emph{th order Sobolev space} $W^{k,p}(\R^N)$ is defined as the closure of $C_c^\infty(\R^N)$ under the norm
\[\|f\|_{W^{k,p}(\R^N)}:=\|f\|_{L^p(\R^N)}+\bigg(\sum_{1\leq|\alpha|\leq k}\|D^\alpha f\|_{L^p(\R^N)}^p\bigg)^{1/p}.\]
\end{defn}
\begin{defn}
The space of \emph{functions of bounded variation}, denoted by $BV(\R^N)$,  is the space of all $f\in L^1(\R^N)$ such that
\[[f]_{BV(\R^N)}:=\sup\left\{\int_{\R^N}f(x)\,\mathrm{div}\,\phi(x)\,\mathrm{d}x : \phi\in C_c^1(\R^N),\,\|\phi\|_{L^\infty(\R^N)}\leq1\right\}<\infty,\]
naturally endowed with the norm
\[\|f\|_{BV(\R^N)}:=\|f\|_{L^1(\R^N)}+[f]_{BV(\R^N)}.\]
\end{defn}
\begin{defn}\label{DEFBESOV}
Let $M\in\N^\ast$, $s\in(0,M)$ and $p,q\in[1,\infty]$. The \emph{Besov space} $B_{p,q}^s(\mathbb{R}^N)$ consists in all functions $f\in L^p(\mathbb{R}^N)$ such that
\begin{align}
[f]_{B_{p,q}^s(\mathbb{R}^N)}:=\left(\int_{\R^N}\|\Delta_h^Mf\|_{L^p(\R^N)}^q\frac{\mathrm{d}h}{|h|^{N+sq}}\right)^{\frac{1}{q}}<\infty, \label{snBesov}
\end{align}
which, in the case $q=\infty$, is to be understood as
\[[f]_{B_{p,\infty}^s(\mathbb{R}^N)}:=\sup_{h\in\mathbb{R}^N\setminus\{0\}}\frac{\|\Delta_h^Mf\|_{L^p(\mathbb{R}^N)}}{|h|^s}<\infty,\]
\end{defn}
\noindent where $\Delta_h^Mf$ is given by \eqref{iterateddiff2}. The space $B_{p,q}^s(\mathbb{R}^N)$ is naturally endowed with the norm
\[\|f\|_{B_{p,q}^s(\mathbb{R}^N)}:=\|f\|_{L^p(\mathbb{R}^N)}+[f]_{B_{p,q}^s(\mathbb{R}^N)}.\]
\begin{rem}\label{entierM}
Of course, if one denotes the semi-norm \eqref{snBesov} by $[f]_{B_{p,q}^s(\R^N)}^{(M)}$, then for $M_1,M_2\in\N^\ast$ with $M_1<M_2$ and $s\in(0,M_1)$ it holds
\[[f]_{B_{p,q}^s(\R^N)}^{(M_1)}\sim[f]_{B_{p,q}^s(\R^N)}^{(M_2)},\]
(similarly when $q=\infty$), so that the definition above is indeed consistent. We refer to \cite{Triebel} (e.g. estimate (45) on p.99) or Lemma \ref{2Morder} for further details.
\end{rem}
\begin{rem}
The integral in \eqref{snBesov} can be indifferently replaced by an integral over $\{|h|\leq \delta\}$ for any $\delta>0$, or on the whole $\mathbb{R}^N$ since the singular part in $h$ in the integral arise when $h$ is close to zero, while the integral on $\{|h|>\delta\}$ can always be dominated by the $L^p$-norm of $f$.
\end{rem}

Of special interest are the cases $q=p$, $p=\infty$ and/or $q=\infty$. The \emph{fractional Sobolev spaces} $W^{s,p}(\mathbb{R}^N)$ (sometimes also called \emph{Slobodeckij}, \emph{Gagliardo}, or \emph{Aronszajn spaces}) is defined by $W^{s,p}(\mathbb{R}^N)=B_{p,p}^s(\mathbb{R}^N)$ for $s\notin\N$. In this context, the semi-norm \eqref{snBesov} when $s\in(0,1)$ is often referred to as the Gagliardo semi-norm. \\

When $q=\infty$, the space $B_{p,\infty}^s(\mathbb{R}^N)$ is called the \emph{Nikol'skii space}. This scale gives another interesting way to measure the convergence rate of the translate of a given function to itself. It is well-known that, for any $p,q\in[1,\infty)$ and $s>0$,
\[B_{p,q}^s(\mathbb{R}^N)\hookrightarrow B_{p,\infty}^s(\mathbb{R}^N),\]
where "$\hookrightarrow$" stands for the continuous imbedding symbol. We refer to \cite{Simon,TriebelI} for a proof of this fact. When $p=q=\infty$, then the space $B_{\infty,\infty}^s(\R^N)$ coincides with the H\"older-Zygmund space $\mathcal{C}^s(\R^N)$. \\

Moreover, by contrast with $W^{s,p}(\mathbb{R}^N)$ (see e.g. \cite{FSV} for a simple proof of this fact) or, more generally, with the spaces $B_{p,q}^s(\R^N)$ with $p,q\in(1,\infty)$, neither $C_c^\infty(\mathbb{R}^N)$ nor $\mathscr{S}(\mathbb{R}^N)$ are dense in $B_{p,\infty}^s(\mathbb{R}^N)$, see e.g. \cite[Theorem 2.3.2 (a), p.172]{TriebelI}. The Nikol'skii spaces are Banach spaces but, unlike, say, $W^{s,p}(\mathbb{R}^N)$ with $1<p<\infty$, neither reflexive \cite[Remark 2, p.199]{TriebelI} nor separable \cite[Theorem 2.11.2 (d), p.237]{TriebelI}.

\section{Subatomic decompositions}\label{sectionsubatom}

There exists many ways to decompose a function $f\in B_{p,q}^s(\R^N)$ into "building blocks". The theory of subatomic (or quarkonial) decompositions developed by Triebel in \cite{TriebelFractal,Triebel3} is one of them of particular interest because, unlike related decompositions of atomic or, say, Littlewood-Paley type, it yields a decomposition of any function $f\in B_{p,q}^s(\R^N)$ on a suitable system of functions which is independent of $f$ and the resulting coefficients are linearly dependent on $f$. In such a framework, the search for a function amounts, roughly speaking, to seeking for a discrete sequence of numbers.

For the convenience of the reader we recall some basic definitions.
\begin{defn}\label{quarks}
Let $\nu\geq0$, $m\in\Z^N$ and $\psi\in C_c^\infty(\R^N)$ be a non-negative function with $\mathrm{supp}(\psi)\subset B_{2^r}$ for some $r\geq0$ and
\begin{align}
\sum_{k\in\Z^N}\psi(x-k)=1, \qquad{\mbox{if }} ~x\in\R^N.
\end{align}
Let $Q_{\nu,m}$ be the cube of sides parallel to the coordinate axis with side-length $2^{-\nu}$ and centered at $2^{-\nu}m$.
Let $s\in\R$, $1\leq p\leq\infty$, $\beta\in\N^N$ and
\[ \psi^\beta(x)=x^\beta\psi(x):=x_1^{\beta_1}...x_N^{\beta_N}\psi(x). \]
Then,
\[(\beta\mathrm{qu})_{\nu,m}(x):=2^{-\nu(s-\frac{N}{p})}\psi^\beta(2^\nu x-m), \qquad{\mbox{  }} x\in\R^N, \]
is called an $(s,p)$-$\beta$-quark relative to $Q_{\nu,m}$.
\end{defn}
\begin{defn}\label{DEFquarkonial}
Let $s>0$, $1\leq p,q\leq\infty$ and $(\beta\mathrm{qu})_{\nu,m}$ be $(s,p)$-$\beta$-quarks according to Definition \ref{quarks}. Let $\varrho>r$ where $r$ has the same meaning as in Definition \ref{quarks}. For all $\lambda=\{\lambda_{\nu,m}^\beta\in\mathbb{C}:(\nu,m,\beta)\in\N\times\Z^N\times\N^N\}$ we set
\begin{align}
\|\lambda\|_{\varrho,p,q}:=\sup_{\beta\in\N^N}2^{\varrho|\beta|}\bigg(\sum_{\nu\geq0}\bigg(\sum_{m\in\Z^N}|\lambda_{\nu,m}^\beta|^p\bigg)^{q/p}\bigg)^{1/q},
\end{align}
with obvious modification if $p=\infty$ and/or $q=\infty$.

We call $\mathbf{B}_{p,q}^s(\R^N)$ the collection of all $f\in\mathscr{S}'(\R^N)$ which can be represented as
\begin{align}
f(x)=\sum_{\beta\in\N^N}\sum_{\nu=0}^\infty\sum_{m\in\Z^N}\lambda_{\nu,m}^\beta(\beta\mathrm{qu})_{\nu,m}(x), \label{repre}
\end{align}
endowed with the norm
\begin{align}
\|f\|_{\mathbf{B}_{p,q}^s(\R^N)}:=\inf\,\|\lambda\|_{\varrho,p,q}, \label{quarknorm}
\end{align}
where the infinimum is taken over all admissible representations \eqref{repre}.
\end{defn}
The standard fact of subatomic decompositions states as follows
\begin{theorem}\label{THquark}
Let $s>0$ and $1\leq p,q\leq\infty$. Then, \eqref{quarknorm} does not depend upon the choice of $\varrho$ nor on $\psi$, and $\mathbf{B}_{p,q}^s(\R^N)$ is a Banach space which coincides with the space $B_{p,q}^s(\R^N)$ introduced in Definition \ref{DEFBESOV}. Moreover,
\begin{align}
\|f\|_{\mathbf{B}_{p,q}^s(\R^N)}\sim\|f\|_{B_{p,q}^s(\R^N)}.
\end{align}
\end{theorem}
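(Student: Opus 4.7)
The plan is to reduce everything to the two-sided estimate
\[
\|f\|_{\mathbf{B}_{p,q}^s(\R^N)} \sim \|f\|_{B_{p,q}^s(\R^N)},
\]
since once this is established the independence of $\|\cdot\|_{\mathbf{B}_{p,q}^s}$ from $\varrho$ and $\psi$ follows by comparing the norms obtained from different admissible choices, and the Banach space property of $\mathbf{B}_{p,q}^s(\R^N)$ is inherited from that of $B_{p,q}^s(\R^N)$ once the identification is made. This is a classical result of Triebel (see \cite{TriebelFractal,Triebel3}); my plan would be to sketch the two directions, the easy one by direct summation and the hard one by combining a Littlewood--Paley decomposition with a Taylor expansion against the partition of unity.

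For the upper bound $\|f\|_{B_{p,q}^s} \lesssim \|f\|_{\mathbf{B}_{p,q}^s}$, I would start from an admissible representation $f=\sum_{\beta,\nu,m}\lambda_{\nu,m}^\beta(\beta\mathrm{qu})_{\nu,m}$ and estimate each quark's contribution to the Besov semi-norm via \eqref{snBesov} with $M>s$. The normalizing factor $2^{-\nu(s-N/p)}$ in the definition of $(\beta\mathrm{qu})_{\nu,m}$ is precisely what is needed so that, uniformly in $\beta$, one has $\|(\beta\mathrm{qu})_{\nu,m}\|_{B_{p,q}^s} \lesssim c(\beta)$, with $c(\beta)$ growing at most polynomially in $|\beta|$ (because $\psi^\beta$ has support of diameter $2^r$ and $\|\psi^\beta\|_{C^{M+1}}\lesssim 2^{r|\beta|}$). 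Choosing $\varrho$ strictly larger than $r$ in Definition \ref{DEFquarkonial} then makes the series $\sum_\beta 2^{\varrho|\beta|}c(\beta)$ absolutely convergent after pulling out the factor $2^{\varrho|\beta|}$, and the dyadic structure of the supports $\mathrm{supp}((\beta\mathrm{qu})_{\nu,m})\subset 2^r Q_{\nu,m}$ allows one to recombine the sums in $m$ and $\nu$ using the $\ell^q(L^p)$-characterization of $B_{p,q}^s$ via Littlewood--Paley blocks.

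For the lower bound $\|f\|_{\mathbf{B}_{p,q}^s}\lesssim\|f\|_{B_{p,q}^s}$, I would begin with a smooth Littlewood--Paley decomposition $f=\sum_{\nu\geq 0}f_\nu$, localize each $f_\nu$ with the partition of unity $\{\psi(2^\nu\cdot-m)\}_{m\in\Z^N}$ to obtain pieces $f_{\nu,m}:=\psi(2^\nu\cdot-m)f_\nu$ supported in $2^r Q_{\nu,m}$, and then expand each $f_{\nu,m}(2^{-\nu}(\cdot+m))$ in a Taylor series at the origin. The Taylor coefficients, suitably rescaled, define $\lambda_{\nu,m}^\beta$, and the sequence norm $\|\lambda\|_{\varrho,p,q}$ is controlled by combining Bernstein-type estimates on $f_\nu$ (which, being spectrally localized at frequency $\sim 2^\nu$, satisfies $\|D^\beta f_\nu\|_{L^p}\lesssim 2^{\nu|\beta|}\|f_\nu\|_{L^p}$) with the compact support of $\psi^\beta$, producing the geometric decay $2^{-\varrho|\beta|}$ needed to beat the weight in $\|\lambda\|_{\varrho,p,q}$.

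The main obstacle in this plan is precisely controlling the decay in $\beta$ during the hard direction: one needs the Taylor expansion of each localized Littlewood--Paley block to produce coefficients that decay faster than $2^{-\varrho|\beta|}$, which is where Bernstein's inequality applied to the spectrally localized $f_\nu$ enters in an essential way, and where the radius $r$ in Definition \ref{quarks} and the parameter $\varrho>r$ in Definition \ref{DEFquarkonial} must be carefully balanced. Once this is achieved, the convergence of \eqref{repre} is unconditional in $\mathscr{S}'(\R^N)$, and the equivalence of norms, the $\varrho$-independence, and the completeness of $\mathbf{B}_{p,q}^s(\R^N)$ all follow simultaneously from the fact that $B_{p,q}^s(\R^N)$ is Banach.
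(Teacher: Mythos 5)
The paper does not actually prove this statement: it is quoted as a standard fact of subatomic decomposition theory and the reader is referred to \cite{Triebel3} (and \cite{TriebelFractal}). Your sketch reproduces precisely the argument given there --- the easy direction by summing the Besov norms of the individual quarks, the hard direction by Littlewood--Paley decomposition, localization with the partition of unity $\{\psi(2^\nu\cdot-m)\}_m$, and Taylor expansion of the band-limited blocks --- so at the level of strategy there is nothing to compare. Two points of your write-up would need tightening in a full proof. First, the quark norms $c(\beta)$ do \emph{not} grow polynomially in $|\beta|$: since $|x^\beta|\leq 2^{r|\beta|}$ on $\mathrm{supp}(\psi)\subset B_{2^r}$, they grow like $2^{r|\beta|}$ up to polynomial factors, and it is exactly this exponential rate that the weight $2^{\varrho|\beta|}$ with $\varrho>r$ is designed to absorb; your next clause uses this correctly, so ``polynomially'' is a slip rather than an error of substance. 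Second, in the hard direction the decay in $\beta$ does not come from Bernstein's inequality together with the compact support of $\psi^\beta$, as you assert in your last paragraph: Bernstein gives $\|D^\beta f_\nu\|_{L^p}\leq (C2^\nu)^{|\beta|}\|f_\nu\|_{L^p}$, which after the rescaling by $2^{-\nu|\beta|}$ built into the quarks still leaves a growing factor $C^{|\beta|}$. The decay that beats $2^{\varrho|\beta|}$ for \emph{every} $\varrho$ is the super-exponential factor $1/\beta!$ carried by the Taylor coefficients of the entire function $f_\nu$ expanded at the lattice points $2^{-\nu}m$; one also needs a Plancherel--P\'olya type sampling inequality to convert the pointwise values $D^\beta f_\nu(2^{-\nu}m)$ into an $\ell^p$-in-$m$ quantity controlled by $\|f_\nu\|_{L^p}$. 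With those two ingredients made explicit, your outline is the standard proof that the paper cites.
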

We refer to \cite{Triebel3} and references therein for a proof of this. In fact, there are \emph{optimal subatomic coefficients}, i.e. coefficients $\lambda_{\nu,m}^\beta(f)$ realizing the infinimum in \eqref{quarknorm} and which can be obtained as a dual pairing of the form $\langle f,\Psi_{\nu,m}^{\beta,\varrho}\rangle_{\mathscr{S}',\mathscr{S}}$ where $(\Psi_{\nu,m}^{\beta,\varrho})\subset\mathscr{S}(\R^N)$ is an appropriate sequence of functions. We refer to \cite{Triebel3} for further details.

\section{The space $N^{s,p}(\mathbb{R}^N)$}\label{sectionN}

The aim of this section is twofold. On the one hand, we point out that, even though the spaces $B_{p,\infty}^s(\R^N)$ can be characterized as limits superior (see Proposition \ref{NversA} below), it does not yield an equivalent norm (as it does for the Sobolev spaces $W^{1,p}(\R^N)$ with $p>1$, see e.g. Lemma \ref{limEsup}). As will become clear in the next section, this is the reason why $B_{p,\infty}^s(\R^N)$ is more conveniently described as the supremum of \eqref{LAMBDA} rather than as its limit superior. On the other hand, we provide some preliminary results towards Corollary \ref{approx}. For simplicity, we consider only first order differences $\Delta_h^1f=\tau_hf-f$ but all the results of this section also hold for higher order differences.

For the sake of convenience, we define a "new" function space which, in fact, is merely another way to look at the Nikol'skii space $B_{p,\infty}^s(\mathbb{R}^N)$ as shown hereafter.
\begin{defn}
Let $s\in(0,1)$ and $p\in[1,\infty]$. Then, the space $N^{s,p}(\mathbb{R}^N)$ consists of all functions $f\in L^p(\mathbb{R}^N)$ such that
\[[f]_{N^{s,p}(\mathbb{R}^N)}:=\limsup_{|h|\to0}\frac{\|\tau_hf-f\|_{L^p(\mathbb{R}^N)}}{|h|^s}<\infty.\]
It is endowed with the following norm:
\[\|f\|_{N^{s,p}(\mathbb{R}^N)}:=\|f\|_{L^p(\mathbb{R}^N)}+[f]_{N^{s,p}(\mathbb{R}^N)}.\]
In addition, we also define
\[N_0^{s,p}(\mathbb{R}^N):=\left\{f\in N^{s,p}(\mathbb{R}^N):[f]_{N^{s,p}(\mathbb{R}^N)}=0\right\}.\]
\end{defn}
As expected, we have the
\begin{prop}\label{NversA}
Let $s\in(0,1)$ and $p\in[1,\infty]$. Then,
\[B_{p,\infty}^s(\mathbb{R}^N)=N^{s,p}(\mathbb{R}^N).\]
\end{prop}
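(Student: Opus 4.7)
The plan is to prove the two set-theoretic inclusions separately, and the argument is elementary. Set $\Phi(h) := \|\tau_h f - f\|_{L^p(\R^N)}/|h|^s$ for $h \neq 0$, so that $[f]_{B_{p,\infty}^s} = \sup_{h\neq 0}\Phi(h)$ and $[f]_{N^{s,p}} = \limsup_{|h|\to 0}\Phi(h)$.

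The inclusion $B_{p,\infty}^s(\R^N) \subseteq N^{s,p}(\R^N)$ is immediate from the tautological inequality
\[
\limsup_{|h|\to 0}\Phi(h) \;\leq\; \sup_{h\neq 0}\Phi(h),
\]
so any $f \in L^p$ with finite $B_{p,\infty}^s$ semi-norm automatically lies in $N^{s,p}(\R^N)$ (note both spaces are subsets of $L^p(\R^N)$, so there is nothing to check on the ambient space).

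For the reverse inclusion, I would fix $f \in N^{s,p}(\R^N)$, set $L := [f]_{N^{s,p}(\R^N)} < \infty$, and split the supremum defining $[f]_{B_{p,\infty}^s}$ into a small-$h$ part and a large-$h$ part. By the very definition of $\limsup$, there exists $\delta > 0$ such that
\[
\Phi(h) \;\leq\; L + 1 \qquad \text{for all } h \text{ with } 0 < |h| < \delta.
\]
For $|h| \geq \delta$, the trivial triangle inequality $\|\tau_h f - f\|_{L^p} \leq 2\|f\|_{L^p}$ gives
\[
\Phi(h) \;\leq\; \frac{2\|f\|_{L^p(\R^N)}}{\delta^s}.
\]
Combining the two estimates yields $[f]_{B_{p,\infty}^s} \leq (L+1) + 2\delta^{-s}\|f\|_{L^p} < \infty$, whence $f \in B_{p,\infty}^s(\R^N)$.

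There is no real obstacle here; the only point deserving attention is the case $p = \infty$, where translations fail to be continuous in $L^\infty$ and thus $\Phi(h)$ need not tend to $0$ as $|h|\to 0$ — but this is irrelevant, since the limsup and the triangle inequality remain valid for any $f \in L^\infty(\R^N)$. Notice that the argument does \emph{not} provide a norm equivalence between $[\,\cdot\,]_{N^{s,p}}$ and $[\,\cdot\,]_{B_{p,\infty}^s}$, since the threshold $\delta$ depends on $f$ through its rate of convergence in the limsup; this is consistent with, and indeed explains, the strict inequality displayed in \eqref{NBkk} of the introduction, and motivates the choice in the forthcoming sections to work with suprema rather than with limits superior.
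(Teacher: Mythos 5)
Your proof is correct and follows essentially the same strategy as the paper's: the inclusion $B_{p,\infty}^s\subseteq N^{s,p}$ from the tautological comparison of $\sup$ and $\limsup$, and the reverse inclusion by splitting $\sup_{h\ne 0}\Phi(h)$ into the region $0<|h|<\delta$ (controlled via the definition of $\limsup$) and $|h|\ge\delta$ (controlled by the crude bound $\|\tau_h f-f\|_{L^p}\le 2\|f\|_{L^p}$). Your closing remark on the absence of a norm equivalence and its connection to \eqref{NBkk} is accurate and matches the paper's own commentary.
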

\begin{rem}
The equality here holds in the sense of sets: the topology of both are not precisely equivalent as shown below. In fact, "$[\cdot]_{N^{s,p}(\mathbb{R}^N)}$" is a quite crude way to characterize the Nikol'skii space. For these reasons (and in order not to mix with both topologies) we shall write $B_{p,\infty}^s(\mathbb{R}^N)=(B_{p,\infty}^s(\mathbb{R}^N),\left\|\cdot\right\|_{B_{p,\infty}^s(\mathbb{R}^N)})$ and $N^{s,p}(\mathbb{R}^N)=(B_{p,\infty}^s(\mathbb{R}^N),\left\|\cdot\right\|_{N^{s,p}(\mathbb{R}^N)})$.
\end{rem}
\begin{proof}
Let $f\in B_{p,\infty}^s(\mathbb{R}^N)$. Then, for all $\delta>0$, we have
\begin{align}
[f]_{B_{p,\infty}^s(\mathbb{R}^N)}:=\sup_{h\in\mathbb{R}^N\setminus\{0\}}\frac{\|\tau_hf-f\|_{L^p(\mathbb{R}^N)}}{|h|^s}\geq \sup_{0<|h|<\delta}\frac{\|\tau_hf-f\|_{L^p(\mathbb{R}^N)}}{|h|^s}. \nonumber
\end{align}
Letting $\delta\downarrow0$, we get
\begin{align}
[f]_{B_{p,\infty}^s(\mathbb{R}^N)}\geq  \limsup_{|h|\to0}\frac{\|\tau_hf-f\|_{L^p(\mathbb{R}^N)}}{|h|^s}=:[f]_{N^{s,p}(\mathbb{R}^N)}, \label{NversAA}
\end{align}
and so $f\in N^{s,p}(\mathbb{R}^N)$. Conversely, let $f\in N^{s,p}(\mathbb{R}^N)$. Then, for all $\eta>0$ there is a $\delta_0>0$ such that for all $\delta\in(0,\delta_0)$ we have
\[\left|\sup_{0<|h|<\delta}\frac{\|\tau_hf-f\|_{L^p(\mathbb{R}^N)}}{|h|^s}-[f]_{N^{s,p}(\mathbb{R}^N)}\right|<\eta.\]
Now fix such $\eta$ and $\delta$. By the triangle inequality we obtain
\[\sup_{0<|h|<\delta}\frac{\|\tau_hf-f\|_{L^p(\mathbb{R}^N)}}{|h|^s}<\eta+[f]_{N^{s,p}(\mathbb{R}^N)}<\infty.\]
On the other hand,
\[\sup_{\delta\leq|h|}\frac{\|\tau_hf-f\|_{L^p(\mathbb{R}^N)}}{|h|^s}\leq \frac{2}{\delta^s}\|f\|_{L^p(\mathbb{R}^N)}<\infty.\]
Therefore, $f\in B_{p,\infty}^s(\mathbb{R}^N)$.
\end{proof}
\begin{prop}\label{W1versA}
Let $s\in(0,1)$ and $p\in(1,\infty]$. Then,
\[W^{1,p}(\mathbb{R}^N)\subset N_0^{s,p}(\mathbb{R}^N)~~\text{ and  }~~BV(\mathbb{R}^N)\subset N_0^{s,1}(\mathbb{R}^N).\]
\end{prop}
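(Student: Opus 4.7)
The plan is to reduce everything to the classical translation inequality
\[
\|\tau_h f - f\|_{L^p(\R^N)} \leq |h| \, \|\nabla f\|_{L^p(\R^N)} \qquad \text{for } f \in W^{1,p}(\R^N),
\]
valid for all $p \in [1, \infty]$, together with its $BV$ counterpart
\[
\|\tau_h f - f\|_{L^1(\R^N)} \leq |h| \, [f]_{BV(\R^N)} \qquad \text{for } f \in BV(\R^N).
\]
Once these are in hand, dividing by $|h|^s$ gives an upper bound of order $|h|^{1-s}$, and since $s \in (0,1)$ this tends to $0$ as $|h| \to 0$, which is exactly what is needed to put $f$ into $N_0^{s,p}(\R^N)$ (resp.\ $N_0^{s,1}(\R^N)$).

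For the first inequality, I would first prove it for $f \in C_c^\infty(\R^N)$ via the identity $f(x+h) - f(x) = \int_0^1 \nabla f(x+th) \cdot h \, dt$ combined with Minkowski's integral inequality. For $p \in (1, \infty)$, the bound then extends to all of $W^{1,p}(\R^N)$ by the density of $C_c^\infty(\R^N)$ in $W^{1,p}(\R^N)$ and the continuity of both sides with respect to the $W^{1,p}$-norm. For the borderline $p = \infty$, one uses the identification of $W^{1,\infty}(\R^N)$ with $C^{0,1}(\R^N)$, so that $\|\tau_h f - f\|_{L^\infty(\R^N)} \leq [f]_{C^{0,1}(\R^N)}|h| = \|\nabla f\|_{L^\infty(\R^N)}|h|$ directly from the definition of the Lipschitz constant.

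For the $BV$ part, I would argue by approximation: take a standard mollifier $\phi_\eps$ and set $f_\eps := f \ast \phi_\eps \in C^\infty \cap W^{1,1}(\R^N)$, for which $\|\nabla f_\eps\|_{L^1(\R^N)} \leq [f]_{BV(\R^N)}$ (a standard property of the $BV$ seminorm under mollification). The $W^{1,1}$ inequality above then gives $\|\tau_h f_\eps - f_\eps\|_{L^1(\R^N)} \leq |h|\, [f]_{BV(\R^N)}$. Since $f_\eps \to f$ in $L^1(\R^N)$, passing to the limit $\eps \downarrow 0$ yields $\|\tau_h f - f\|_{L^1(\R^N)} \leq |h|\, [f]_{BV(\R^N)}$, and the same division-by-$|h|^s$ argument concludes.

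No step is particularly delicate here: the main subtlety is being careful in the $p = \infty$ case (where $C_c^\infty(\R^N)$ is not dense in $W^{1,\infty}(\R^N)$, so the density argument used for $p \in (1,\infty)$ must be replaced by the Lipschitz characterization) and in ensuring that the $BV$ approximation bound $\|\nabla f_\eps\|_{L^1(\R^N)} \leq [f]_{BV(\R^N)}$ is invoked rather than a version with a constant that blows up as $\eps \downarrow 0$. Note that the proposition excludes $p = 1$ in the first inclusion precisely because we only get $W^{1,1}(\R^N) \subset N_0^{s,1}(\R^N)$, which is subsumed by $BV(\R^N) \subset N_0^{s,1}(\R^N)$ since $W^{1,1}(\R^N) \hookrightarrow BV(\R^N)$.
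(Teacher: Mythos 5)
Your proof is correct and follows the same approach as the paper's: both reduce to the classical translation estimate $\|\tau_h f - f\|_{L^p(\R^N)} \leq |h|\,\|\nabla f\|_{L^p(\R^N)}$ (and its $BV$ analogue) and then observe that dividing by $|h|^s$ leaves an extra factor $|h|^{1-s} \to 0$. The paper simply asserts this estimate in a single line, whereas you supply the standard proof of it (FTC on $C_c^\infty$, density for $1<p<\infty$, the Lipschitz identification for $p=\infty$, and mollification for $BV$); your closing remark about why $p=1$ is subsumed in the $BV$ statement is also accurate.
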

\begin{proof}
First, let $f\in W^{1,p}(\mathbb{R}^N)$ (resp. $f\in BV(\R^N)$ if $p=1$). Then,
\[\frac{\|\tau_hf-f\|_{L^p(\mathbb{R}^N)}}{|h|^{s}}\leq |h|^{1-s}\|\nabla f\|_{L^p(\mathbb{R}^N)}, \quad \forall h\in\mathbb{R}^N.\]
Taking the limit superior as $|h|\to0$ gives $f\in N_0^{s,p}(\mathbb{R}^N)$.
\end{proof}
\begin{prop}\label{cooo}
Let $s\in(0,1)$, $p\in[1,\infty)$ and $\mathring{N}^{s,p}(\mathbb{R}^N)$ denote the closure of $C_c^\infty(\mathbb{R}^N)$ in $N^{s,p}(\mathbb{R}^N)$. Then,
\[\mathring{N}^{s,p}(\mathbb{R}^N)=N_0^{s,p}(\mathbb{R}^N).\]
In particular, $N_0^{s,p}(\mathbb{R}^N)$ is a closed subspace of $N^{s,p}(\mathbb{R}^N)$.
\end{prop}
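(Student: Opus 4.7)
The plan is to prove both inclusions separately, with all the real work concentrated on $N_0^{s,p}(\mathbb{R}^N) \subset \mathring{N}^{s,p}(\mathbb{R}^N)$.

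For the easy inclusion $\mathring{N}^{s,p}(\mathbb{R}^N) \subset N_0^{s,p}(\mathbb{R}^N)$, I first record that the seminorm $[\cdot]_{N^{s,p}(\mathbb{R}^N)}$ is subadditive: this follows at once from Minkowski's inequality applied to $\tau_h(f+g)-(f+g) = (\tau_h f - f) + (\tau_h g - g)$ combined with the subadditivity of $\limsup$. Since $C_c^\infty(\mathbb{R}^N) \subset W^{1,p}(\mathbb{R}^N)$ (and $\subset BV(\mathbb{R}^N)$ when $p=1$), Proposition \ref{W1versA} gives $[\varphi]_{N^{s,p}(\mathbb{R}^N)}=0$ for every $\varphi \in C_c^\infty(\mathbb{R}^N)$. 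If $(\varphi_n) \subset C_c^\infty(\mathbb{R}^N)$ converges to $f$ in $N^{s,p}(\mathbb{R}^N)$, the subadditivity yields
\[[f]_{N^{s,p}(\mathbb{R}^N)} \leq [f-\varphi_n]_{N^{s,p}(\mathbb{R}^N)} + [\varphi_n]_{N^{s,p}(\mathbb{R}^N)} \leq \|f-\varphi_n\|_{N^{s,p}(\mathbb{R}^N)} \to 0,\]
so $f \in N_0^{s,p}(\mathbb{R}^N)$. The very same argument, applied with $(\varphi_n)$ replaced by any approximating sequence in $N_0^{s,p}(\mathbb{R}^N)$, shows $N_0^{s,p}(\mathbb{R}^N)$ is closed in $N^{s,p}(\mathbb{R}^N)$, which settles the ``in particular'' clause.

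For the converse, given $f \in N_0^{s,p}(\mathbb{R}^N)$, I approximate $f$ in the natural topology via mollification followed by cutoff. Let $(\varphi_\delta)_{\delta>0} \subset C_c^\infty(\mathbb{R}^N)$ be a standard family of mollifiers and let $\chi_R \in C_c^\infty(\mathbb{R}^N)$ satisfy $0 \leq \chi_R \leq 1$, $\chi_R \equiv 1$ on $B_R$, and $|\nabla \chi_R| \leq C/R$. Set $g_\delta := f \ast \varphi_\delta$ and $f_{\delta,R} := g_\delta \chi_R \in C_c^\infty(\mathbb{R}^N)$. The commutation identity $\tau_h g_\delta - g_\delta = (\tau_h f - f) \ast \varphi_\delta$, together with Young's inequality, gives $\|\tau_h g_\delta - g_\delta\|_{L^p(\mathbb{R}^N)} \leq \|\tau_h f - f\|_{L^p(\mathbb{R}^N)}$, hence $[g_\delta]_{N^{s,p}(\mathbb{R}^N)} \leq [f]_{N^{s,p}(\mathbb{R}^N)} = 0$. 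Combined with the subadditivity above and the standard $L^p$-convergence of mollifiers (using $p<\infty$), this yields $\|f - g_\delta\|_{N^{s,p}(\mathbb{R}^N)} = \|f - g_\delta\|_{L^p(\mathbb{R}^N)} \to 0$ as $\delta \downarrow 0$.

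The remaining and most delicate step is to pass from $g_\delta$ to $f_{\delta,R}$ as $R \to \infty$. The $L^p$-convergence $\|g_\delta(1-\chi_R)\|_{L^p(\mathbb{R}^N)} \to 0$ is immediate from dominated convergence, so the main obstacle is to control the seminorm of the truncated remainder $g_\delta(1-\chi_R)$. The key observation is that $g_\delta(1-\chi_R)$ actually lies in $W^{1,p}(\mathbb{R}^N)$: since
\[\nabla[g_\delta(1-\chi_R)] = (1-\chi_R)\nabla g_\delta - g_\delta \nabla \chi_R,\]
with $\nabla g_\delta = f \ast \nabla \varphi_\delta \in L^p(\mathbb{R}^N)$ by Young, and $\nabla \chi_R$ bounded with compact support, both terms sit in $L^p(\mathbb{R}^N)$. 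Proposition \ref{W1versA} then forces $[g_\delta(1-\chi_R)]_{N^{s,p}(\mathbb{R}^N)} = 0$, so $\|g_\delta - f_{\delta,R}\|_{N^{s,p}(\mathbb{R}^N)} = \|g_\delta(1-\chi_R)\|_{L^p(\mathbb{R}^N)} \to 0$. A diagonal extraction in $(\delta, R)$ then produces a sequence $f_{\delta_n, R_n} \in C_c^\infty(\mathbb{R}^N)$ converging to $f$ in $N^{s,p}(\mathbb{R}^N)$, which completes the proof.
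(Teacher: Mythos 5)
Your proof is correct and rests on the same key observation as the paper's: subadditivity of $[\cdot]_{N^{s,p}}$ together with $[f]_{N^{s,p}}=0$ and $[\varphi]_{N^{s,p}}=0$ for every $\varphi\in C_c^\infty(\R^N)$ (Proposition \ref{W1versA}) forces $[f-\varphi]_{N^{s,p}}=0$, so convergence in $N^{s,p}(\R^N)$ reduces to convergence in $L^p(\R^N)$. The difference is that you build the approximating sequence by hand via mollification and cutoff, carefully verifying at each stage that the seminorm of the remainder vanishes; the paper instead takes an \emph{arbitrary} sequence $(\varphi_n)\subset C_c^\infty(\R^N)$ with $\|f-\varphi_n\|_{L^p(\R^N)}\to 0$ (which exists since $p<\infty$) and notes that the seminorm bound is then automatic. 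Your approach effectively re-derives the density of $C_c^\infty$ in $L^p$ inside the argument, which is logically sound but more work than the paper needs: once the seminorm is known to vanish on all of $C_c^\infty(\R^N)$, tracking it through the specific mollification and cutoff is redundant. Everything else — the easy inclusion, the subadditivity, and the closedness of $N_0^{s,p}(\R^N)$ — matches the paper.
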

\begin{proof}
"$\subset$": By definition, $C_c^\infty(\mathbb{R}^N)$ is dense in $\mathring{N}^{s,p}(\mathbb{R}^N)$, whence the inclusion $\mathring{N}^{s,p}(\mathbb{R}^N)\subset N_0^{s,p}(\mathbb{R}^N)$ is straightforward. \\

\noindent "$\supset$": Let $f\in N_0^{s,p}(\R^N)$ and let $(f_n)_{n\geq0}\subset C_c^\infty(\R^N)$ be such that
$$ \|f-f_n\|_{L^p(\R^N)}\to0 \quad \text{ as }~~n\to\infty. $$
Then, clearly,
\begin{align}
\|f-f_n\|_{N^{s,p}(\R^N)}&:=\|f-f_n\|_{L^p(\R^N)}+[f-f_n]_{N^{s,p}(\R^N)} \nonumber \\
&\leq \|f-f_n\|_{L^p(\R^N)}+[f]_{N^{s,p}(\R^N)}+[f_n]_{N^{s,p}(\R^N)} \nonumber \\
&=\|f-f_n\|_{L^p(\R^N)}\longrightarrow0~~~~\text{as}~~~~n\to\infty. \nonumber
\end{align}
Whence, $f\in\mathring{N}^{s,p}(\R^N)$. Moreover, the map
\[\Theta:f\in N^{s,p}(\mathbb{R}^N)\mapsto [f]_{N^{s,p}(\mathbb{R}^N)}\]
is continuous. Therefore $N_0^{s,p}(\mathbb{R}^N)=\Theta^{-1}(\{0\})$ is closed in $N^{s,p}(\mathbb{R}^N)$.
\end{proof}
\begin{prop}\label{completion}
Let $s\in(0,1)$, $p\in[1,\infty)$ and $\mathring{B}_{p,\infty}^s(\mathbb{R}^N)$ (resp. $\mathring{N}^{s,p}(\mathbb{R}^N)$) denote the closure of $C_c^\infty(\mathbb{R}^N)$ in $B_{p,\infty}^s(\mathbb{R}^N)$ (resp. $N^{s,p}(\mathbb{R}^N)$). Then,
\[f\in\mathring{N}^{s,p}(\mathbb{R}^N)~~\text{  if, and only if,  }~~f\in\mathring{B}_{p,\infty}^s(\mathbb{R}^N).\]
\end{prop}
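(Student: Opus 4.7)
The inclusion $\mathring{B}_{p,\infty}^s(\R^N)\subset\mathring{N}^{s,p}(\R^N)$ is immediate from \eqref{NversAA}: the $N^{s,p}$-norm is dominated by the Besov norm, so any $C_c^\infty$-sequence Cauchy in the latter is Cauchy in the former with the same limit. For the converse I would take $f\in\mathring{N}^{s,p}(\R^N)$ and, using Proposition \ref{cooo}, reduce to the case $[f]_{N^{s,p}(\R^N)}=0$, that is $\|\tau_hf-f\|_{L^p(\R^N)}=o(|h|^s)$ as $|h|\to 0$. The plan is then to produce $C_c^\infty$-approximants of $f$ in the Besov norm by first mollifying and then truncating.

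For the mollification step, set $f_\delta:=f\ast\rho_\delta$ with a standard smooth mollifier $(\rho_\delta)_{\delta>0}$. The key observation is the identity
\[\tau_h(f-f_\delta)-(f-f_\delta)=(\tau_hf-f)-(\tau_hf-f)\ast\rho_\delta,\]
which gives $\|\tau_h(f-f_\delta)-(f-f_\delta)\|_{L^p(\R^N)}\leq 2\|\tau_hf-f\|_{L^p(\R^N)}$ \emph{uniformly in} $h$ and $\delta$. Given $\eta>0$, I would pick $\delta_0>0$ so that $\|\tau_hf-f\|_{L^p(\R^N)}\leq\eta|h|^s/2$ whenever $|h|\leq\delta_0$: for $|h|\leq\delta_0$ the ratio $\|\tau_h(f-f_\delta)-(f-f_\delta)\|_{L^p}/|h|^s$ is then $\leq\eta$, while for $|h|>\delta_0$ the same ratio is at most $2\|f-f_\delta\|_{L^p}/\delta_0^s$, which falls below $\eta$ once $\delta$ is small enough because $f_\delta\to f$ in $L^p(\R^N)$ (using $p<\infty$). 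Together with $\|f-f_\delta\|_{L^p}\to 0$ this shows $f_\delta\to f$ in $B_{p,\infty}^s(\R^N)$.

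For the truncation step, fix $\delta$ and pick $\chi\in C_c^\infty(\R^N)$ with $\chi\equiv 1$ on $B_1$; set $\chi_R(x):=\chi(x/R)$. A direct computation using $\|\nabla\chi_R\|_\infty\lesssim 1/R$ and dominated convergence yields $\|(1-\chi_R)f_\delta\|_{L^p}+\|\nabla[(1-\chi_R)f_\delta]\|_{L^p}\to 0$ as $R\to\infty$, and the elementary bound $\|\tau_hu-u\|_{L^p}\leq\min\bigl(|h|\|\nabla u\|_{L^p},\,2\|u\|_{L^p}\bigr)$, split at $|h|=1$, then gives $[(1-\chi_R)f_\delta]_{B_{p,\infty}^s(\R^N)}\leq\max\bigl(\|\nabla[(1-\chi_R)f_\delta]\|_{L^p},\,2\|(1-\chi_R)f_\delta\|_{L^p}\bigr)\to 0$. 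A standard diagonal extraction $(\chi_{R_n}f_{\delta_n})\subset C_c^\infty(\R^N)$ will then converge to $f$ in $B_{p,\infty}^s(\R^N)$, showing $f\in\mathring{B}_{p,\infty}^s(\R^N)$. The main obstacle lies in the mollification step: because the Besov norm involves a supremum over \emph{all} $h$ whereas $[f]_{N^{s,p}}=0$ is only an infinitesimal condition, it is not a priori clear that mollification bridges the two topologies; the displayed identity is what makes it work, transferring small-scale smallness from $f$ to $f-f_\delta$ uniformly in $\delta$, while the large-$h$ regime is handled automatically by the trivial $L^p$-bound.
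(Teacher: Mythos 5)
Your proof is correct, but for the nontrivial inclusion $\mathring{N}^{s,p}(\R^N)\subset\mathring{B}_{p,\infty}^s(\R^N)$ it follows a genuinely different route from the paper. The paper keeps the approximating sequence $(f_n)\subset C_c^\infty(\R^N)$ furnished by the hypothesis $f_n\to f$ in $N^{s,p}(\R^N)$ and shows that this \emph{same} sequence converges in $B_{p,\infty}^s(\R^N)$, by splitting $\sup_{h\neq0}$ into $\sup_{0<|h|<\delta}$ (controlled by the $N^{s,p}$-seminorm of $f-f_n$) and $\sup_{|h|\geq\delta}$ (controlled by $\frac{2}{\delta^s}\|f-f_n\|_{L^p}$). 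You instead invoke Proposition \ref{cooo} to translate $f\in\mathring{N}^{s,p}(\R^N)$ into the pointwise condition $\|\tau_hf-f\|_{L^p(\R^N)}=o(|h|^s)$ and then \emph{construct} approximants from scratch by mollification and truncation; the identity $\tau_h(f-f_\delta)-(f-f_\delta)=(\tau_hf-f)-(\tau_hf-f)\ast\rho_\delta$, Young's inequality, and the $W^{1,p}$-bound on the cut-off error all check out, and the restriction $p<\infty$ is used exactly where needed ($f\ast\rho_\delta\to f$ in $L^p$ and $[f_n]_{N^{s,p}}=0$ for smooth compactly supported $f_n$). The paper's argument is shorter and shows the slightly stronger fact that convergence of a smooth sequence in the (weaker) $N^{s,p}$-topology upgrades automatically to convergence in $B_{p,\infty}^s(\R^N)$; your argument is constructive, exhibits explicit approximants, and in fact directly proves the inclusion $N_0^{s,p}(\R^N)\subset\mathring{B}_{p,\infty}^s(\R^N)$, from which all three identifications $\mathring{B}_{p,\infty}^s(\R^N)=\mathring{N}^{s,p}(\R^N)=N_0^{s,p}(\R^N)$ follow at once via \eqref{NversAA}.
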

\begin{proof}
Let $f\in \mathring{N}^{s,p}(\mathbb{R}^N)$ and $(f_n)_{n\geq0}\subset C_c^\infty(\mathbb{R}^N)$ be such that
\[f_n\to f \quad \text{ in }~~N^{s,p}(\mathbb{R}^N)~~\text{ as }~~n\to\infty.\]
Thus, for all $\eta>0$ there exists $n_0=n_0(\eta)\geq0$ and $\delta_0=\delta_0(\eta)>0$ such that
\[n\geq n_0,~~\delta\in(0,\delta_0)~~\Rightarrow~~\sup_{|h|<\delta}\frac{\|\Delta_h^1(f-f_n)\|_{L^p(\mathbb{R}^N)}}{|h|^s}<\eta.\]
Now, fix such $\eta$, $\delta$ and $n_0$. On the other hand, for all $\eta>0$ and all $\delta>0$ there is a $n_1=n_1(\eta,\delta)\geq0$ such that
\[n\geq n_1~~\Rightarrow \sup_{|h|\geq\delta}\frac{\|\Delta_h^1(f-f_n)\|_{L^p(\mathbb{R}^N)}}{|h|^s}<\eta.\]
Indeed, this is because
\[\sup_{|h|\geq\delta}\frac{\|\Delta_h^1(f-f_n)\|_{L^p(\mathbb{R}^N)}}{|h|^s}\leq \frac{2}{\delta^s}\|f-f_n\|_{L^p(\mathbb{R}^N)}\to0 \quad \text{ as }~~n\to\infty.\]
Therefore, for all $n\geq\max\{n_0,n_1\}$,
\begin{align}
\sup_{h\ne0}\frac{\|\Delta_h^1(f-f_n)\|_{L^p(\mathbb{R}^N)}}{|h|^s}<\eta.
\end{align}
Summing up, we find that, for all $\eta>0$, there exists $M\geq0$ such that
\[n\geq M~~\Rightarrow~~[f-f_n]_{B_{p,\infty}^s(\mathbb{R}^N)}<\eta.\]
Thus, $f\in\mathring{B}_{p,\infty}^s(\mathbb{R}^N)$. \\

Conversely, let $f\in\mathring{B}_{p,\infty}^s(\mathbb{R}^N)$ and $(f_n)_{n\geq0}\subset C_c^\infty(\mathbb{R}^N)$ be such that $f_n\to f$ in $B_{p,\infty}^s(\mathbb{R}^N)$. Using \eqref{NversAA} we find
\begin{align}
[f]_{N^{s,p}(\mathbb{R}^N)}&\leq [f-f_n]_{N^{s,p}(\mathbb{R}^N)}+[f_n]_{N^{s,p}(\mathbb{R}^N)} \nonumber \\
&=[f-f_n]_{N^{s,p}(\mathbb{R}^N)} \nonumber \\
&\leq [f-f_n]_{B_{p,\infty}^s(\mathbb{R}^N)}\to 0~~\text{as}~~n\to\infty. \nonumber
\end{align}
Thus $f\in \mathring{N}^{s,p}(\mathbb{R}^N)$.
\end{proof}

\section{Characterization of Besov-Nikol'skii spaces}\label{sectionNIKOLSKII}

\subsection{Preliminary}

For the sake of clarity we shall introduce the following short notation
\[\mathscr{D}_\omega(\rho_\eps,f):=\int_{\R^N}\rho_\eps(h)~\omega\left(\frac{\|\Delta_h^Mf\|_{L^p(\R^N)}}{|h|^s}\right)\mathrm{d}h.\]
First, an easy observation.
\begin{prop}\label{cotefacile}
Let $M\in\N^\ast$, $s>0$, $p\in[1,\infty]$ and $(\rho_\varepsilon)_{\varepsilon>0}$ be a sequence of mollifiers. Assume $\omega\in C_{\mathrm{inc}}^+$. Then,
\begin{align}
\limsup_{\eps\downarrow0}\,\mathscr{D}_\omega(\rho_\eps,f)\leq ~\omega\left(\limsup_{|h|\to0}\frac{\|\Delta_h^Mf\|_{L^p(\R^N)}}{|h|^s}\right), \label{Alim}
\end{align}
and
\begin{align}
\sup_{\eps>0}\,\mathscr{D}_\omega(\rho_\eps,f)\leq ~\omega\left(\sup_{h\ne0}\frac{\|\Delta_h^Mf\|_{L^p(\R^N)}}{|h|^s}\right). \label{Asup}
\end{align}
\end{prop}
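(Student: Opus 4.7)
The inequality \eqref{Asup} follows immediately from monotonicity of $\omega$ and the fact that $\rho_\eps$ is a probability density: the integrand is bounded pointwise a.e.\ by $\omega\bigl(\sup_{h\ne0}\|\Delta_h^Mf\|_{L^p(\R^N)}/|h|^s\bigr)$, and $\int_{\R^N}\rho_\eps=1$ does the rest. The roughly subadditive property of $\omega$ plays no role here; only monotonicity is needed.

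For \eqref{Alim}, set $L:=\limsup_{|h|\to0}\|\Delta_h^Mf\|_{L^p(\R^N)}/|h|^s$. The case $L=\infty$ is trivial, so I assume $L<\infty$. Given $\eta>0$, pick $\delta=\delta(\eta)>0$ such that $\|\Delta_h^Mf\|_{L^p(\R^N)}/|h|^s\le L+\eta$ for every $0<|h|<\delta$, and split the integral defining $\mathscr{D}_\omega(\rho_\eps,f)$ at the threshold $|h|=\delta$. On the inner region $\{|h|<\delta\}$, monotonicity of $\omega$ together with $\int\rho_\eps\le 1$ bounds the contribution by $\omega(L+\eta)$. On the outer region $\{|h|\ge\delta\}$, the crude inequality $\|\Delta_h^Mf\|_{L^p(\R^N)}\le 2^M\|f\|_{L^p(\R^N)}$, immediate from \eqref{iterateddiff2} and the triangle inequality and valid for every $p\in[1,\infty]$, yields the uniform-in-$h$ estimate
\[\frac{\|\Delta_h^Mf\|_{L^p(\R^N)}}{|h|^s}\le\frac{2^M\|f\|_{L^p(\R^N)}}{\delta^s},\]
so by monotonicity of $\omega$,
\[\int_{|h|\ge\delta}\rho_\eps(h)\,\omega\!\left(\frac{\|\Delta_h^Mf\|_{L^p(\R^N)}}{|h|^s}\right)\mathrm{d}h\le\omega\!\left(\frac{2^M\|f\|_{L^p(\R^N)}}{\delta^s}\right)\int_{|h|\ge\delta}\rho_\eps(h)\,\mathrm{d}h,\]
which tends to $0$ as $\eps\downarrow0$ by the concentration property in \eqref{molli} (note that $\delta$ has been frozen before letting $\eps\downarrow0$).

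Collecting the two contributions gives $\limsup_{\eps\downarrow0}\mathscr{D}_\omega(\rho_\eps,f)\le\omega(L+\eta)$ for every $\eta>0$, and continuity of $\omega$ (built into $C_{\mathrm{inc}}$) then yields \eqref{Alim} upon sending $\eta\downarrow0$. No serious obstacle arises: the proof is essentially a splitting argument exploiting the concentration of $\rho_\eps$ near the origin together with the $L^p$-boundedness of finite differences; the only point demanding a line of care is to fix the order of limits ($\eta$ chosen first, then $\delta$, then $\eps\downarrow0$, then $\eta\downarrow0$) so that the outer contribution vanishes for each fixed $\delta$.
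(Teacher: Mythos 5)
Your proof is correct and follows essentially the same route as the paper's: both split the integral at a fixed radius, bound the inner contribution via monotonicity of $\omega$ and the unit mass of $\rho_\eps$, and kill the outer contribution using the crude bound $\|\Delta_h^Mf\|_{L^p(\R^N)}\le 2^M\|f\|_{L^p(\R^N)}$ together with the concentration property in \eqref{molli}. The only cosmetic difference is that you introduce $L+\eta$ up front whereas the paper keeps $\sup_{0\le|h|\le\eta}\omega(\cdot)$ and passes $\eta\downarrow0$ at the end; both are valid and, as you note, only monotonicity and continuity of $\omega$ (not rough subadditivity) are actually used.
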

\begin{proof}
Let $\eta>0$ be any fixed number. Then, we have
\begin{align}
\mathscr{D}_\omega(\rho_\eps,f)= \left(\int_{0\leq|h|\leq\eta}+\int_{|h|>\eta}\right)\rho_\varepsilon(h)~\omega\left(\frac{\|\Delta_h^Mf\|_{L^p(\mathbb{R}^N)}}{|h|^{s}}\right)\mathrm{d}h. \nonumber
\end{align}
On the one hand,
\begin{align}
\int_{0\leq|h|\leq\eta}\rho_\varepsilon(h)~\omega&\left(\frac{\|\Delta_h^Mf\|_{L^p(\mathbb{R}^N)}}{|h|^{s}}\right)\mathrm{d}h \nonumber \\
&\leq \sup_{0\leq|h|\leq\eta}~\omega\left(\frac{\|\Delta_h^Mf\|_{L^p(\mathbb{R}^N)}}{|h|^{s}}\right)\int_{0\leq|h|\leq\eta}\rho_\varepsilon(h)\mathrm{d}h \nonumber \\
&\leq \sup_{0\leq|h|\leq\eta}~\omega\left(\frac{\|\Delta_h^Mf\|_{L^p(\mathbb{R}^N)}}{|h|^{s}}\right). \nonumber
\end{align}
On the other hand, since $\omega$ is non-decreasing
\begin{align}
\int_{|h|>\eta}\rho_\varepsilon(h)~\omega\left(\frac{\|\Delta_h^Mf\|_{L^p(\mathbb{R}^N)}}{|h|^{s}}\right)\mathrm{d}h&\leq \omega\left(\frac{2^{M}\|f\|_{L^p(\mathbb{R}^N)}}{\eta^{s}}\right)\int_{|h|>\eta}\rho_\varepsilon(h)\mathrm{d}h \nonumber \\
&\longrightarrow0~~\text{as}~~\varepsilon\downarrow0. \nonumber
\end{align}
Therefore,
\[\limsup_{\varepsilon\downarrow0}\int_{\mathbb{R}^N}\rho_\varepsilon(h)~\omega\left(\frac{\|\Delta_h^Mf\|_{L^p(\mathbb{R}^N)}}{|h|^{s}}\right)\mathrm{d}h\leq \sup_{0\leq|h|\leq\eta}~\omega\left(\frac{\|\Delta_h^Mf\|_{L^p(\mathbb{R}^N)}}{|h|^{s}}\right).\]
Taking now the limit as $\eta\downarrow0$ and using $\omega\in C_{\mathrm{inc}}^+$ we obtain
\[
\limsup_{\varepsilon\downarrow0}\,\mathscr{D}_\omega(\rho_\eps,f) \leq \omega\left(\limsup_{|h|\to0}\frac{\|\Delta_h^Mf\|_{L^p(\mathbb{R}^N)}}{|h|^{s}}\right).
\]
The remaining inequality follows by a direct application of H\"older's inequality.
\end{proof}
Here is another estimate we shall need.
\begin{lemma}\label{iteratedtrans}
Let $p\in[1,\infty]$, $M\in\N^\ast$, $h_1,h_2\in\R^N$ and $h=h_1+h_2$. Then,
\[\|\Delta_h^{2M}f\|_{L^p(\mathbb{R}^N)}\lesssim \|\Delta_{h_1}^{M}f\|_{L^p(\mathbb{R}^N)}+\|\Delta_{h_2}^{M}f\|_{L^p(\mathbb{R}^N)},\]
for all $f\in L^p(\R^N)$.
\end{lemma}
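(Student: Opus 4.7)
The plan is to exploit the operator structure of the finite differences. Writing $T_h$ for the translation operator $T_h f = f(\cdot + h)$, we have $\Delta_h = T_h - I$ and, because $T_{h_1+h_2} = T_{h_1}T_{h_2}$, the algebraic identity
\[
\Delta_h \;=\; T_{h_1}T_{h_2} - I \;=\; T_{h_1}(T_{h_2}-I) + (T_{h_1}-I) \;=\; T_{h_1}\Delta_{h_2} + \Delta_{h_1}.
\]
The two summands on the right commute (translations commute with each other and with difference operators), so the binomial theorem applies cleanly to $\Delta_h^{2M}$.

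First I would expand, using commutativity,
\[
\Delta_h^{2M} \;=\; \sum_{k=0}^{2M}\binom{2M}{k}\,T_{k h_1}\,\Delta_{h_2}^{k}\,\Delta_{h_1}^{2M-k}.
\]
The crucial observation is that for every index $k\in\{0,1,\dots,2M\}$ one has either $k\ge M$ or $2M-k\ge M$. In the first case I factor $\Delta_{h_2}^{k} = \Delta_{h_2}^{k-M}\Delta_{h_2}^{M}$, and in the second case I factor $\Delta_{h_1}^{2M-k} = \Delta_{h_1}^{2M-k-M}\Delta_{h_1}^{M}$; thus every term in the sum contains at least one factor of $\Delta_{h_1}^{M}$ or $\Delta_{h_2}^{M}$.

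To pass to $L^p$ norms I would use that $T_{kh_1}$ is an isometry on $L^p(\R^N)$ and that difference operators satisfy the trivial bound $\|\Delta_\eta g\|_{L^p}\le 2\|g\|_{L^p}$, together with the fact that all the operators involved commute, so one may freely move $\Delta_{h_1}^{M}f$ or $\Delta_{h_2}^{M}f$ to the innermost position. Each term is then majorised by $2^M\|\Delta_{h_1}^M f\|_{L^p(\R^N)}$ or $2^M\|\Delta_{h_2}^M f\|_{L^p(\R^N)}$, and summing over $k$ with the binomial weights yields
\[
\|\Delta_h^{2M}f\|_{L^p(\R^N)} \;\leq\; C_M\Bigl(\|\Delta_{h_1}^{M}f\|_{L^p(\R^N)} + \|\Delta_{h_2}^{M}f\|_{L^p(\R^N)}\Bigr),
\]
with $C_M$ depending only on $M$.

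The argument is essentially algebraic, and I do not anticipate a serious obstacle; the only point to handle carefully is to ensure commutativity before invoking the binomial theorem (which is immediate because translations form an abelian group), and the indexing bookkeeping showing that every exponent in the expansion produces a factor of order at least $M$ in one of the two directions.
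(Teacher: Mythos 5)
Your proof is correct, and it takes a genuinely different (and more elementary) route than the paper's. The paper works on the Fourier side: it identifies $\Delta_h^M f$ with $\mathcal{F}^{-1}[(e^{ih\cdot\xi}-1)^M\hat f]$, uses the identity $e^{ih\cdot\xi}-1 = e^{ih_1\cdot\xi}(e^{ih_2\cdot\xi}-1) + (e^{ih_1\cdot\xi}-1)$ to produce a polynomial decomposition $P(X,Y) = (X-1)^M Q_1 + X^M(Y-1)^M Q_2$, translates that back into a pointwise identity for $\Delta_h^{2M}f$ on Schwartz functions, and then invokes density to pass to general $f\in L^p$, with a separate remark for $p=\infty$ where density fails and one argues in $\mathscr{S}'$. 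Your argument bypasses Fourier analysis entirely: you use the operator identity $\Delta_{h_1+h_2} = T_{h_1}\Delta_{h_2} + \Delta_{h_1}$, observe that translations generate a commutative algebra, expand $\Delta_h^{2M}$ by the binomial theorem, and note that every term carries a factor $\Delta_{h_1}^M$ or $\Delta_{h_2}^M$ because $k\ge M$ or $2M-k\ge M$; the remaining $M$ difference operators and the translation are controlled by the crude bound $\|\Delta_\eta g\|_{L^p}\le 2\|g\|_{L^p}$ and isometry of translations. This has two advantages: it yields an explicit constant ($C_M = 2^{3M}$ works, since $\sum_k\binom{2M}{k}2^M = 2^{3M}$), and it holds verbatim for every $f\in L^p(\R^N)$ and every $p\in[1,\infty]$ without any density or distributional argument, which the paper has to add as an afterthought for $p=\infty$. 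The two proofs are really the same identity in disguise — your binomial expansion of $(T_{h_1}\Delta_{h_2}+\Delta_{h_1})^{2M}$ is the operator form of the paper's expansion of $P(X,Y)$ — but your presentation avoids the detour through $\mathscr{S}(\R^N)$ and the Fourier transform.
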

This is essentially covered by \cite[Estimate (16), p.112]{Triebel} but, for the sake of completeness, we choose to provide the details.
\begin{proof}
Let $f\in \mathscr{S}(\R^N)$. Since translations $\tau_hf$ have Fourier transform $e^{ih\cdot\xi}\hat{f}$, the Fourier transform of $\Delta_h^Mf$ writes
\begin{align}
\mathcal{F}[\Delta_h^Mf](\xi)=\hat{f}(\xi)\,\sum_{j=0}^M\binom{M}{j}(-1)^{M-j}(e^{ih\cdot\xi})^j. \nonumber
\end{align}
And so, by applying the binomial formula and taking the inverse Fourier transform of the result one gets
\[\Delta_h^Mf=\mathcal{F}^{-1}[(e^{ih\cdot\xi}-1)^M\hat{f}].\]
Now let $h_1,h_2\in\R^N$ and $h=h_1+h_2$. Notice that we have
\[e^{ih\cdot\xi}-1=e^{ih_1\cdot\xi}(e^{ih_2\cdot\xi}-1)+e^{ih_1\cdot\xi}-1.\]
Let $P\in\mathbb{C}[X,Y]$ be the polynomial defined by
\[P(X,Y)=\left(X(Y-1)+(X-1)\right)^{2M}.\]
By the binomial formula one may find $Q_1,Q_2\in\mathbb{C}[X,Y]$ such that
\[P(X,Y)=(X-1)^MQ_1(X,Y)+X^M(Y-1)^MQ_2(X,Y).\]
This holds for any $X,Y\in\mathbb{C}$. In particular
\[(e^{ih\cdot\xi}-1)^{2M}=(e^{ih_1\cdot\xi}-1)^MQ_1(e^{ih_1\cdot\xi},e^{ih_2\cdot\xi})+e^{iMh_1\cdot\xi}(e^{ih_2\cdot\xi}-1)^MQ_2(e^{ih_1\cdot\xi},e^{ih_2\cdot\xi}).\]
Multiplying this equality by $\hat{f}(\xi)$ and taking the inverse Fourier transform of the result, we obtain:
\begin{align}
\Delta_h^{2M}f&=\mathcal{F}^{-1}\left[\sum_{k,\ell=0}^{M}\alpha_{k,\ell}(e^{ih_1\cdot\xi}-1)^M\mathcal{F}\big[f(\cdot+kh_1+\ell h_2)\big]\right] \nonumber \\
& \quad +\mathcal{F}^{-1}\left[\sum_{k,\ell=0}^{M}\beta_{k,\ell}e^{iMh_1\cdot\xi}(e^{ih_2\cdot\xi}-1)^M\mathcal{F}\big[f(\cdot+kh_1+\ell h_2)\big]\right] \nonumber
\end{align}
where $\alpha_{k,\ell}$ and $\beta_{k,\ell}$ are the respective coefficients of $Q_1$ and $Q_2$. Otherwise said,
\begin{align}
\Delta_h^{2M}f&=\sum_{k,\ell=0}^M\alpha_{k,\ell}\Delta_{h_1}^Mf\big(\cdot+kh_1+\ell h_2\big)+\sum_{k,\ell=0}^M\beta_{k,\ell}\Delta_{h_2}^Mf\big(\cdot+(k+M)h_1+\ell h_2\big). \nonumber
\end{align}
We therefore obtain that, for each $f\in\mathscr{S}(\R^N)$
\begin{align}
\|\Delta_h^{2M}f\|_{L^p(\mathbb{R}^N)}\leq C\left(\|\Delta_{h_1}^{M}f\|_{L^p(\mathbb{R}^N)}+\|\Delta_{h_2}^{M}f\|_{L^p(\mathbb{R}^N)}\right), \label{iteratedest}
\end{align}
for some constant $C>0$ depending only on $M$, $Q_1$ and $Q_2$. Since $\mathscr{S}(\R^N)$ is dense in $L^p(\R^N)$ for $p<\infty$ the result follows for every $f\in L^p(\R^N)$. When $p=\infty$, the above still holds in the $\mathscr{S}'$ sense and, thus, extends to $L^\infty(\R^N)$ as well.
\end{proof}
Also, we recall the following well-known fact.
\begin{lemma}\label{2Morder}
Let $M\in\N^\ast$, $s\in(0,M)$ and $f\in L^p(\R^N)$. Then,
\begin{align}
\sup_{h\ne0}\frac{\|\Delta_h^Mf\|_{L^p(\R^N)}}{|h|^s}\leq C(s,M)\sup_{h\ne0}\frac{\|\Delta_h^{2M}f\|_{L^p(\R^N)}}{|h|^s}, \label{supsup}
\end{align}
for some constant $C(s,M)>0$ depending only on $s$ and $M$. Similarly,
\begin{align}
\limsup_{|h|\to0}\frac{\|\Delta_h^Mf\|_{L^p(\R^N)}}{|h|^s}\leq C(s,M)\limsup_{|h|\to0}\frac{\|\Delta_h^{2M}f\|_{L^p(\R^N)}}{|h|^s}. \label{lemmeSUP}
\end{align}
\end{lemma}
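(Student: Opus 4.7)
The plan is to establish a one-step Marchaud-type inequality and iterate it $M$ times. The starting point is the factorization $T_{2h}-I=(T_h+I)(T_h-I)$, which yields $\Delta_{2h}^j=(T_h+I)^j\Delta_h^j=\sum_{k=0}^j\binom{j}{k}T_{kh}\Delta_h^j$. Subtracting $2^j\Delta_h^j$ from both sides and applying the telescoping identity $T_{kh}g-g=\sum_{\ell=0}^{k-1}\Delta_h^1 g(\cdot+\ell h)$ to $g=\Delta_h^j f$ yields
\[
\Delta_{2h}^j f-2^j\Delta_h^j f=\sum_{k=1}^j\binom{j}{k}\sum_{\ell=0}^{k-1}\Delta_h^{j+1}f(\cdot+\ell h).
\]
Taking $L^p$-norms and using translation invariance, one obtains the key one-step inequality
\[
\|\Delta_h^j f\|_{L^p(\R^N)}\leq 2^{-j}\|\Delta_{2h}^j f\|_{L^p(\R^N)}+C_j\|\Delta_h^{j+1}f\|_{L^p(\R^N)},
\]
for some $C_j>0$ depending only on $j$. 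The argument is purely algebraic and works for every $p\in[1,\infty]$.

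Iterating along the dyadic scale $h,2h,4h,\ldots$ gives
\[
\|\Delta_h^j f\|_{L^p(\R^N)}\leq 2^{-Nj}\|\Delta_{2^N h}^j f\|_{L^p(\R^N)}+C_j\sum_{k=0}^{N-1}2^{-kj}\|\Delta_{2^k h}^{j+1}f\|_{L^p(\R^N)}.
\]
Since $\|\Delta_{2^N h}^j f\|_{L^p}\leq 2^j\|f\|_{L^p}$, the first term vanishes as $N\to\infty$. Under the assumption $B:=\sup_u\|\Delta_u^{j+1}f\|_{L^p}/|u|^s<\infty$ and provided $s<j$, the remaining series is geometric and sums to $\frac{C_j}{1-2^{s-j}}B|h|^s$, yielding the one-step comparison
\[
\sup_{h\ne0}\frac{\|\Delta_h^j f\|_{L^p(\R^N)}}{|h|^s}\lesssim \sup_{u\ne0}\frac{\|\Delta_u^{j+1}f\|_{L^p(\R^N)}}{|u|^s}.
\]
Applying this for $j=2M-1,2M-2,\ldots,M$ (each step requiring $s<j$, which is ensured by $s<M$) yields \eqref{supsup}.

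For the limsup statement \eqref{lemmeSUP}, the infinite sum in $k$ is split into two pieces. On the small-scale range $k<k_0$, with $k_0$ chosen so that $2^{k_0}|h|<\delta$ for a $\delta>0$ on which the limsup hypothesis $\|\Delta_u^{j+1}f\|_{L^p}\leq (B+\varepsilon)|u|^s$ already holds, the geometric summation proceeds as before and contributes a term of size $\lesssim(B+\varepsilon)|h|^s$. On the large-scale tail $k\ge k_0$, the trivial bound $\|\Delta_{2^k h}^{j+1}f\|_{L^p}\leq 2^{j+1}\|f\|_{L^p}$ produces a remainder of size $O((|h|/\delta)^j)=o(|h|^s)$ as $|h|\to0$. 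Passing first to $\limsup_{|h|\to0}$ and then $\varepsilon\downarrow0$ gives the one-step limsup comparison, which is then iterated over $j$ exactly as in the supremum case. The one real technical subtlety is this head/tail splitting; the rest of the argument is a direct transcription of Marchaud's inequality. Notably, because the whole proof is based only on the algebraic identity above and on finite-difference invariances, no density of smooth functions is invoked and the case $p=\infty$ is covered with no change.
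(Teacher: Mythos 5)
Your proof is correct. The one-step reduction you derive, $\|\Delta_h^j f\|_{L^p}\leq 2^{-j}\|\Delta_{2h}^j f\|_{L^p}+C_j\|\Delta_h^{j+1}f\|_{L^p}$, is exactly the paper's key identity: the paper obtains it from the polynomial factorization $(X-1)^M=2^{-M}(X^2-1)^M+(X-1)^{M+1}P(X)$ via Fourier symbols, while your operator identity $\Delta_{2h}^j=(T_h+I)^j\Delta_h^j$ plus telescoping is the same algebra written multiplicatively; both are the classical Marchaud inequality and both cover $p=\infty$. Where you genuinely diverge is in how the reduction is closed. The paper takes the supremum over $h$ on both sides and \emph{absorbs} the term $2^{s-M}\sup_h\|\Delta_h^Mf\|_{L^p}/|h|^s$ into the left-hand side; strictly speaking this step presupposes that this supremum is finite (otherwise one only gets $\infty\leq\infty$), and a careful reading requires first truncating to $\sup_{|h|>\eta}$, which is finite since $f\in L^p$, and then letting $\eta\downarrow0$. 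Your dyadic iteration with the vanishing remainder $2^{-Nj}\|\Delta_{2^Nh}^jf\|_{L^p}\to0$ and the geometric summation under $s<j$ sidesteps this issue entirely, and your head/tail splitting makes the limsup variant \eqref{lemmeSUP} fully explicit, whereas the paper only asserts that its proof is ``similar.'' So: same key lemma, but a cleaner and more self-contained closure on your side.
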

This is a consequence of \cite[Estimate (45), p.99]{Triebel}, but the proof being very short we chose to provide all the details.
\begin{proof}
Let $f\in L^p(\R^N)$ and $P\in\mathbb{C}[X]$ be the unique polynomial such that
\begin{align}
P(X)(X-1)=1-\left(\frac{X+1}{2}\right)^M. \label{polynome}
\end{align}
Note that $P$ exists because $X-1$ is a divisor of the right-hand side of \eqref{polynome}. In particular, we have that
\begin{align}
(X-1)^M=\frac{1}{2^M}(X^2-1)^M+(X-1)^{M+1}P(X).
\end{align}
Hence, for every $h,\xi\in\R^N$ we have
\begin{align}
(e^{ih\cdot\xi}-1)^M=\frac{1}{2^M}(e^{i2h\cdot\xi}-1)^M+(e^{ih\cdot\xi}-1)^{M+1}P(e^{ih\cdot\xi}).
\end{align}
Whence, reasoning as in Lemma \ref{iteratedtrans}, we obtain
\begin{align}
\Delta_h^Mf(x)=\frac{1}{2^M}\Delta_{2h}^Mf(x)+\Delta_h^{M+1}\bigg(\sum_{\ell\in L}~a_\ell f(x+h\ell)\bigg),
\end{align}
for some finite set of indices $L\subset\N$ and coefficients $a_\ell$ depending on $P$. Thus, for every $s\in(0,M)$, $h\ne0$ and $f\in L^p(\R^N)$ it holds,
\begin{align}
\frac{\|\Delta_h^Mf\|_{L^p(\R^N)}}{|h|^s}\leq \frac{1}{2^{M-s}}~\frac{\|\Delta_{2h}^Mf\|_{L^p(\R^N)}}{|2h|^s}+C~\frac{\|\Delta_h^{M+1}f\|_{L^p(\R^N)}}{|h|^s}.
\end{align}
We obtain that
\begin{align}
\left(1-\frac{1}{2^{M-s}}\right)\sup_{h\ne0}\frac{\|\Delta_h^Mf\|_{L^p(\R^N)}}{|h|^s}\leq C~\sup_{h\ne0}\frac{\|\Delta_h^{M+1}f\|_{L^p(\R^N)}}{|h|^s}.
\end{align}
Therefore \eqref{supsup} follows by induction. The proof of \eqref{lemmeSUP} is similar.
\end{proof}
~~\\

\subsection{Proof of Theorem \ref{THEO}} ~~ \\

Let $M\in\N^\ast$, $s>0$, $p\in[1,\infty]$, $\omega\in C_{\mathrm{inc}}^+$ and $(\rho_\eps)_{\eps>0}$ be as in the statement of Theorem \ref{THEO}. Here again, we will make use of the short notation
\begin{align}
\mathscr{D}_\omega(\rho_\eps,f):=\int_{\R^N}\rho_\eps(h)~\omega\left(\frac{\|\Delta_h^Mf\|_{L^p(\R^N)}}{|h|^s}\right)\mathrm{d}h.
\end{align}
In addition, we call $\mathfrak{M}(\mathbb{R}^N)$ the set of mollifiers $(\rho_\eps)_{\eps>0}\subset L^1(\R^N)$ satisfying \eqref{molli3} for some $\rho\in L^1(\R^N)$ such that there exists a number $\delta>0$ and a nonnegative, nondecreasing, radial function $\Psi\in C(\R^N)$ with
\begin{align}
\rho(h)\geq \Psi(h) \quad \text{ for a.e. }~~h\in B_\delta \quad \text{  and  } \quad \int_{B_{\delta/4}}\Psi>0. \label{gene}
\end{align}
\noindent Note that, by Proposition \ref{cotefacile}, we only need to establish a one-sided inequality.

We begin with a few preliminary facts (Claim A and Claim B) showing that the proof of Theorem \ref{THEO} reduces to the case where $(\rho_\eps)_{\eps>0}\in\mathfrak{M}(\R^N)$.\\

\noindent \textbf{Claim A.} \emph{It is enough to establish Theorem \ref{THEO} for radial} $\rho$\emph{'s} \emph{such that}
\begin{align}
\underset{\mathscr{A}}{\mathrm{ess}\,\mathrm{inf}}\,\rho>0 \qquad{\mbox{ \emph{for some annulus} }}~~\mathscr{A}\subset\R^N~~\mbox{\emph{centered at zero}}. \label{cdnA}
\end{align}
\begin{proof}[Proof of Claim A]
Let $\rho\in L^1(\R^N)$ be a nonnegative radial function with unit mass. Then, there is a nonnegative function $\tilde{\rho}\in L_{loc}^1(\R_+)$ with $\rho(z)=\tilde{\rho}(|z|)$. In particular, we may find some $0<c_1<c_2$ such that
$$ \int_{c_1}^{c_2}\tilde{\rho}(\theta)\mathrm{d}\theta>0. $$
Let $0<\theta_0<1$ be such that $c_1>c_2\theta_0$ and let $\rho^*$ be the radial function given by
$$ \rho^*(z):=C\fint_{\theta_0}^1\rho(\theta z)\mathrm{d}\theta=C\fint_{\theta_0}^1\tilde{\rho}(\theta |z|)\mathrm{d}\theta \qquad{\mbox{ for }}~~z\in\R^N, $$
where $C>0$ is given by
$$ C:=(1-\theta_0)\left(\int_{\theta_0}^1\frac{\mathrm{d}\theta}{\theta^N}\right)^{-1}. $$
Notice that, by Fubini, $\rho^*\in L^1(\R^N)$ and $\rho^*$ has unit mass. Indeed, this is because
\begin{align}
\|\rho^*\|_{L^1(\R^N)}=\frac{C}{1-\theta_0}\int_{\theta_0}^1\int_{\R^N}\rho(\theta z)\mathrm{d}z\mathrm{d}\theta=\frac{C}{1-\theta_0}\int_{\theta_0}^1\frac{\mathrm{d}\theta}{\theta^N}=1. \nonumber
\end{align}
Furthermore, one easily checks that $\rho^*$ satisfies \eqref{cdnA}. Indeed, we have
\begin{align}
\underset{|z|\in\left[c_2,\frac{c_1}{\theta_0}\right]}{\mathrm{ess}\,\mathrm{inf}}\,\,\rho^*(z)&=\frac{C}{1-\theta_0}\,\,\underset{|z|\in\left[c_2,\frac{c_1}{\theta_0}\right]}{\mathrm{ess}\,\mathrm{inf}}\,\int_{\theta_0|z|}^{|z|}\tilde{\rho}(\theta)\frac{\mathrm{d}\theta}{|z|}\geq \frac{C\theta_0}{c_1(1-\theta_0)}\int_{c_1}^{c_2}\tilde{\rho}(\theta)\mathrm{d}\theta>0. \nonumber
\end{align}
On the other hand, we have
\begin{align}
\rho_\eps(\theta\,\cdot)=\theta^{-N}\rho_{\eps/\theta}\leq\theta_0^{-N}\rho_{\eps/\theta} \qquad{\mbox{ for any }}~~\theta\in[\theta_0,1].  \nonumber
\end{align}
Hence,
\begin{align}
\frac{1}{C}\,\mathscr{D}_\omega(\rho_\eps^*,f)=\fint_{\theta_0}^1\mathscr{D}_\omega(\rho_\eps(\theta\,\cdot),f)\mathrm{d}\theta\leq \theta_0^{-N}\sup_{\theta_0\leq\theta\leq1}\,\mathscr{D}_\omega(\rho_{\eps/\theta},f). \label{ClaimA-limsup}
\end{align}
Assuming that Theorem \ref{THEO} holds for mollifiers with $\rho$ satisfying \eqref{cdnA}, we finally obtain
\begin{align}
\omega\left([f]_{B_{p,\infty}^s(\R^N)}\right)\lesssim\, \sup_{\eps>0}\,\mathscr{D}_\omega(\rho_\eps,f).  \nonumber
\end{align}
Thus, the claim follows.
\end{proof}

\noindent \textbf{Claim B.} \emph{It is enough to establish Theorem \ref{THEO} for} $(\rho_\eps)_{\eps>0}\in\mathfrak{M}(\R^N)$.

\begin{proof}[Proof of Claim B] Let $\rho\in L^1(\R^N)$ be a nonnegative radial function with unit mass. Then, there is a nonnegative function $\tilde{\rho}\in L_{loc}^1(\R_+)$ with $\rho(z)=\tilde{\rho}(|z|)$. On account of Claim A, we may assume that there are some $0<r_1<r_2$ and some $\alpha>0$ with
\begin{align}
\tilde{\rho}(t)\geq \alpha~\mathds{1}_{(r_1,r_2)}(t)=:\Psi(t) \qquad{\mbox{ for a.e. }}~~t\geq0.  \nonumber 
\end{align}
If $r_1<\frac{r_2}{4}$, then $(\rho_\eps)_{\eps>0}\in\mathfrak{M}(\R^N)$ and the claim is trivial. Hence, we may assume that $r_1\geq\frac{r_2}{4}$. To show that the latter case reduces to the former, we simply clip together rescaled copies of $\Psi$ as follows. For any $j\geq0$, define
$$ \theta_j:=\left(\frac{r_1}{r_2}\right)^j \quad \text{  and  } \quad \Psi_{\theta_j}(t):=\theta_j^{-N}\Psi\left(\frac{t}{\theta_j}\right). $$
Observe that, by construction, $\theta_j\to0$ as $j\to\infty$ and
$$ 0<\cdots<\theta_{j+1}r_1<\theta_{j+1}r_2=\theta_jr_1<\theta_jr_2<\cdots<\theta_1r_2=r_1<r_2. $$
Thus, the supports of the $\Psi_{\theta_j}$'s are mutually disjoint and they form a countable partition of $[0,r_2]$.
Now take an integer $k\in\N$ such that
\begin{align}
k>\frac{\ln\left(\frac{1}{5}\right)}{\ln\left(\frac{r_1}{r_2}\right)}.  \nonumber
\end{align}
By construction, this guarantees that $\theta_k<\frac{1}{5}$ and, in turn, that
$$ \mathrm{supp}(\Psi_{\theta_k})\subset \left[0,\frac{r_2}{5}\right]. $$
In particular, we have
$$ \bigg[\frac{r_2}{5},\,r_2\bigg)\subset\bigcup_{j=0}^{k}\mathrm{supp}(\Psi_{\theta_j}). $$
Fix such a $k\in\N$ and set $J=[\![0,k]\!]$. Then, the function
\begin{align}
\eta^\ast(t):=\sum_{j\in J}\Psi_{\theta_j}(t), \qquad{\mbox{ for }}~~t\geq0,  \nonumber
\end{align}
is bounded and
$$ \bigg[\frac{r_2}{5},\,r_2\bigg)\subset \mathrm{supp}(\eta^*) \subset [0,r_2]. $$
Moreover, $\eta^*$ satisfies the following monotonicity property:
\begin{align}
\eta^\ast(t_1)\geq \eta^\ast(t_2)\geq\alpha \quad \text{ whenever } \quad \frac{r_2}{5}<t_1<t_2<r_2.  \nonumber
\end{align}
Thus, there is a nondecreasing function $\Phi^*\in C(\R_+)$ with
\begin{align}
\eta^\ast\geq \Phi^* \quad \text{ a.e. in }~~[0,r_2] \quad \text{ and } \quad \int_0^{r_2/4}\Phi^*(t)\,t^{N-1}\mathrm{d}t>0. \label{K-1D}
\end{align}
\begin{figure}
\centering
\includegraphics[scale=0.57]{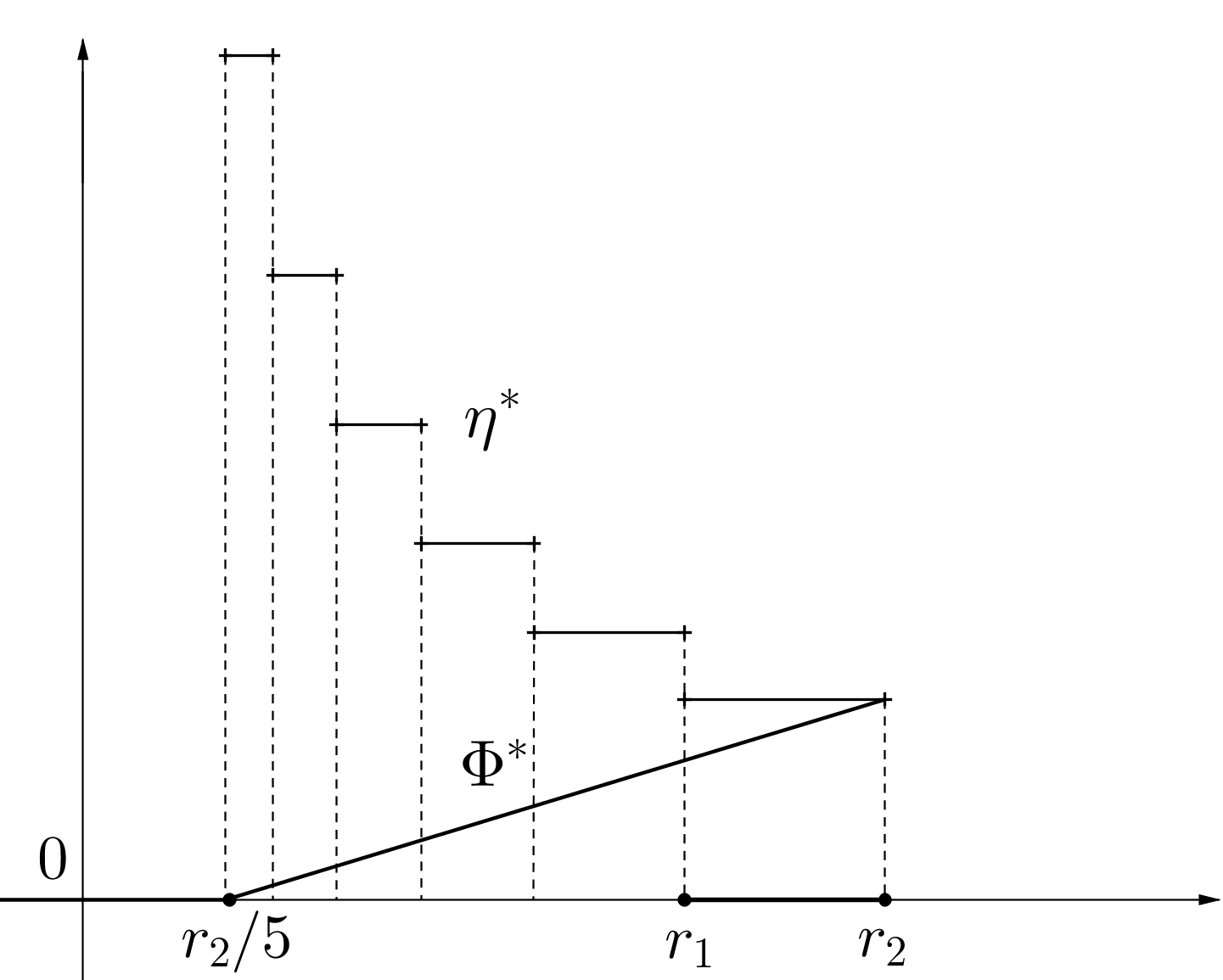} \\
\caption{Construction of $\eta^\ast$ and $\Phi^*$.} \label{fig1}
\end{figure}
Indeed, it suffices to take e.g.
\begin{align}
\Phi^*(t):=\frac{5\alpha}{4r_2}\bigg(t-\frac{r_2}{5}\bigg)~\mathds{1}_{(\frac{r_2}{5},\infty)}(t).  \nonumber
\end{align}
See Figure \ref{fig1} for a visual evidence. Now, set
\begin{align}
\Phi(x):=\frac{1}{c}\,\Phi^*(|x|) \quad \text{ and } \quad \eta(x):=\frac{1}{c}\,\eta^\ast(|x|)  \quad \text{  where  } \quad c:=\int_{B_{r_2}}\eta^\ast(|x|)\mathrm{d}x.  \nonumber
\end{align}
By construction, $\eta\in L^1(\R^N)$ and $\eta$ has unit mass. Moreover, by \eqref{K-1D} we have
$$ \eta\geq\Phi  \quad \text{ a.e. in }~~B_{r_2} \quad \text{ and } \quad \int_{B_{r_2/4}}\Phi>0. $$
Whence, $(\eta_\eps)_{\eps>0}\in\mathfrak{M}(\R^N)$. On the other hand,
\begin{align}
c~\mathscr{D}_\omega(\eta_\eps,f)&=\sum_{j\in J}\mathscr{D}_\omega\big(\Psi_{\theta_j\eps}(\left|\cdot\right|),f\big)\leq \sum_{j\in J} \mathscr{D}_\omega(\rho_{\theta_j\eps},f), \label{ClaimB-limsup}
\end{align}
Hence, one obtains
\begin{align}
\sup_{\eps>0}\,\mathscr{D}_\omega(\eta_\eps,f)\leq \frac{\# J}{c}\,\sup_{\eps>0}\,\mathscr{D}_\omega(\rho_{\eps},f). \nonumber
\end{align}
Assuming that Theorem \ref{THEO} holds for mollifiers belonging to $\mathfrak{M}(\R^N)$, we finally obtain
\begin{align}
\omega\left([f]_{B_{p,\infty}^s(\R^N)}\right)\lesssim\, \sup_{\eps>0}\,\mathscr{D}_\omega(\rho_\eps,f). \nonumber
\end{align}
Thus, the claim follows.
\end{proof}
\begin{rem}\label{ClaimAB-limsup}
By \eqref{ClaimA-limsup} and \eqref{ClaimB-limsup} we also have that
$$ \omega\left(\limsup_{|h|\to0}\frac{\|\Delta_h^{M}f\|_{L^p(\R^N)}}{|h|^{s}}\right)\lesssim\limsup_{\eps\downarrow0}\,\mathscr{D}_\omega(\rho_\eps,f), $$
holds for any $(\rho_\eps)_{\eps>0}$ satisfying \eqref{molli} and \eqref{molli3} whenever it holds for any $(\rho_\eps)_{\eps>0}$ belonging to $\mathfrak{M}(\R^N)$.
\end{rem}
We may now complete the proof of Theorem \ref{THEO}.
\vskip 0.3cm

\noindent \textsc{Step 1: case $M=1$ and $s\in(0,1)$.}
\vskip 0.3cm
Let $p\in[1,\infty]$, $(\rho_\eps)_{\eps>0}\in\mathfrak{M}(\R^N)$, $\omega\in C_{\mathrm{inc}}^+$ and $f\in L^p(\R^N)$ satisfying \eqref{LAMBDA}. Let $h\in \R^N$ (to be fixed later) and let $z\in\R^N$.
Then, we have
\begin{align}
\tau_z f-f=\tau_{h}f-f+\tau_{h}(\tau_{z-h}f-f). \label{labazz}
\end{align}
This implies
\begin{align}
\|\tau_hf-f\|_{L^p(\R^N)}\leq\|\tau_z f-f\|_{L^p(\R^N)}+\|\tau_{z-h}f-f\|_{L^p(\R^N)}. \label{estimationtrans}
\end{align}
Now, choose $z\in B_{|h|}(h)$. Then, since $z$ and $z-h$ belong to $B_{2|h|}$, it comes
\begin{align}
\frac{1}{2^s}\frac{\|\tau_hf-f\|_{L^p(\R^N)}}{|h|^s}\leq\frac{\|\tau_z f-f\|_{L^p(\R^N)}}{|z|^s}+\frac{\|\tau_{z-h}f-f\|_{L^p(\R^N)}}{|z-h|^s}. \label{stab}
\end{align}
Since $\omega$ is roughly subadditive, there exists a constant $A_\omega>0$ depending only on $\omega$ and such that, for every $x,y\in\R_+$,
\begin{align}
\omega(x+y)\leq A_\omega\left\{\omega(x)+\omega(y)\right\}. \label{subdefOmega}
\end{align}
\begin{rem}\label{puissanceOmegaSubad}
Note that \eqref{subdefOmega} implies $\omega(2x)\leq 2A_\omega \omega(x)$ and, since $\omega\in C_{\mathrm{inc}}^+$, it is increasing, thus $\omega(2^sx)\leq 2A_\omega \omega(x)$ for $s\leq 1$. Similarly, when $s\leq M\in\N^*$, one has $\omega(2^sx)\leq (2A_\omega)^M\omega(x)$.
\end{rem}
\noindent From \eqref{stab}, Remark \ref{puissanceOmegaSubad} and thanks to $s\leq 1$, using the short notation $\Delta_h^1f=\tau_hf-f$ we have
\begin{align}
\omega\left(\frac{\|\Delta_h^1f\|_{L^p(\R^N)}}{|h|^s}\right)&\leq \omega\left(2^s\frac{\|\Delta_z^1f\|_{L^p(\R^N)}}{|z|^s}+2^s\frac{\|\Delta_{z-h}^1f\|_{L^p(\R^N)}}{|z-h|^s}\right) \nonumber \\
&\leq A_\omega\left\{\omega\left(2^s\frac{\|\Delta_z^1f\|_{L^p(\R^N)}}{|z|^s}\right)+\omega\left(2^s\frac{\|\Delta_{z-h}^1f\|_{L^p(\R^N)}}{|z-h|^s}\right)\right\} \nonumber \\
&\leq 2A_\omega^2\left\{\omega\left(\frac{\|\Delta_z^1f\|_{L^p(\R^N)}}{|z|^s}\right)+\omega\left(\frac{\|\Delta_{z-h}^1f\|_{L^p(\R^N)}}{|z-h|^s}\right)\right\}. \label{diffinie}
\end{align}
Using $(\rho_\varepsilon)_{\varepsilon>0}\in\mathfrak{M}(\R^N)$ we know there is a radially nondecreasing $\Psi\in C(\R^N)$ and a number $\delta>0$ such that
\begin{align}
\rho_\varepsilon(z)\geq \Psi_\varepsilon(|z|), \quad \text{ for a.e.}~~z\in B_{\varepsilon\delta}~~\text{ and all }~~\eps>0. \label{rq1}
\end{align}
As seen in Figure \ref{fig2}, we clearly have
\begin{align}
\Psi_\varepsilon(|z-h|)\leq\Psi_\varepsilon(|z|), \quad \text{ for all}~~h\in B_{\varepsilon\delta/2},~~z\in B_{|h|/2}(h)~~\text{ and }~~\eps>0. \label{rq2}
\end{align}
Let $h\in B_{\varepsilon\delta/2}$. Multiplying \eqref{diffinie} by $\Psi_\varepsilon(|z-h|)$ and using \eqref{rq1}-\eqref{rq2} we obtain
\begin{align}
\Psi_\varepsilon&(|z-h|)~\omega\left(\frac{\|\Delta_{h}^1f\|_{L^p(\R^N)}}{|h|^s}\right) \nonumber \\
&\leq 2A_\omega^2\left\{\rho_\varepsilon(z)~\omega\left(\frac{\|\Delta_{z}^1f\|_{L^p(\R^N)}}{|z|^s}\right)+\rho_\varepsilon(z-h)~\omega\left(\frac{\|\Delta_{z-h}^1f\|_{L^p(\R^N)}}{|z-h|^s}\right)\right\}, \nonumber
\end{align}
and this holds for all $h\in B_{\varepsilon\delta/2}$ and a.e. $z\in B_{|h|/2}(h)$.
\begin{figure}
\centering
\includegraphics[scale=0.35]{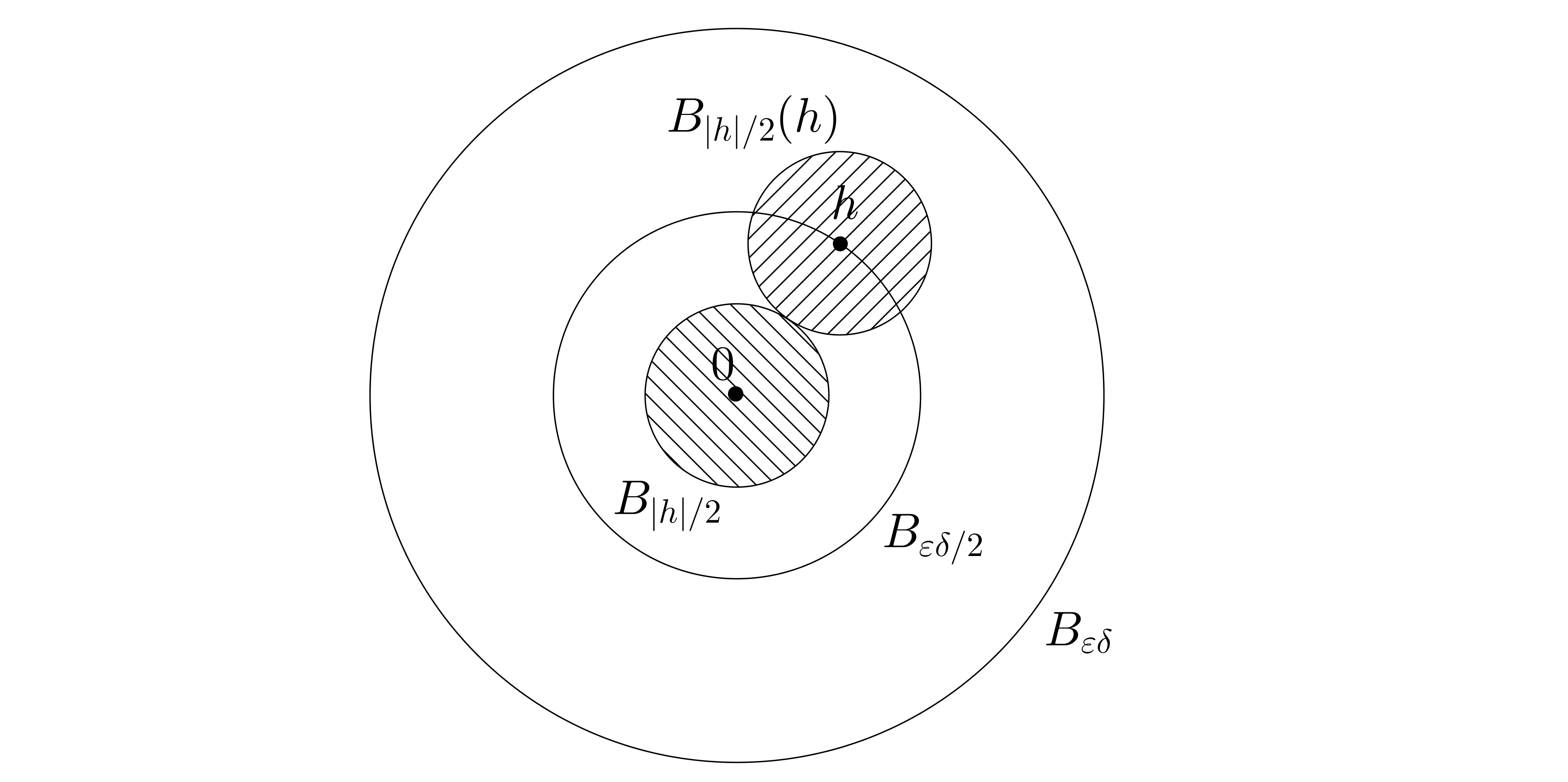} \\
\caption{Spatial conditions on $h$ and $z$.} \label{fig2}
\end{figure}
\noindent So, taking $|h|=\delta\varepsilon/2$ and integrating over $z\in B_{|h|/2}(h)$, yields:
\begin{align}
C(\Psi,\delta)~\omega\left(\frac{\|\Delta_{h}^1f\|_{L^p(\R^N)}}{|h|^{s}}\right)&\leq 2A_\omega^2\int_{B_{|h|/2}(h)}\rho_\varepsilon(z)~\omega\left(\frac{\|\Delta_{z}^1f\|_{L^p(\R^N)}}{|z|^{s}}\right)\mathrm{d}z \nonumber \\
&~~+2A_\omega^2\int_{B_{|h|/2}(h)}\rho_\varepsilon(z-h)~\omega\left(\frac{\|\Delta_{z-h}^1f\|_{L^p(\R^N)}}{|z-h|^{s}}\right)\mathrm{d}z \nonumber \\
& \leq 4A_\omega^2\int_{\R^N}\rho_\varepsilon(z)~\omega\left(\frac{\|\Delta_{z}^1f\|_{L^p(\R^N)}}{|z|^{s}}\right)\mathrm{d}z, \nonumber
\end{align}
where
\[C(\Psi,\delta):=\int_{B_{|h|/2}(h)}\Psi_\varepsilon(|z-h|)\mathrm{d}z=\int_{B_{\delta/4}}\Psi(|z|)\mathrm{d}z>0.\]
Whence,
\begin{align}
\omega\left(\frac{\|\Delta_{h}^1f\|_{L^p(\R^N)}}{|h|^{s}}\right)\leq \frac{4A_\omega^2}{C(\Psi,\delta)}\int_{\R^N}\rho_\varepsilon(z)~\omega\left(\frac{\|\Delta_{z}^1f\|_{L^p(\R^N)}}{|z|^{s}}\right)\mathrm{d}z. \label{controle}
\end{align}
Passing to the limit superior as $|h|\to0$ in \eqref{controle} it follows
\[\omega\big([f]_{N^{s,p}(\R^N)}\big)\leq \frac{4A_\omega^2}{C(\Psi,\delta)}\limsup_{\varepsilon\downarrow0}\int_{\R^N}\rho_\varepsilon(z)~\omega\left(\frac{\|\Delta_{z}^1f\|_{L^p(\R^N)}}{|z|^{s}}\right)\mathrm{d}z, \]
where we used the continuity of $\omega$. This, together with Proposition \ref{cotefacile} yields
\begin{align}
\omega\big([f]_{N^{s,p}(\R^N)}\big)\sim\limsup_{\varepsilon\downarrow0}\int_{\R^N}\rho_\varepsilon(z)~\omega\left(\frac{\|\Delta_{z}^1f\|_{L^p(\R^N)}}{|z|^{s}}\right)\mathrm{d}z. \label{limsup}
\end{align}
Similarly, taking the supremum over $h\ne0$ in \eqref{controle}, we obtain
$$ \omega\left([f]_{B_{p,\infty}^s(\R^N)}\right)\sim\,\sup_{\eps>0}\,\int_{\R^N}\rho_\eps(h)~\omega\left(\frac{\|\Delta_h^1f\|_{L^p(\R^N)}}{|h|^s}\right)\mathrm{d}h.$$
\begin{rem}
Estimate \eqref{limsup} together with Proposition \ref{completion} and Remark \ref{ClaimAB-limsup} prove Corollary \ref{approx} for $1\leq p<\infty$ and $s\in(0,1)$ (recall we have assumed $\omega(0)=0$).
\end{rem}
\vskip 0.3cm

\noindent \textsc{Step 2: case $M\geq2$ and $s\in(0,M)$.}
\vskip 0.3cm
The assumption $s\in(0,1)$ being artificial (by Remark \ref{puissanceOmegaSubad}) the above still holds for general $s>0$. In particular, replacing \eqref{estimationtrans} by the estimate of Lemma \ref{iteratedtrans}, for $f\in L^p(\R^N)$, one obtains
\begin{align}
\omega\left(\frac{\|\Delta_h^{2M}f\|_{L^p(\R^N)}}{|h|^{s}}\right)\leq C(M,\rho,\omega)\int_{\R^N}\rho_\varepsilon(z)~\omega\left(\frac{\|\Delta_z^Mf\|_{L^p(\R^N)}}{|z|^{sp}}\right)\mathrm{d}z, \label{2M2M}
\end{align}
for any $s\in(0,M)$. Taking the supremum over $\eps>0$ (i.e. over $|h|>0$) and recalling that $\omega$ is a continuous, non-decreasing function, we find that
\[\omega\left([f]_{B_{p,\infty}^s(\R^N)}\right)\lesssim\,\sup_{\eps>0}\int_{\R^N}\rho_\varepsilon(z)~\omega\left(\frac{\|\Delta_z^Mf\|_{L^p(\R^N)}}{|z|^{s}}\right)\mathrm{d}z.\]
This is because the space $B_{p,\infty}^s(\R^N)$ with $s\in(0,M)$ is characterized by finite differences of order $M$, i.e.
\[[f]_{B_{p,\infty}^s(\R^N)}\sim \sup_{|h|\ne0}\frac{\|\Delta_h^{2M}f\|_{L^p(\R^N)}}{|h|^s}, \quad \forall s\in(0,M),\]
Indeed, recall Lemma \ref{2Morder} and $\|\Delta_h^{2M}f\|_{L^p(\R^N)}\leq C(M)\|\Delta_h^Mf\|_{L^p(\R^N)}$.

\begin{rem}
As above, we still have
$$ \omega\left(\limsup_{|h|\to0}\frac{\|\Delta_h^{M}f\|_{L^p(\R^N)}}{|h|^{s}}\right)\sim\limsup_{\eps\downarrow0}\,\mathscr{D}_\omega(\rho_\eps,f). $$
So that Corollary \ref{approx} follows in that case too.
\end{rem}
\begin{rem}\label{rkbest}
Note that, when $(\rho_\eps)_{\eps>0}\in \mathfrak{M}(\R^N)$ (with corresponding $\Psi$ and $\delta$), we have actually proved a stronger estimate than needed. Indeed, we have shown that for any $h\in\R^N\setminus\{0\}$, $s\in(0,1]$, $p\in[1,\infty]$ and $(\rho_\eps)_{\eps>0}\in \mathfrak{M}(\R^N)$ it holds
\begin{align}
\omega\left(\frac{\|\Delta_h^1f\|_{L^p(\R^N)}}{|h|^s}\right)\leq C(\Psi,\delta,A_\omega)\int_{\R^N}\rho_{\eps(|h|)}(z)~\omega\left(\frac{\|\Delta_z^1f\|_{L^p(\R^N)}}{|z|^s}\right)\mathrm{d}z,
\end{align}
where $\eps(t)=\frac{2t}{\delta}$ and $A_\omega$ is as in Definition \ref{rsubad}.
\end{rem}
\vskip 0.3cm

\noindent \textsc{Step 3: proof of Remark \ref{rkhypo}.}
\vskip 0.3cm
Let $1\leq p<\infty$, $\omega\in C_{\mathrm{inc}}^+$ and $\Omega\in C_{\mathrm{inc}}$ satisfying \eqref{OOmg1}. Then, we have
\begin{align}
\omega\left(\int_{\R^N}\Omega\left(\frac{|\Delta_h^Mf(x)|}{|h|^s}\right)\mathrm{d}x\right)&\geq \omega\left(m_1\frac{\|\Delta_h^Mf\|_{L^p(\R^N)}^p}{|h|^{sp}}\right) \nonumber \\
&\geq K_1(m_1,A_\omega)\,\omega\left(\frac{\|\Delta_h^Mf\|_{L^p(\R^N)}^p}{|h|^{sp}}\right),
\end{align}
where $K_1(m_1,A_\omega)>0$ and $A_\omega>0$ is a number such that $\omega$ satisfies the condition of Definition \ref{rsubad} with $A=A_\omega$. Similarly, for some $K_2(m_2,A_\omega)>0$,
\begin{align}
\omega\left(\int_{\R^N}\Omega\left(\frac{|\Delta_h^Mf(x)|}{|h|^s}\right)\mathrm{d}x\right)\leq K_2(m_2,A_\omega)\,\omega\left(\frac{\|\Delta_h^Mf\|_{L^p(\R^N)}^p}{|h|^{sp}}\right).
\end{align}
Now, since $\tilde{\omega}=\omega\circ\left|\cdot\right|^p$ lies in $C_{\mathrm{inc}}^+$ (by Remark \ref{cinc}) we obtain the desired claim, i.e. that
$$ \omega\left([f]_{B_{p,\infty}^s(\R^N)}^p\right)\sim\,\sup_{\eps>0}\,\int_{\R^N}\rho_\eps(h)\,\omega\left(\int_{\R^N}\Omega\left(\frac{|\Delta_h^Mf(x)|}{|h|^s}\right)\mathrm{d}x\right)\mathrm{d}h. $$
The remaining claims of Remark \ref{rkhypo} follow by a similar argument of comparison. \\

\section{Characterization of Sobolev and $BV$ spaces} \label{sectionSOBOLEV}

We begin with a preliminary result.
\begin{lemma}\label{limEsup}
Let $p\in[1,\infty]$, $f\in L^p(\R^N)$ and let
\begin{align}
A:=\sup_{h\ne0}\frac{\|\Delta_h^1f\|_{L^p(\R^N)}}{|h|}.
\end{align}
If $A$ is finite, then,
\begin{align}
A=\limsup_{|h|\to0}\frac{\|\Delta_h^1f\|_{L^p(\R^N)}}{|h|}.
\end{align}
\end{lemma}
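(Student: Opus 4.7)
The strategy is to exploit the subadditive scaling of the first-order difference operator along a fixed direction: for any $h\in\R^N\setminus\{0\}$ and $N\in\N^*$, the identity
\[\tau_h f-f=\sum_{k=0}^{N-1}\tau_{kh/N}\bigl(\tau_{h/N}f-f\bigr)\]
combined with the translation invariance of the $L^p$-norm yields
\[\|\tau_h f-f\|_{L^p(\R^N)}\le N\,\|\tau_{h/N}f-f\|_{L^p(\R^N)}.\]
Dividing by $|h|=N|h/N|$ gives the scaling inequality
\[\frac{\|\tau_h f-f\|_{L^p(\R^N)}}{|h|}\le\frac{\|\tau_{h/N}f-f\|_{L^p(\R^N)}}{|h/N|},\]
so the quotient along the ray through $h$ is non-increasing as one approaches $0$.

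Setting $B:=\limsup_{|k|\to0}\|\Delta_k^1 f\|_{L^p(\R^N)}/|k|$, the inequality $B\le A$ is immediate since the limsup is taken over a subset of the admissible $h$. For the reverse inequality, fix $h\ne0$; as $N\to\infty$ one has $h/N\to0$, hence
\[\frac{\|\tau_h f-f\|_{L^p(\R^N)}}{|h|}\le\limsup_{N\to\infty}\frac{\|\tau_{h/N}f-f\|_{L^p(\R^N)}}{|h/N|}\le B.\]
Taking the supremum over $h\ne0$ in the left-hand side yields $A\le B$, and therefore $A=B$. The assumption $A<\infty$ is only used to guarantee both quantities are genuine real numbers (the derivation itself does not require it).

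There is essentially no hard step here; the only subtlety worth stressing is that the argument is purely \emph{one-dimensional along a ray}, so no radial symmetry or other structural hypothesis on $f$ or $\R^N$ enters. Note also that the same telescoping trick fails for higher-order differences $\Delta_h^M$ with $M\ge2$: one would only obtain $\|\Delta_h^M f\|_{L^p}/|h|^s\le N^{M-s}\|\Delta_{h/N}^M f\|_{L^p}/|h/N|^s$, and the factor $N^{M-s}$ blows up, which is precisely the mechanism behind the dichotomy recorded in \eqref{NBkk} and the reason Theorem~\ref{THEO} is most naturally phrased as a supremum rather than a limit superior.
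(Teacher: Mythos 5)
Your argument is correct, and it reaches the conclusion by a genuinely more elementary route than the paper. The paper's proof introduces $F(t):=\sup_{\sigma\in\S^{N-1}}\|\Delta_{\sigma t}^1f\|_{L^p(\R^N)}$, verifies that $F$ is subadditive on $\R$, and then invokes the limit theorem for measurable subadditive functions (Kuczma, Theorem 16.3.3) to get $A=\lim_{t\downarrow0}F(t)/t$; your telescoping identity $\tau_hf-f=\sum_{k=0}^{n-1}\tau_{kh/n}(\tau_{h/n}f-f)$ proves, in two lines and per direction, exactly the special case of that theorem which the lemma needs, namely $\|\Delta_h^1f\|_{L^p}/|h|\leq\|\Delta_{h/n}^1f\|_{L^p}/|h/n|$ for every integer $n$, whence $A\leq\limsup_{|k|\to0}\|\Delta_k^1f\|_{L^p}/|k|$ directly. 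What you lose relative to the paper is the slightly stronger output of the subadditive limit theorem (existence of the genuine limit of $F(t)/t$, not only the limit superior), but the lemma as stated does not require it; what you gain is a self-contained proof with no external citation and no need to pass through the supremum over the sphere, and your argument makes transparent, via the factor $N^{M-s}$, exactly why the mechanism breaks for $s<M$, consistently with \eqref{NBkk}. Two small presentational points: you overload the letter $N$ (dimension of $\R^N$ versus the subdivision parameter), so rename the latter, say $n$; and the phrase ``the quotient along the ray is non-increasing'' overstates what you proved --- you only compare $h$ with $h/n$ for integer $n$, which is all the argument uses, so state it that way.
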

Although our argument is much simpler, a proof of a similar result (involving moduli of continuity) may be found in \cite{Triebel2}. However, the argument in \cite{Triebel2} heavily rely on the continuity of $\|\Delta_h^1f\|_{L^p(\R^N)}$ and, thus, does not cover the case $p=\infty$. We show that, in fact, it is enough to ask only for some kind of subadditivity.
\begin{proof}
Let $f\in L^p(\R^N)$, $1\leq p\leq\infty$. For any $t\in\R$, define
\begin{align}
F(t):=\sup_{\sigma\in\S^{N-1}}\|\Delta_{\sigma t}^1f\|_{L^p(\mathbb{R}^N)}.
\end{align}
Clearly, $F$ is measurable. Now, let $t_1,t_2\in\R$. Specializing \eqref{estimationtrans} in $h=(t_1+t_2)\sigma$ and $z=t_1\sigma$, for some $\sigma\in\S^{N-1}$, yields
$$ \|\Delta_{\sigma(t_1+t_2)}^1f\|_{L^p(\mathbb{R}^N)}\leq \|\Delta_{\sigma t_1}^1f\|_{L^p(\mathbb{R}^N)}+\|\Delta_{-\sigma t_2}^1f\|_{L^p(\mathbb{R}^N)}\leq F(t_1)+F(t_2). $$
Consequently,
\begin{align}
F(t_1+t_2)\leq F(t_1)+F(t_2) \qquad{\mbox{ for all }}~~t_1,t_2\in\R.
\end{align}
Whence, $F:\R\to[0,\infty)$ is a measurable, subadditive function. Now suppose that
\begin{align}
A:=\sup_{t>0}\frac{F(t)}{t}<\infty.
\end{align}
Then, by the limit theorem of subadditive functions \cite[Theorem 16.3.3, p.467]{Kuczma},
\begin{align}
A=\lim_{t\downarrow0}\frac{F(t)}{t}=\lim_{t\downarrow0}\sup_{\sigma\in\S^{N-1}}\frac{\|\Delta_{\sigma t}^1f\|_{L^p(\mathbb{R}^N)}}{t}.
\end{align}
This proves the lemma.
\end{proof}
$~~$
\subsection{Proofs of Theorems \ref{THEO2} and \ref{THEO3}} ~~ \\

\begin{proof} The proof follows from a straightforward adaptation of the proof of Theorem \ref{THEO} in the case $M=1$ and $s\in(0,1)$ with Lemma \ref{limEsup} and the fact that, using for example \cite[Theorem 2, Theorem 3']{BBM},
\begin{align}
\|\nabla f\|_{L^p(\R^N)}\sim \limsup_{|h|\to0}~\frac{\|f(\cdot+h)-f\|_{L^p(\R^N)}}{|h|}, \label{nnnnbla}
\end{align}
for all $1\leq p<\infty$ (when $p=1$ the left-hand side of \eqref{nnnnbla} is to be understood in the $BV$-sense, i.e. as the total mass of the Radon measure $\nabla f$). \\

The case $p=\infty$ follows from the arguments above and the definition of the Lipschitz semi-norm.
\end{proof}
$~~$
\subsection{A limiting embedding between Lipschitz and Besov spaces} ~~ \\

This subsection is devoted to the proof of Theorem \ref{imbLip}. To this end, we recall the following improvement of the Chebychev inequality due to Bourgain, Brezis and Mironescu \cite{BBM}.
\begin{lemma}[Bourgain, Brezis, Mironescu, \cite{BBM}]\label{chebychev}
 Let $g,h:(0,\delta)\to\R_+$. Assume that $g(t)\leq g(t/2)$ for all $t\in(0,\delta)$ and that $h$ is non-increasing. Then, for some constant $C=C(N)>0$,
\begin{align}
\delta^{-N}\int_0^\delta t^{N-1}g(t)\mathrm{d}t\int_0^\delta t^{N-1}h(t)\mathrm{d}t\leq C\int_0^\delta t^{N-1}g(t)h(t)\mathrm{d}t.
\end{align}
\end{lemma}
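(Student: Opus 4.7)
The plan is to reduce the inequality, via a layer-cake decomposition of $h$, to a reverse-doubling estimate for $g$ with respect to the measure $t^{N-1}\,\mathrm{d}t$, and then to establish that estimate by a short dyadic summation based on the hypothesis $g(t)\leq g(t/2)$.

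First I would write $h(t)=\int_0^\infty\mathds{1}_{\{h>s\}}(t)\,\mathrm{d}s$ and exploit the monotonicity of $h$ to see that the super-level set $\{t\in(0,\delta):h(t)>s\}$ is of the form $(0,a(s))$ for some $a(s)\in[0,\delta]$. Fubini's theorem then yields
\begin{equation*}
\int_0^\delta t^{N-1}h(t)\,\mathrm{d}t=\int_0^\infty\frac{a(s)^N}{N}\,\mathrm{d}s,\qquad \int_0^\delta t^{N-1}g(t)h(t)\,\mathrm{d}t=\int_0^\infty\!\!\int_0^{a(s)}t^{N-1}g(t)\,\mathrm{d}t\,\mathrm{d}s,
\end{equation*}
so that the desired inequality would follow from the pointwise-in-$s$ claim: for every $\alpha\in(0,\delta]$,
\begin{equation*}
\frac{1}{\delta^N}\int_0^\delta t^{N-1}g(t)\,\mathrm{d}t\leq\frac{C(N)}{\alpha^N}\int_0^\alpha t^{N-1}g(t)\,\mathrm{d}t.
\end{equation*}

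To prove this reverse-doubling estimate, I would set $I_k:=(\delta 2^{-k-1},\delta 2^{-k}]$ and $a_k:=\int_{I_k}t^{N-1}g(t)\,\mathrm{d}t$. The substitution $t=2u$ combined with the hypothesis $g(2u)\leq g(u)$ immediately gives $a_k\leq 2^N a_{k+1}$ for every $k\geq 0$. Iterating yields $a_j\leq 2^{N(k+1-j)}a_{k+1}$ whenever $j\leq k+1$, so a geometric summation produces $\sum_{j=0}^{k}a_j\lesssim 2^{N(k+1)}a_{k+1}$. Since $a_{k+1}\leq\sum_{j\geq k+1}a_j=\int_0^{\delta 2^{-k-1}}t^{N-1}g(t)\,\mathrm{d}t$, one obtains $\int_0^\delta t^{N-1}g(t)\,\mathrm{d}t\lesssim 2^{N(k+1)}\int_0^{\delta 2^{-k-1}}t^{N-1}g(t)\,\mathrm{d}t$. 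For a given $\alpha\in(0,\delta]$, it then suffices to pick the unique $k\geq 0$ with $\delta 2^{-k-1}<\alpha\leq\delta 2^{-k}$, which ensures $2^{N(k+1)}\leq 2^N(\delta/\alpha)^N$ and $\int_0^{\delta 2^{-k-1}}\leq\int_0^\alpha$.

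The main subtlety is that $g(t)\leq g(t/2)$ is only a cross-scale comparison and does not render $g$ monotone on any particular interval, so the classical Chebychev rearrangement inequality for comonotone functions cannot be applied directly. However, this relation is precisely what is needed to dominate the $g$-mass of each dyadic annulus by the $g$-mass of the next smaller one up to a geometric factor, making the dyadic argument the natural replacement; once combined with the layer-cake reduction, the proof is complete.
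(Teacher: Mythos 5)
Your proof is correct. Note that the paper does not actually prove this lemma: it is quoted from Bourgain--Brezis--Mironescu \cite{BBM} and used as a black box in the proof of Theorem \ref{imbLip}, so there is no in-paper argument to compare against. Both of your reductions are sound. The layer-cake formula for the non-increasing $h$ correctly reduces the statement to the reverse-doubling inequality $\delta^{-N}\int_0^\delta t^{N-1}g\,\mathrm{d}t\lesssim\alpha^{-N}\int_0^\alpha t^{N-1}g\,\mathrm{d}t$ for all $\alpha\in(0,\delta]$ (the values of $s$ with $a(s)=0$ contribute nothing to either side, so only $\alpha>0$ is needed), and the dyadic estimate $a_k\leq 2^N a_{k+1}$ follows from the substitution $t=2u$ exactly as you say, the endpoint $u=\delta/2$ being a null set. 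Two cosmetic points: the super-level sets of $h$ may be of the form $(0,a(s)]$ rather than $(0,a(s))$, which is harmless for the integrals, and you should say a word about the degenerate case $\int_0^\delta t^{N-1}g\,\mathrm{d}t=\infty$, where the chain $a_j\leq 2^{N(k+1-j)}a_{k+1}$ still holds in $[0,\infty]$ and forces the right-hand side to be infinite as well, so the inequality is trivially true. Your route (layer-cake in $h$ plus a dyadic reverse-doubling estimate in $g$) is in the same spirit as the original dyadic argument of \cite{BBM}, which instead pairs the annular decomposition of $g$ directly with lower bounds for $h$ on each annulus; your version is self-contained and makes the role of each hypothesis transparent, which is a perfectly acceptable substitute for the citation.
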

We are now ready to prove Theorem \ref{imbLip}.
\begin{proof}[Proof of Theorem \ref{imbLip}]
Let $q\in[1,\infty)$ and $(\rho_\eps)_{\eps\in(0,1]}$ be the sequence defined by
\begin{align}
\rho_\eps(t):=\frac{1}{|B_1|}\,\frac{\eps^{1-\eps}}{t^{N-\eps}}\,\mathds{1}_{(0,\eps)}(t) \qquad{\mbox{ for all }}~~\eps\in(0,1]~~\text{ and all }~~t\geq0.
\end{align}
Note that
\begin{align}
\int_0^\infty\rho_\eps(t)\,t^{N-1}\mathrm{d}t=\frac{1}{|B_1|} \qquad{\mbox{ for all }}~~\eps\in(0,1]. \label{masse-lipschitz}
\end{align}
In addition, we also set
\begin{align}
\eta_\eps(h):=\eps^{-N}\,C_2\,\frac{|h|}{\eps}\,\mathds{1}_{B_\eps}(h) \qquad{\mbox{ for all }}~~\eps\in(0,1]~~\text{ and all }~~h\in\R^N,
\end{align}
where $C_2>0$ is a constant such that $\eta_\eps$ has unit mass for each $\eps$.
Notice that $(\eta_\eps)_{\eps>0}\subset L^1(\R^N)$ is a sequence of radial functions satisfying \eqref{molli} and \eqref{molli3}. In particular, by Theorem \ref{THEO2} we know that
\begin{align}
[f]_{C^{0,1}(\R^N)}\lesssim \limsup_{\eps\downarrow0}\int_{B_\eps}\eta_\eps(h)\frac{\|\Delta_h^1f\|_{L^\infty(\R^N)}}{|h|}\mathrm{d}h. \label{frak}
\end{align}
Next, for every $t>0$, define
\begin{align}
F(t):=\int_{\S^{N-1}}\|\Delta_{\sigma t}^1f\|_{L^\infty(\mathbb{R}^N)}\mathrm{d}\mathcal{H}^{N-1}(\sigma).
\end{align}
By the triangle inequality we have $F(2t)\leq 2F(t)$ so that if we let
\[g(t):=\frac{F(t)}{t},\]
we have $g(t)\leq g(t/2)$. In these notations, we have the identity:
\begin{align}
\int_{B_\eps}\rho_\eps(|h|)\frac{\|\Delta_h^1f\|_{L^\infty(\mathbb{R}^N)}}{|h|}\mathrm{d}h=\int_0^\eps t^{N-1}\rho_\eps(t)g(t)\mathrm{d}t.
\end{align}
\noindent Invoking Lemma \ref{chebychev} and \eqref{masse-lipschitz} we deduce that, for every $\eps\in(0,1]$,
\begin{align}
\int_{B_\eps}\rho_\eps(|h|)\frac{\|\Delta_h^1f\|_{L^\infty(\R^N)}}{|h|}\mathrm{d}h&\gtrsim\eps^{-N}\int_0^\eps t^{N-1}\rho_\eps(t)\mathrm{d}t\int_0^\eps t^{N-1}g(t)\mathrm{d}t \nonumber \\
&=\fint_{B_\eps}\frac{\|\Delta_h^1f\|_{L^\infty(\R^N)}}{|h|}\mathrm{d}h \nonumber \\
&\geq \eps^{-1}\fint_{B_\eps}\|\Delta_h^1f\|_{L^\infty(\R^N)}\mathrm{d}h \nonumber \\
&\gtrsim\int_{B_\eps}\eta_\eps(h)\frac{\|\Delta_h^1f\|_{L^\infty(\R^N)}}{|h|}\mathrm{d}h.
\end{align}
Whence, using \eqref{frak} we come up with
\begin{align}
[f]_{C^{0,1}(\R^N)}\lesssim\limsup_{\eps\downarrow0}\int_{B_\eps}\rho_\eps(|h|)\frac{\|\Delta_h^1f\|_{L^\infty(\mathbb{R}^N)}}{|h|}\mathrm{d}h. \label{liiii}
\end{align}
Now, use the Jensen inequality to deduce that
\begin{align}
[f]_{C^{0,1}(\R^N)}^q&\lesssim\limsup_{\eps\downarrow0}\int_{B_\eps}\rho_\eps(|h|)\frac{\|\Delta_h^1f\|_{L^\infty(\mathbb{R}^N)}^q}{|h|^q}\mathrm{d}h \nonumber \\
&\lesssim \limsup_{\eps\downarrow0}~\eps^{-\eps}\left(\eps\int_{B_\eps}\frac{\|\Delta_h^1f\|_{L^\infty(\R^N)}^q}{|h|^{N+q-\eps}}\mathrm{d}h\right) \nonumber \\
&\lesssim \limsup_{\eps\downarrow0}~\eps\int_{\R^N}\frac{\|\Delta_h^1f\|_{L^\infty(\R^N)}^q}{|h|^{N+q-\eps}}\mathrm{d}h. \label{ghjk}
\end{align}
Define $\sigma\in(1-\frac{1}{q},1)$ by the relation $\eps=q(1-\sigma)$. Then,
\begin{align}
[f]_{C^{0,1}(\R^N)}^q&\lesssim \limsup_{\sigma\uparrow1}~q(1-\sigma)[f]_{B_{\infty,q}^\sigma(\R^N)}^q.
\end{align}
The converse of this is covered by Proposition \ref{cotefacile}.
\end{proof}

\section{A non-limiting embedding theorem}\label{sectionLIMITING}

This section is devoted to the proof of Theorem \ref{nonlimit}. The idea of the proof is very similar to that of Theorem 4.4 (ii) on p.36 in \cite{Triebel} (see in particular pp.39-40 there). Nevertheless, we choose to give more details in order to make the dependence of the constants involved on $s$, $p$ and $q$ as explicit as possible.

We will need some preliminary estimates.
\begin{lemma}\label{unboundedcesaro}
Let $(u_j)_{j\geq0}$ be the sequence defined by
\begin{align}
u_j:=\left\{
\begin{array}{r l}
k & \text{if}~~j=2^k~~\text{ for some }~~k\in\N,  \vspace{3pt}\\
0 & \text{else}.
\end{array}
\right.
\end{align}
Then, $(u_j)_{j\geq0}\notin\ell^\infty(\N)$ and
\begin{align}
\sup_{\eps>0}\,\eps\sum_{j\geq0}2^{-j\eps}u_j\leq \frac{2}{e\ln(2)}.
\end{align}
\end{lemma}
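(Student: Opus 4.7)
The first assertion is immediate: the subsequence $(u_{2^k})_{k\geq0}=(k)_{k\geq 0}$ is unbounded, hence $(u_j)_{j\geq0}\notin\ell^\infty(\N)$.

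For the quantitative estimate, my plan is to reduce the double-scale sum to a single-variable one by indexing over the support of $(u_j)$: since $u_j=0$ unless $j=2^k$ for some $k\in\N$, and since the term $k=0$ (corresponding to $j=1$) contributes nothing, we have
\[
\eps\sum_{j\geq0}2^{-j\eps}u_j \;=\; \eps\sum_{k\geq1} k\,2^{-2^k\eps}.
\]
Then I would perform the natural change of parameter $t:=\eps\ln 2$, so that $2^{-2^k\eps}=e^{-2^k t}$, reducing the problem to
\[
\frac{t}{\ln 2}\sum_{k\geq 1} k\,e^{-2^k t}\;\leq\;\frac{2}{e\ln 2}, \qquad t>0.
\]

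The key pointwise ingredient is the elementary inequality $y e^{-y}\leq 1/e$ for all $y\geq 0$ (attained at $y=1$), applied with $y=2^k t$. This gives
\[
e^{-2^k t}\;\leq\;\frac{1}{e\,2^k t},
\]
which cleanly cancels the factor of $t$ in front. Summing and using the classical identity $\sum_{k\geq 1} k\,2^{-k}=2$ yields
\[
\frac{t}{\ln 2}\sum_{k\geq 1} k\,e^{-2^k t}\;\leq\;\frac{1}{e\ln 2}\sum_{k\geq 1}\frac{k}{2^k}\;=\;\frac{2}{e\ln 2},
\]
uniformly in $t>0$, hence uniformly in $\eps>0$, as desired.

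There is no real obstacle in this argument; the whole lemma amounts to recognizing that the super-exponential decay rate $2^k$ in the exponent is the only thing that matters, and that the elementary inequality $y\mapsto y e^{-y}\leq 1/e$ already absorbs the prefactor $\eps$. Any alternative approach (e.g.\ integral comparison via $\int_0^\infty x e^{-t 2^x}\mathrm{d}x$) would produce the same $1/e$ constant but through less transparent computations, so this direct route is clearly preferable.
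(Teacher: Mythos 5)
Your proof is correct and follows essentially the same route as the paper: both reduce the sum to $\sum_{k} k\,2^{-2^k\eps}$, invoke the elementary bound $e^{-y}\leq 1/(ey)$ (equivalently $ye^{-y}\leq 1/e$) with $y$ proportional to $2^k\eps$ to cancel the prefactor $\eps$, and then evaluate $\sum_{k\geq1} k\,2^{-k}=2$. Your substitution $t=\eps\ln 2$ is a cosmetic repackaging of the paper's direct statement $2^{-x}\leq 1/(ex\ln 2)$, so there is no substantive difference.
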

\begin{proof}
Let $\eps>0$ and set
\begin{align}
A_\eps:=\sum_{j\geq0}\eps2^{-j\eps}u_j=\sum_{k\geq0}\eps2^{-2^k\eps}k.
\end{align}
Using the (trivial) estimate $e^{-x}\leq1/(ex)$, we have $2^{-x}\leq 1/(ex\ln(2))$. Thus,
$$ A_\eps\leq \frac{1}{e\ln(2)}\sum_{k\geq0}k2^{-k}. $$
Recalling the well-known identity $\sum kx^k=x/(1-x)^{2}$ (for $0\leq x<1$), we finally obtain
$$ A_\eps\leq \frac{2}{e\ln(2)}. $$
Since this holds for every $\eps>0$, we obtain the desired claim.
\end{proof}
\begin{lemma}\label{abovv}
Let $M\in\mathbb{N}^\ast$ and $(u_k)_{k\geq1}$ be a sequence of non-negative numbers. Let $\psi\in C_c^\infty(\mathbb{R})$ be such that $\psi$ is not a polynomial of degree less than or equal to $M-1$, and such that
\[\mathrm{supp}(\psi)\subset [-\eta,\eta] \quad \text{ for some } \quad \eta\geq1,\]
and set
\[f(x_1,...,x_N)=\sum_{k\geq1}u_k\,\psi\left(\frac{x_1-2(M+\eta)k}{2^{-k}}\right)...\,\,\psi\left(\frac{x_N-2(M+\eta)k}{2^{-k}}\right).\]
Then, for any fixed $j\geq1$ we have
\[\sup_{\frac{1}{2^{j+1}}\leq|h|\leq \frac{1}{2^{j}}}\|\Delta_h^Mf\|_{L^p(\mathbb{R}^N)}\geq c\,u_j2^{-j\frac{N}{p}},\]
for some constant $c>0$ depending only on $N$, $p$, $M$ and $\psi$.
\end{lemma}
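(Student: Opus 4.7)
The plan is to exploit the fact that the building blocks $f_k(x) := u_k\,\psi(2^k(x_1-c_k))\cdots\psi(2^k(x_N-c_k))$, with $c_k:=2(M+\eta)k$, have well-separated supports, so that the whole $L^p$ norm of $\Delta_h^M f$ is bounded below by that of a single block $\Delta_h^M f_j$, which can then be computed by rescaling. The lower bound will come from a fixed unit-scale quantity that cannot vanish because $\psi$ is not a polynomial of degree $\leq M-1$.

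\textbf{Step 1 (disjoint supports).} For $k\ge1$, $\mathrm{supp}(f_k)$ lies in the cube of max-norm radius $2^{-k}\eta\le\eta/2$ around $c_k\mathbf{1}$. For $|h|\le 2^{-j}\le 1/2$ (since $j\ge1$) and $0\le i\le M$, $\mathrm{supp}(f_k(\cdot+ih))\subseteq c_k\mathbf{1}-ih+[-2^{-k}\eta,2^{-k}\eta]^N$, so
\[
\mathrm{supp}(\Delta_h^Mf_k)\subseteq\big\{x:\|x-c_k\mathbf{1}\|_\infty\le 2^{-k}\eta+M|h|\big\}\subseteq\big\{x:\|x-c_k\mathbf{1}\|_\infty\le \tfrac{\eta+M}{2}\big\}.
\]
Since the centers $c_k\mathbf{1}$ are separated in max-norm by at least $2(M+\eta)>\eta+M$, the supports of $\Delta_h^Mf_k$ for different $k$ are pairwise disjoint. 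Therefore
\[
\|\Delta_h^Mf\|_{L^p(\R^N)}^p=\sum_{k\ge1}\|\Delta_h^Mf_k\|_{L^p(\R^N)}^p\ge \|\Delta_h^Mf_j\|_{L^p(\R^N)}^p.
\]

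\textbf{Step 2 (rescaling).} Writing $\widetilde\psi(y):=\psi(y_1)\cdots\psi(y_N)$, one has $f_j(x)=u_j\widetilde\psi\bigl(2^j(x-c_j\mathbf{1})\bigr)$. A direct computation from the definition of $\Delta_h^M$ gives $\Delta_h^Mf_j(x)=u_j\,\Delta_{2^jh}^M\widetilde\psi\bigl(2^j(x-c_j\mathbf{1})\bigr)$, and the change of variable $y=2^j(x-c_j\mathbf{1})$ yields
\[
\|\Delta_h^Mf_j\|_{L^p(\R^N)}=u_j\,2^{-jN/p}\,\|\Delta_{2^jh}^M\widetilde\psi\|_{L^p(\R^N)}.
\]

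\textbf{Step 3 (choice of $h$ and non-vanishing).} I will specialize to $h_*:=2^{-j-1}e_1$, so $|h_*|=2^{-j-1}\in[2^{-j-1},2^{-j}]$ and $2^jh_*=e_1/2$. By the tensor product structure,
\[
\|\Delta_{e_1/2}^M\widetilde\psi\|_{L^p(\R^N)}=\|\Delta_{1/2}^M\psi\|_{L^p(\R)}\cdot\|\psi\|_{L^p(\R)}^{N-1}=:c.
\]
It remains to verify $c>0$. Clearly $\|\psi\|_{L^p(\R)}>0$ since $\psi\not\equiv 0$ (the only polynomial in $C_c^\infty(\R)$ is $0$). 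For $\Delta_{1/2}^M\psi\not\equiv 0$, suppose the contrary; then the recursion $\psi(x+M/2)=-\sum_{i=0}^{M-1}(-1)^{M-i}\binom{M}{i}\psi(x+i/2)$ combined with the fact that $\psi$ vanishes for $x$ sufficiently negative propagates the vanishing to the right by arbitrary large amounts, forcing $\psi\equiv 0$, a contradiction. (Alternatively, Fourier: $(e^{i\xi/2}-1)^M\widehat\psi(\xi)\equiv 0$ forces $\widehat\psi$ to be supported on a discrete set, hence to vanish by continuity.)

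\textbf{Step 4 (conclusion).} Combining Steps~1--3,
\[
\sup_{\frac{1}{2^{j+1}}\le|h|\le\frac{1}{2^j}}\|\Delta_h^Mf\|_{L^p(\R^N)}\ge\|\Delta_{h_*}^Mf\|_{L^p(\R^N)}\ge\|\Delta_{h_*}^Mf_j\|_{L^p(\R^N)}=c\,u_j\,2^{-jN/p},
\]
with $c>0$ depending only on $N,p,M,\psi$, as required. The only mildly non-routine point is the disjointness computation in Step~1, which is why the spacing $2(M+\eta)$ between the centers of the blocks is built into the definition of $f$; everything else is linear algebra and rescaling.
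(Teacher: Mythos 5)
Your proof is correct and follows the same overall strategy as the paper: exploit the well-separated supports of the rescaled bumps, reduce to a single block, rescale, and use that $\Delta^M_h\psi\not\equiv 0$ because $\psi$ is not a polynomial of degree $\le M-1$. The differences are in execution, and they actually make your version cleaner. You work directly in $\R^N$ using the tensor structure of $\widetilde\psi$, pick a concrete $h_*=2^{-j-1}e_1$ so that $2^jh_*=e_1/2$ is manifestly $j$-independent, and take the full $L^p(\R^N)$ norm after rescaling. The paper instead reduces to $N=1$, restricts the integral to a small interval $[x_j-\varepsilon,x_j+\varepsilon]$ with $\varepsilon<R_j=2^{-j}(M+\eta)$, and obtains a constant $c_\varepsilon=\sup_{1/2\le|h|\le1}\bigl(\int_{-\varepsilon}^{\varepsilon}|\Delta_h^M\psi|^p\bigr)^{1/p}$. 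This is slightly delicate: the constraint $\varepsilon<R_j$ ties $\varepsilon$ to $j$, and one needs to be careful that the resulting constant stays bounded away from zero uniformly in $j$ (the clean fix is to observe that with $\varepsilon$ proportional to $R_j$ the rescaled domain $[-\varepsilon 2^j,\varepsilon 2^j]$ becomes $j$-independent). Your use of the full norm over $\R^N$, together with a fixed rescaled shift $e_1/2$, sidesteps this parameter bookkeeping entirely. Your two justifications that $\Delta_{1/2}^M\psi\not\equiv 0$ (the forward-propagation recursion from compact support, or the Fourier support argument) are both valid; note that with compact support even $\psi\not\equiv 0$ would suffice, which is implied by the stated non-polynomial hypothesis.
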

\begin{proof}
We begin with the case $N=1$. Fix any $j\geq1$ and let $|h|\leq 2^{-j}$. Let us set
$$ x_j:=2(M+\eta)j \quad \text{ and } \quad R_j:=2^{-j}(M+\eta). $$
Since $\mathrm{supp}(\psi)\subset [-\eta,\eta]$ and $|h|\leq2^{-j}$, for any $\ell\in[\![0,M]\!]$, we have
\begin{align}
x\in\mathrm{supp}~\psi\left(\frac{\cdot+ \ell h-x_j}{2^{-j}}\right)&\Leftrightarrow x+h\ell\in [x_j-\eta2^{-j},x_j+\eta2^{-j}] \nonumber \\
&\Leftrightarrow x\in [x_j-h\ell-\eta2^{-j},x_j-h\ell+\eta2^{-j}]=:B_{\ell,j}. \nonumber
\end{align}
And, clearly
$$ \mathrm{supp}\left(\Delta_h^M\psi\left(\frac{\cdot-x_j}{2^{-j}}\right)\right)\subset \bigcup_{\ell\in[\![0,M]\!]}B_{\ell,j}. $$
Thus,
$$ \mathrm{supp}\left(\Delta_h^M\psi\left(\frac{\cdot-x_j}{2^{-j}}\right)\right)\subset [x_j-R_j,x_j+R_j]=:\mathcal{B}_j. $$
Furthermore,
$$ R_{j+1}+R_j=2^{-j}(M+\eta)\bigg(1+\frac{1}{2}\bigg)<2(M+\eta)=x_{j+1}-x_j, $$
and so, the $\mathcal{B}_j$'s are mutually disjoint. Therefore, given any fixed $j\geq1$ and $\varepsilon>0$ a small parameter less than $R_j$, we have
\begin{align}
\|\Delta_h^Mf\|_{L^p(\mathbb{R})}^p&=\left\|\sum_{k\geq1}u_k\Delta_h^M\psi\left(\frac{\cdot-x_k}{2^{-k}}\right)\right\|_{L^p(\mathbb{R})}^p \nonumber \\
&\geq \int_{x_j-\varepsilon}^{x_j+\varepsilon}\bigg|\sum_{k\geq1}u_k\Delta_h^M\psi\left(\frac{x-x_k}{2^{-k}}\right)\bigg|^p\mathrm{d}x \nonumber \\
&= u_j^p\int_{x_j-\varepsilon}^{x_j+\varepsilon}\left|\Delta_h^M\psi\left(\frac{x-x_j}{2^{-j}}\right)\right|^p\mathrm{d}x \nonumber \\
&= u_j^p2^{-j}\int_{-\varepsilon/2^{-j}}^{\varepsilon/2^{-j}}|\Delta_{h/2^{-j}}^M\psi(x)|^p\mathrm{d}x.  \label{KonkLemma}
\end{align}
Whence, writing $K_j:=\overline{B_{2^{-j}}\setminus B_{2^{-(j+1)}}}$ for $j\geq0$ we have
\begin{align}
\sup_{h\in K_j}\|\Delta_h^Mf\|_{L^p(\mathbb{R})}^p&\geq u_j^p2^{-j}\sup_{h\in K_j}\int_{-\varepsilon}^{\varepsilon}|\Delta_{h/2^{-j}}^M\psi(x)|^p\mathrm{d}x \nonumber \\
&=c_\eps^p\,u_j^p2^{-j}.
\end{align}
where
\begin{align}
c_\eps=c_\eps(M,p,\psi):=\sup_{\frac{1}{2}\leq|h|\leq1}\left(\int_{-\varepsilon}^{\varepsilon}|\Delta_{h}^M\psi(x)|^p\mathrm{d}x\right)^{1/p}.
\end{align}
Since $\varepsilon>0$ is an arbitrary small parameter and $\psi$ is not a polynomial of degree less than or equal to $M-1$, we may find a number $\eps_0>0$ such that $c_{\eps_0}>0$.

The proof when $N\geq2$ follows by a straightforward adaptation of the case $N=1$ using the product structure $\psi(x_1)...\psi(x_N)$ and Fubini's theorem which gives the result with $c=c_{\eps_0}^N$.
\end{proof}

We are now ready to prove Theorem \ref{nonlimit}.
\begin{proof}[Proof of Theorem \ref{nonlimit}]
Let $M\in\N^\ast$ such that $s\in(0,M)$ and let $u_j$ be the sequence of Lemma \ref{unboundedcesaro}. Also, we let
$\psi\in C_c^\infty(\R^N)$ be such that
\begin{align}
\mathrm{supp}(\psi)\subset B_2 \quad \text{  and  } \quad \sum_{m\in\Z^N}\psi(x-m)=1, \quad \text{for any}~~x\in\R^N.
\end{align}
In addition, we suppose that $\psi$ has the product structure
$$ \psi(x)=\Psi(x_1)\,...\,\Psi(x_N), $$
for some $\Psi\in C_c^\infty(\R)$ different from a polynomial of degree less than or equal to $M-1$.
Then, we set
$$ m_j:=2(M+2)j\,\mathbf{1}\in\Z^N \qquad{\mbox{ with }}~~\mathbf{1}:=(1,...,1)\in\Z^N, $$
and we define
\begin{align}
f(x)&:=\sum_{j\geq1}u_j^{1/q}2^{-j(s-\frac{N}{p})}\psi(2^j(x-m_j)) \nonumber \\
&=\sum_{j\geq1}\big(u_j^{1/q}2^{-j\eps}\big)2^{-j(s-\eps-\frac{N}{p})}\psi(2^j(x-m_j)).
\end{align}
where $x\in\R^N$. It follows from Definition \ref{quarks} that
\begin{align}
2^{-j(s-\eps-\frac{N}{p})}\psi(2^j(x-m_j))
\end{align}
can be interpreted as $(s-\eps,p)$-$0$-quarks.
Accordingly, by Definition \ref{DEFquarkonial} we have that
\begin{align}
\eps\|f\|_{\mathbf{B}_{p,q}^{s-\eps}(\R^N)}^q\leq \eps\sum_{j\geq1}\big(2^{-j\eps}u_j^{1/q}\big)^{q}.
\end{align}
Using Lemma \ref{unboundedcesaro} we obtain that
\begin{align}
\eps\|f\|_{\mathbf{B}_{p,q}^{s-\eps}(\R^N)}^q\leq \frac{2q^{-1}}{e \ln(2)}<\infty, \quad \forall\eps\in(0,s).
\end{align}
In particular, recalling Theorem \ref{THquark}, $f\in L^p(\R^N)$. Also, for all $j\geq1$, we write
\begin{align}
K_j:=\{2^{-(j+1)}\leq|h|\leq2^{-j}\}.
\end{align}
Recall that
\begin{align}
\|f\|_{B_{p,\infty}^s(\R^N)}\sim \|f\|_{L^p(\R^N)}+\sup_{j\geq1}~2^{js}\sup_{h\in K_j}\|\Delta_h^Mf\|_{L^p(\R^N)},
\end{align}
is an equivalent norm on $B_{p,\infty}^s(\R^N)$ (this is a discretized version of Theorem 2.5.12 on p.110 in \cite{Triebel}).
Using this together with Lemma \ref{abovv} we get
\begin{align}
\|f\|_{B_{p,\infty}^s(\R^N)}\geq c\,\sup_{j\geq1}u_j^{1/q}=\infty.
\end{align}
Here $c=c(N,p,M,\psi)>0$. Thus $f\notin B_{p,\infty}^s(\R^N)$.
This completes the proof.
\end{proof}

\section{Non-compactness results}\label{sectionCOMPACT}

This section is devoted to the proofs of Theorem \ref{noncompactness} and Theorem \ref{noncpctB}. We begin with the former one.

\begin{proof}[Proof of Theorem \ref{noncompactness}]
For simplicity, we replace $\eps>0$ by $1/n$ with $n\geq1$ and write $\rho_n$ instead of $\rho_{1/n}$.
We write
\begin{align}
x&=(x_1,...,x_N)\in\R^N, \nonumber \\
y&=(x_1,...,x_{N-1})\in \R^{N-1}, \nonumber
\end{align}
and, for all $n\geq1$, we let
\[f_n(x):=n^{\frac{M-s}{Mp}}\Phi(n^{\frac{M-s}{M}}x_N)\varphi(y),\]
for some arbitrary $\Phi\in C_c^\infty(\R)$ and $\varphi\in C_c^\infty(\R^{N-1})$ (if $N=1$, replace $\varphi$ by $1$) with
\begin{align}
\max\{\|\Phi\|_{W^{M,p}(\R)},\|\varphi\|_{W^{M,p}(\R^{N-1})}\}\leq C_0. \label{hypppp}
\end{align}
Note that
\begin{align}
f_n\to 0~~\text{  a.e. in  }~~\R^N.
\end{align}
Further, from Fubini's theorem we infer that
\begin{align}
\int_{\R^N}|f_n(x)|^p\mathrm{d}x&=\left(\int_{\R^{N-1}}|\varphi(y)|^p\mathrm{d}y\right)\left(n^{\frac{M-s}{M}}\int_{\R}|\Phi(n^{\frac{M-s}{M}}x_N)|^p\mathrm{d}x_N\right) \nonumber \\
&=\|\varphi\|_{L^p(\R^{N-1})}^p\|\Phi\|_{L^p(\R)}^p= C_1. \nonumber
\end{align}
On the one hand, we observe that \eqref{hypppp} gives
\begin{align}
\|D^\alpha f_n\|_{L^p(\R^N)}^p\lesssim 1~~\text{  for each  }~~\alpha=(\alpha_1,...,\alpha_{N-1},0)\in\N^N~~\text{  with  }~~|\alpha|\leq M.
\end{align}
While, on the other hand, for all $j\in[\![1,M]\!]$,
\begin{align}
\left\|\frac{\partial^jf_n}{\partial x_N^j}\right\|_{L^p(\R^N)}^p&=n^{\frac{(M-s)}{M}jp}\|\varphi\|_{L^p(\R^{N-1})}^p~n^{\frac{M-s}{M}}\int_{\R}|\Phi^{(j)}(n^{\frac{M-s}{M}}x_N)|^p\mathrm{d}x_N \nonumber \\
&=n^{\frac{(M-s)}{M}jp}\|\varphi\|_{L^p(\R^{N-1})}^p\|\Phi^{(j)}\|_{L^p(\R)}^p \nonumber \\
&\leq n^{(M-s)p}\|\varphi\|_{L^p(\R^{N-1})}^p~\max_{j\in[\![1,M]\!]}\|\Phi^{(j)}\|_{L^p(\R)}^p.
\end{align}
Whence, using the product structure of $f_n$ we get
\begin{align}
\sup_{|\alpha|\leq M}\|D^\alpha f_n\|_{L^p(\R^N)}^p\leq C_2n^{(M-s)p}, \quad \text{ for all }~~n\geq1.
\end{align}
Moreover, for all $h\ne0$, it holds
\begin{align}
\|\Delta_h^Mf_n\|_{L^p(\R^N)}^p&\leq |h|^{Mp}\sup_{|\alpha|\leq M}\|D^\alpha f_n\|_{L^p(\R^N)}^p \nonumber \\
&\leq C_2|h|^{Mp}n^{(M-s)p}.
\end{align}
Then,
\begin{align}
\int_{\R^N}\int_{\R^N}\rho_n(h)\frac{|\Delta_h^Mf_n(x)|^p}{|h|^{sp}}\mathrm{d}x\mathrm{d}h&\lesssim \int_{\R^N}\rho_n(h)|h|^{p(M-s)}n^{p(M-s)}\mathrm{d}h \nonumber \\
&=\int_{\R^N}\rho(h)|h|^{p(M-s)}\mathrm{d}h.
\end{align}
We thus conclude that
\begin{align}
\int_{\R^N}\int_{\R^N}\rho_n(h)\frac{|\Delta_h^Mf_n(x)|^p}{|h|^{sp}}\mathrm{d}x\mathrm{d}h\leq C_3 \quad \text{  for any  }~~n\geq1.
\end{align}
Yet, $(f_n)_{n\geq1}$ is not relatively compact in $L_{loc}^p(\R^N)$.
\end{proof}

The proof of Theorem \ref{noncpctB} is as follows.

\begin{proof}[Proof of Theorem \ref{noncpctB}]

The proof in this case is very similar to that of Theorem \ref{noncompactness}.
We let $M\in\N^\ast$, $s\in(0,M)$ and pick a slightly different sequence of functions, for example
\begin{align}
f_n(x):=n^{\frac{\gamma}{Mp}}\Phi(n^{\frac{\gamma}{M}}x_N)\varphi(y),
\end{align}
where $0\leq \gamma\leq\frac{1}{q}$, $\Phi\in C_c^\infty(\R)$ and $\varphi\in C_c^\infty(\R^{N-1})$. Also, we set
\begin{align}
\rho_n(h):=\frac{1}{n\sigma_N|h|^{N-1/n}}\mathds{1}_{(0,1)}(|h|).
\end{align}
As before,
\begin{align}
\|f_n\|_{L^p(\R^N)}=\|\Phi\|_{L^p(\R)}\|\varphi\|_{L^p(\R^{N-1})}.
\end{align}
And
\begin{align}
\sup_{|\alpha|\leq M}\|D^\alpha f_n\|_{L^p(\R^N)}\lesssim n^{\gamma}.
\end{align}
Whence,
\begin{align}
\int_{B_1}\rho_n(h)\frac{\|\Delta_h^Mf_n\|_{L^p(\R^N)}^q}{|h|^{sq}}\mathrm{d}h&\lesssim \int_{B_1}\rho_n(h)|h|^{(M-s)q}n^{\gamma q}\mathrm{d}h \nonumber \\
&=\int_0^1n^{\gamma q-1}\frac{\mathrm{d} r}{r^{1-(q(M-s)+1/n)}} \nonumber \\
&=\frac{n^{\gamma q}}{1+(M-s)qn}\lesssim \frac{1}{n^{1-\gamma q}}.
\end{align}
Since $0\leq \gamma\leq\frac{1}{q}$ we obtain
\begin{align}
\int_{B_1}\rho_n(h)\frac{\|\Delta_h^Mf_n\|_{L^p(\R^N)}^q}{|h|^{sq}}\mathrm{d}h\leq C \quad \text{ for all }~~n\geq1.
\end{align}
However, $(f_n)_{n\geq1}$ is not relatively compact in $L_{loc}^p(\R^N)$.
\end{proof}

\section*{Appendix}

In \cite{LaMi}, Lamy and Mironescu proved the
\begin{theorem}[Lamy, Mironescu, \cite{LaMi}]\label{lami}
Let $s>0$, $p\in[1,\infty)$ and $(\rho_\varepsilon)_{\varepsilon>0}$ satisfying \eqref{molli} and \eqref{molli3}. Then,
\begin{align}
\|f\|_{B_{p\infty}^s(\R^N)}\lesssim \|f\|_{L^p(\R^N)}+\sup_{\varepsilon\in(0,1)}\frac{\|\rho_\varepsilon\ast f-f\|_{L^p(\R^N)}}{\varepsilon^s}. \label{eqLM}
\end{align}
\end{theorem}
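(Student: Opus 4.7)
The plan is to establish \eqref{eqLM} via a Paley--Littlewood argument, in which each dyadic frequency block of $f$ is exhibited as a Fourier multiplier applied to the convolution remainder $\rho_\varepsilon\ast f-f$ at a matching scale. I would fix a radial Schwartz partition of unity $1=\sum_{j\geq 0}\hat{\varphi}_j$ with $\hat{\varphi}_0$ supported in $\{|\xi|\leq 2\}$ and $\hat{\varphi}_j$ supported in $\{2^{j-1}\leq|\xi|\leq 2^{j+1}\}$ for $j\geq 1$, and invoke the standard discrete characterization
\[
\|f\|_{B_{p,\infty}^s(\R^N)}\sim\|\varphi_0\ast f\|_{L^p}+\sup_{j\geq 1}2^{js}\|\varphi_j\ast f\|_{L^p}.
\]
Since $\|\varphi_0\ast f\|_{L^p}\lesssim\|f\|_{L^p}$, the task reduces to bounding $2^{js}\|\varphi_j\ast f\|_{L^p}$, uniformly in $j\geq 1$, by the supremum on the right-hand side of \eqref{eqLM}.

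For $j\geq 1$, set $\varepsilon_j:=2^{-j}$. On the support of $\hat{\varphi}_j$ we have $|\varepsilon_j\xi|\in[1/2,2]$, and I would write
\[
\hat{\varphi}_j(\xi)=M_j(\xi)\bigl(1-\hat{\rho}(\varepsilon_j\xi)\bigr),\qquad M_j(\xi):=\frac{\hat{\varphi}_j(\xi)}{1-\hat{\rho}(\varepsilon_j\xi)},
\]
extending $M_j$ smoothly to $\R^N$ outside the support of $\hat{\varphi}_j$. Since $\widehat{\rho_{\varepsilon_j}\ast f-f}(\xi)=(\hat{\rho}(\varepsilon_j\xi)-1)\hat{f}(\xi)$, this rewrites as the operator identity $\varphi_j\ast f=-m_j\ast(\rho_{\varepsilon_j}\ast f-f)$ with $m_j:=\mathcal{F}^{-1}(M_j)$. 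Once the operators $g\mapsto m_j\ast g$ are shown to be uniformly bounded on $L^p$, one concludes
\[
2^{js}\|\varphi_j\ast f\|_{L^p}\lesssim\varepsilon_j^{-s}\|\rho_{\varepsilon_j}\ast f-f\|_{L^p}\leq\sup_{\varepsilon\in(0,1)}\frac{\|\rho_\varepsilon\ast f-f\|_{L^p}}{\varepsilon^s},
\]
and taking the supremum over $j$ finishes the proof.

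The main obstacle is precisely the uniform $L^p$-boundedness of the multipliers $M_j$, which splits into two sub-issues. First, the denominator $1-\hat{\rho}(\varepsilon_j\xi)$ must remain bounded below on the dyadic annulus $\{1/2\leq|\varepsilon_j\xi|\leq 2\}$; the hypotheses $\rho\geq 0$ and $\int\rho=1$ force $|\hat{\rho}|\leq 1$ with equality only at the origin (outside lattice-like pathologies), which suffices away from such degeneracies. Second, $M_j$ must satisfy Mihlin--H\"ormander-type symbol estimates uniformly in $j$; since derivative bounds on $\hat{\rho}$ demand moment assumptions on $\rho$ that are absent from the hypotheses, a preliminary regularization step (convolving $\rho$ with a Schwartz bump, verifying the estimate for the regularized version, and passing to the limit) appears unavoidable. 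A final subtlety is the behavior at the endpoint, where Schwartz functions are not dense in $B_{p,\infty}^s(\R^N)$; the multiplier bound must then be interpreted through the duality pairing $\mathscr{S}',\mathscr{S}$ rather than via approximation by smooth functions. Filling in these technical points is exactly what the authors promise to do in the appendix.
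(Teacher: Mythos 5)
Your skeleton coincides with the paper's sketch: both pass to the Littlewood--Paley characterization $\|f\|_{B_{p,\infty}^s(\R^N)}\sim\|\varphi_0\ast f\|_{L^p}+\sup_{j\geq1}2^{js}\|\varphi_j\ast f\|_{L^p}$, discretize the supremum over $\eps$, and reduce everything to the blockwise estimate $\|\varphi_j\ast f\|_{L^p(\R^N)}\lesssim\|\rho_{\eps_j}\ast f-f\|_{L^p(\R^N)}$ with $\eps_j\sim 2^{-j}$. The appendix deliberately outsources that blockwise estimate to \cite{LaMi}; you instead try to prove it by inverting the symbol, and that is where your argument breaks down.

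Two remarks on your "sub-issues". The first one is actually unproblematic: for $\rho\in L^1(\R^N)$ with $\rho\geq0$ and $\int\rho=1$ one has $\mathrm{Re}\,(1-\hat{\rho}(\xi))=\int\rho(x)(1-\cos(x\cdot\xi))\,\mathrm{d}x>0$ for every $\xi\neq0$, since equality would force $\rho$ to be concentrated on a Lebesgue-null union of hyperplanes; compactness of the annulus then gives the uniform lower bound, with no lattice-like exceptions. The second sub-issue is the genuine gap, and your proposed repair does not work. Convolving $\rho$ with a Schwartz bump $\chi$ replaces the symbol $\hat{\rho}$ by $\hat{\rho}\,\hat{\chi}$, which is exactly as non-differentiable as $\hat{\rho}$: smoothness of $\hat{\rho}$ is governed by the decay (moments) of $\rho$, and convolution in the $x$-variable creates no moments, so the Mihlin--H\"ormander bounds on $M_j$ are not restored. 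To smooth the symbol you would have to localize $\rho$ in space, say $\tilde{\rho}:=c_R\,\rho\,\mathds{1}_{B_R}$; but then $\|f\ast(\rho_\eps-\tilde{\rho}_\eps)\|_{L^p}\leq\|\rho_\eps-\tilde\rho_\eps\|_{L^1}\|f\|_{L^p}$ is of size $\|f\|_{L^p}$, not $O(\eps^s)$, so after dividing by $\eps^s$ the error blows up as $\eps\downarrow0$ and the limiting argument fails (note also that $\|f-f\ast\rho_\eps\|_{L^p}$, unlike the double-integral quantities elsewhere in the paper, is not monotone in $\rho$, so no comparison argument applies). This is not a removable technicality: proving the blockwise estimate for a bare $L^1$ kernel \emph{without} symbol regularity is precisely the content of \cite{LaMi}, whose argument avoids the multiplier-inversion route altogether. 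Your scheme does go through verbatim for $p=2$, where Plancherel converts the pointwise lower bound on $|1-\hat{\rho}(\eps_j\xi)|$ into the operator bound; for $p\neq2$ you must either assume moments on $\rho$ (which the theorem is designed to avoid) or import the actual lemma of \cite{LaMi}. The closing point about non-density of $\mathscr{S}(\R^N)$ in $B_{p,\infty}^s(\R^N)$ is a red herring: $f\in L^p(\R^N)$ and all the operations involved are convolutions, so nothing needs to be reinterpreted by duality.
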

Since Theorem \ref{lami} is not properly stated in \cite{LaMi} nor its proof, we shall give a brief sketch of the proof in order to justify that their result indeed applies to the scale $B_{p,\infty}^s(\R^N)$.
\begin{proof}[Sketch of the proof]
It is well-known that each tempered distribution $f\in\mathscr{S}'(\R^N)$ can be decomposed as
\begin{align}
f=\sum_{j\geq0}f_j, \label{LP}
\end{align}
where $f_0=f\ast\zeta$, $~f_j=f\ast\varphi_{2^{1-j}}$, $j\geq1$, and $\zeta,\varphi\in\mathscr{S}(\R^N)$ are functions satisfying
\begin{enumerate}
\item[(i)] $\mathrm{supp}(\hat{\zeta})\subset B_2$ and $\hat{\zeta}\equiv1$ in a neighborhood of $\bar{B}_1$,
\item[(ii)] $\varphi:=\zeta_{1/2}-\zeta$ with $\hat{\varphi}=\hat{\zeta}(\cdot/2)-\hat{\zeta}$ and $\mathrm{supp}(\hat{\varphi})\subset B_4\setminus\bar{B}_1$.
\end{enumerate}
where the subscript $\varphi_k$ means $k^{-N}\varphi(\cdot/k)$ and $\hat{\varphi}$ stands for the Fourier transform of $\varphi$ (similarly for $\zeta$). Formula \eqref{LP} is called the Littlewood-Paley decomposition of $f$. Furthermore, it is known that each function in $B_{p,\infty}^s(\R^N)$ is a tempered distribution, so that this decomposition makes sense here and may even serve to formulate an equivalent norm on this space via the formula
\[\|f\|_{B_{p,\infty}^s(\R^N)}\sim \sup_{j\geq0}2^{js}\|f_j\|_{L^p(\R^N)}.\]
To see that Theorem \ref{lami} holds it suffices to discretize the last term on the right-hand side of \eqref{eqLM} as
\[\sup_{\eps\in(1/2,1)}\sup_{j\geq0}2^{js}\|f-f\ast\rho_{2^{-j}\eps}\|_{L^p(\R^N)}.\]
At this stage, all the estimates obtained in \cite{LaMi} directly apply because it is the terms $\|f_j\|_{L^p(\R^N)}$ which are estimated there (and not their sum nor their integral) in terms of the quantity $\|f-f\ast\rho_{2^{-j}\eps}\|_{L^p(\R^N)}$.
\end{proof}
Using this result, Proposition \ref{nonextension} can be proved by arguing as follows.
\begin{proof}[Proof of Proposition \ref{nonextension}]
Suppose without loss of generality that the $\rho_\eps$'s are compactly supported and that $\mathrm{supp}(\rho)\subset B_{1}$. Also, up to replace $\rho_\eps$ by $\frac{\rho_\eps(h)+\rho_\eps(-h)}{2}$, we can always assume that each $\rho_\eps$ is even. Then, by the Jensen inequality,
\begin{align}
\frac{\|\rho_\varepsilon\ast f-f\|_{L^p(\R^N)}^p}{\varepsilon^{sp}}&=\frac{1}{\varepsilon^{sp}}\int_{\R^N}\left|\int_{\R^N}\rho_\varepsilon(h)[f(x-h)-f(x)]\mathrm{d}h\right|^p\mathrm{d}x \nonumber \\
&\leq \int_{\R^N}\int_{B_{\eps}}\rho_\varepsilon(-h)\frac{|f(x+h)-f(x)|^p}{\varepsilon^{sp}}\mathrm{d}h\mathrm{d}x \nonumber \\
&\leq \int_{\R^N}\int_{B_{\eps}}\rho_\varepsilon(h)\frac{|f(x+h)-f(x)|^p}{|h|^{sp}}\mathrm{d}h\mathrm{d}x. \nonumber
\end{align}
Whence,
\[\sup_{\varepsilon\in(0,1)}\frac{\|\rho_\varepsilon\ast f-f\|_{L^p(\R^N)}^p}{\varepsilon^{sp}}\lesssim \sup_{\varepsilon\in(0,1)}\int_{\R^N}\int_{\R^N}\rho_\varepsilon(h)\frac{|f(x+h)-f(x)|^p}{|h|^{sp}}\mathrm{d}h\mathrm{d}x.\]
And so, by Theorem \ref{lami}, $f\in B_{p,\infty}^s(\R^N)$. The proof when $\rho$ is not compactly supported follows by a simple comparison argument: cutting off $\rho$ as $\tilde{\rho}:=\rho\mathds{1}_{B_R}$ for some $R>0$ with $|B_R\cap\mathrm{supp}(\rho)|>0$, we clearly have $\rho\geq\tilde{\rho}$ and \eqref{kpp} implies that the same property holds for $\tilde{\rho}$ instead of $\rho$ (up to some multiplicative factor $\|\rho\|_{L^1(B_R)}$ to make $\tilde{\rho}_\eps$ a sequence of mollifiers), i.e. that $f\in B_{p,\infty}^s(\R^N)$.
\end{proof}


\section*{Acknowledgments} The author would like to express his gratitude to \emph{J\'er\^ome Coville} for suggesting him the problem and for careful reading of the manuscript. The author is grateful to \emph{Fran\c{c}ois Hamel} who made valuable comments, and to \emph{Petru Mironescu} whose lessons have been an inspiration for this work. The author warmly thanks the anonymous referees whose insightful comments helped improve and clarify this manuscript. This project has been supported by the French National Research Agency (ANR) in the framework of the ANR NONLOCAL project (ANR-14-CE25-0013).

\vspace{2mm}


\begin{thebibliography}{12}
\setlinespacing{0.95}
\frenchspacing

\bibitem{NDF} {\sc F. Andreu-Vaillo, J. M. Mazon, J. D. Rossi, J. J. Toledo-Melero}:
Nonlocal Diffusion Problems.
{\em AMS Mathematical Surveys and Monographs}, 165 (2010).

\bibitem{Aubert} {\sc G. Aubert, P. Kornprobst}:
Can the nonlocal characterization of Sobolev spaces by Bourgain et al. be useful to solve variational problems?
{\em SIAM Journal on Numerical Analysis}, 47(2) (2009), 844-860.

\bibitem{BCV} {\sc H. Berestycki, J. Coville, H.-H. Vo}:
Persistence criteria for populations with non-local dispersion.
{\em Journal of Mathematical Biology}, 72(7) (2016), 1693-1745.

\bibitem{Bojar} {\sc B. Bojarski, L. Ihnatsyeva, J. Kinnunen}:
How to recognize polynomials in higher order Sobolev spaces.
{\em Math. Scand.}, 112(2) (2013), 161-181.

\bibitem{BBM} {\sc J. Bourgain, H. Brezis, P. Mironescu}:
Another look at Sobolev spaces.
{\em Optimal Control and Partial Differential Equations} (J.L. Menaldi, E. Rofman and A. Sulem, eds.) A volume in honour of A. Bensoussan's 60th birthday, IOS Press, (2001), 439-455.

\bibitem{Borghol} {\sc R. Borghol}:
Some properties of Sobolev spaces.
{\em Asymptotic Analysis}, 51(3) (2007), 303-318.

\bibitem{Brezis} {\sc H. Brezis}:
How to recognize constant functions. Connections with Sobolev spaces.
{\em Uspekhi Mat. Nauk.} A volume in honour of M. Visik, 57 (2002), 59-74.

\bibitem{Bruckner} {\sc A. M. Bruckner, E. Ostrow}:
Some function classes related to the class of convex functions.
{\em Pacific J. Math.}, 12(4) (1962), 1203-1215.


\bibitem{Chiron} {\sc D. Chiron}:
On the definitions of Sobolev and BV spaces into singular spaces and the trace problem.
{\em Commun. Contemp. Math.}, 7(4) (2007), 473-513.

\bibitem{Davila} {\sc J. D\'avila}:
On an open question about functions of bounded variations.
{\em Calc. Var. Partial Differential Equations}, 15 (2002), 519-527.

\bibitem{Ferreira} {\sc R. Ferreira, C. Kreisbeck, A. M. Ribeiro}:
Characterization of polynomials and higher-order Sobolev spaces in terms of functionals involving difference quotients.
{\em Nonlinear Analysis: Theory, Methods and Applications}, 112 (2015), 199-214.

\bibitem{FSV} {\sc A. Fiscella, R. Servadei, E. Valdinoci}:
Density properties for fractional Sobolev spaces.
{\em Annales Academi{\ae} Scientiarum Fennic{\ae} Mathematica}, 40 (2015), 235-253.

\bibitem{Karadzhov} {\sc G.E. Karadzhov, M. Milman, J. Xiao}:
Limits of higher-order Besov spaces and sharp reiteration theorems.
{\em Journal of Functional Analysis}, 221 (2005), 323-339.

\bibitem{Kolyada} {\sc V. I. Kolyada, A. K. Lerner}:
On limiting embeddings of Besov spaces.
{\em Studia Mathematica}, 171(1) (2005), 1-13.

\bibitem{Kowalski} {\sc J. K. Kowalski}:
A method of approximation of Besov spaces.
{\em Studia Mathematica}, 96(2) (1990), 183-193.

\bibitem{Kuczma} {\sc M. Kuczma}:
An Introduction to the Theory of Functional Equations and Inequalities. Cauchy's Equation and Jensen's Inequality.
{\em Edited by A. Gilányi, Birkh\"auser}, (2000).


\bibitem{LaMi} {\sc X. Lamy, P. Mironescu}:
Characterization of function spaces via low regularity mollifiers.
{\em Discrete Contin. Dyn. Syst.}, 35(12) (2015), 6015-6030.

\bibitem{Mazya} {\sc V. Maz'ya, T. Shaposhnikova}:
On the Bourgain, Brezis and Mironescu theorem concerning limiting embeddings of fractional Sobolev spaces.
{\em J. Funct. Anal.}, 195 (2002), 230-238.

\bibitem{Nikolskii} {\sc S. M. Nikol'skii}:
Approximation of Functions of Several Variables and Embedding Theorems.
{\em Springer-Verlag (New York-Heidelberg)}, (1975).

\bibitem{Osher1} {\sc G. Gilboa, S. Osher}:
Nonlocal linear image regularization and supervised segmentation.
{\em Multiscale Model. Simul.}, 6(2) (2007), 595-630.

\bibitem{Osher2} {\sc G. Gilboa, S. Osher}:
Nonlocal operators with applications to image processing.
{\em Multiscale Model. Simul.}, 7(3) (2008), 1005-1028.

\bibitem{Ponce} {\sc A. Ponce}:
An estimate in the spirit of Poincar\'e's inequality.
{\em Journal of European Mathematic Society manuscriptic}, 6 (2004), 1-15.

\bibitem{Ponce2} {\sc A. Ponce}:
A new approach to Sobolev spaces and connections to Gamma-convergence.
{\em Calculus of Variations and Partial Differential Equations}, 19(3) (2004), 229-255.

\bibitem{Simon} {\sc J. Simon}:
Sobolev, Besov and Nikolskii fractional spaces: Imbeddings and Comparisons for Vector Valued Spaces on an Interval.
{\em Annali di Mathematica pura ed applicata}, 157(1) (1990), 117-148.

\bibitem{Spector} {\sc D. Spector}:
Characterization of Sobolev and BV Spaces.
{\em Dissertation.}, 78 (2011).

\bibitem{Stasyuk} {\sc S. A. Stasyuk, S. Y. Yanchenko}:
Approximation of functions from Nikol'skii-Besov type.
{\em Analysis Mathematica}, 41 (2015), 311-334.

\bibitem{Trageser} {\sc J. Trageser}:
Local and nonlocal models in Thin-Plate and Bridge Dynamics.
{\em Dissertation, Theses and Student Research Paper in Mathematics}, 64 (2015).

\bibitem{Triebel} {\sc H. Triebel}:
Theory of Function Spaces.
{\em Birkh\"auser, Monographs in Mathematics}, 78 (1983).

\bibitem{TriebelFractal} {\sc H. Triebel}:
Fractals and Spectra Related to Fourier Analysis and Function Spaces.
{\em Birkhauser, Monographs in mathematics}, 91 (1997).

\bibitem{TriebelI} {\sc H. Triebel}:
Interpolation theory, Function Spaces, Differential Operators.
{\em  Wiley-VCH; 2nd Revised and Enlarged Edition edition}, (1998).

\bibitem{Triebel3} {\sc H. Triebel}:
The Structure of Functions.
{\em Birkh\"auser, Monographs in mathematics}, 97 (2001).

\bibitem{Triebel2} {\sc H. Triebel}:
Limits of Besov norms.
{\em Arch. Math.}, 96 (2011), 169-175.




\end{thebibliography}
\end{document}